\newtheorem{theorem}{Theorem}
\newtheorem{definition}{Definition}
\newtheorem{remark}{Remark}
\newtheorem{lemma}{Lemma}
\newtheorem{proposition}{Proposition}
\newtheorem{corollary}{Corollary}
\newtheorem*{acknowledgements}{Acknowledgement}
\numberwithin{equation}{section}
\newcommand{\divx}{\mathop{\mathrm{div}}}
\newcommand{\sgn}{\mathop{\mathrm{sgn}}}
\newcommand{\argmin}{\mathop{\mathrm{arg~min}}}
\newcommand{\const}{\mathop{\mathrm{const.}}}
\newcommand{\esssup}{\mathop{\mathrm{ess~sup}}}
\title{Local Lipschitz bounds for solutions to certain singular elliptic equations involving one-Laplacian}
\author{Shuntaro Tsubouchi\footnote{Graduate School of Mathematical Sciences, The University of Tokyo, Japan. \textit{Email}: \texttt{tsubos@ms.u-tokyo.ac.jp}}}
\date{}
\begin{document}
\maketitle
\begin{abstract}
In this paper local Lipschitz regularity of weak solutions to certain singular elliptic equations involving one-Laplacian is studied. Equations treated here also contains another well-behaving elliptic operator such as \(p\)-Laplacian with \(1<p<\infty\). The problem is that one-Laplacian is too singular on degenerate points, what is often called facet, which makes it difficult to obtain even Lipschitz regularity of weak solutions. This difficulty is overcome by making suitable approximation schemes, and by avoiding analysis on facet for approximated solutions. The key estimate is a local a priori uniform Lipschitz estimate for classical solutions to regularized equations, which is proved by Moser's iteration. Another local a priori uniform Lipschitz bounds can also be obtained by De Giorgi's truncation. Proofs of local Lipschitz estimates in this paper are rather classical and elementary in the sense that nonlinear potential estimates are not used at all.
\end{abstract}
\bigbreak
\textbf{Mathematics Subject Classification (2020)} 35B65, 35A15, 35J92
\bigbreak
\textbf{Keywords} one-Laplacian, \(p\)-Laplacian, local Lipschitz regularity

\section{Introduction and main theorem}\label{Sect Intro}
Let \(\Omega\subset{\mathbb R}^{n}\) be a bounded Lipschitz domain in \(n\)-dimensional Euclidian space, and let \(f\) be a real-valued function on \(\Omega\). We fix constants \(1<p<\infty,\,0<\beta<\infty\). The aim of this paper is to obtain local Lipschitz regularities for solutions to 
\begin{equation}\label{crystal model eq}
-\beta\divx\left(\nabla u/\lvert\nabla u\rvert\right)-\divx\left(\lvert\nabla u \rvert^{p-2}\nabla u\right)\ni f\quad \textrm{ in } \Omega,
\end{equation}
or often simply denoted by \(-\beta\Delta_{1}u-\Delta_{p}u\ni f\) in \(\Omega\). More generally, we consider equations
\begin{equation}\label{1+p Laplacian}
-\beta \divx (\nabla u/\lvert \nabla u\rvert)-\divx\nabla_{z} E_{p}(\nabla u)\ni f\quad \text{\rm in } \Omega,
\end{equation}
where \(E_{p}\) is a real-valued function in \({\mathbb R}^{n}\), such as \(\lvert z\rvert^{p}/p\,\left(z\in{\mathbb R}^{n}\right)\).

\subsection{A typical example and our result}\label{Subsect typical model}
Consider (\ref{crystal model eq}) with \(f=0\). This equation derives from a minimizing problem of the energy functional \[G(u)\coloneqq \beta\int_{\Omega}\lvert\nabla u\rvert\,dx+\frac{1}{p}\int_{\Omega}\lvert\nabla u\rvert^{p}\,dx.\] \(G\) appears as a crystal surface energy, especially for the case \(p=3\). The nonhomogeneous term \(f\) can be regarded here as chemical potential for the crystal surface energy \(G\), in the sense that
\[f=\frac{\delta G}{\delta u}=-\beta\divx\left(\nabla u/\lvert\nabla u\rvert\right)-\divx\left(\lvert\nabla u \rvert^{p-2}\nabla u\right).\]
For details of justifications, see \cite{MR2772127}, \cite{spohn1993surface} and the references given there.
Also in general, equation (\ref{crystal model eq}) comes from a minimizing problem of the energy functional
\[F(u)\coloneqq G(u)-\int_{\Omega}fu\,dx.\]

It is well-known that the diffusion singularity of the operator one-Laplacian, denoted by \(\Delta_{1}\), appears strongly on degenerate points \(\{\nabla u=0\}\), or often called facet. This singularity makes it difficult to consider a term \(\nabla u/\lvert\nabla u\rvert\) in classical sense over facet. Therefore in the first place, when we consider weak solutions (that is, solutions in distributional sense) to (\ref{crystal model eq}),  we face to give a definition of the term \(\nabla u/\lvert\nabla u\rvert\), which should be mathematically valid. The definition of weak solutions is given later in Section \ref{Sect weak solution}.

Also, when it comes to smoothness of solutions, the problem is that elliptic regularity properties of \(\Delta_{1}\) are not understood so much. It is remarkable that diffusion effect of \(\Delta_{1}u=\divx (\nabla u/\lvert\nabla u\rvert)\), unlike that of the singular elliptic operator \(\Delta_{p}u=\divx\left(\lvert\nabla u\rvert^{p-2}\nabla u\right)\,(1<p<2)\), degenerates in the direction normal to levelset surface \cite{MR2376662}.
The diffusion singularity of \(\Delta_{1}\) on facet is connected with the fact that, unlike \(\lvert z\rvert^{p}/p\,\left(z\in{\mathbb R}^{n}\right)\) for some fixed \(1<p<\infty\), the functional \(\lvert z\rvert\,\left(z\in{\mathbb R}^{n}\right)\) loses differentiability at \(0\in{\mathbb R}^{n}\) \cite{MR1712447}. These facts give us the difference between one-Laplacian \(\Delta_{1}\) and \(p\)-Laplacian \(\Delta_{p}\,(1<p<\infty)\) on elliptic regularity. Theorem \ref{Lipschitz bound for Crystal Eq} below reveals that, at least for local Lipschitz regularity, \(p\)-Laplacian plays a dominant role.

\begin{theorem}\label{Lipschitz bound for Crystal Eq}
Let \(u\) be a solution to (\ref{crystal model eq}) in weak sense. Then we have 
\[\lVert\nabla u\rVert_{L^{\infty}(B_{\theta R})}\le C(n,\,p,\,q,\,\beta,\,\theta)\left(1+\lVert f\rVert_{L^{q}(B_{R})}^{1/(p-1)}+R^{-n/p}\lVert\nabla u\rVert_{L^{p}(B_{R})}\right)\]
for any fixed closed ball \(B_{R}\subset \Omega\) with its radius \(0<R\le 1\), any \(2\le n<q\le\infty\) and \(0<\theta<1\).
\end{theorem}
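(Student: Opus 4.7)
The plan is to deduce Theorem \ref{Lipschitz bound for Crystal Eq} from an $\epsilon$-independent a priori Lipschitz estimate for smooth solutions of a non-singular approximation, as indicated in the abstract. I would first regularize the offending term by replacing $\nabla u_\epsilon/\lvert\nabla u_\epsilon\rvert$ with $\nabla u_\epsilon/\sqrt{\lvert\nabla u_\epsilon\rvert^2+\epsilon^2}$ and, if helpful, perturb $E_p$ slightly so that the combined operator is smooth and uniformly strictly convex away from $\epsilon=0$. On a fixed ball $B_R\Subset\Omega$, I would solve the Dirichlet problem for the approximate equation with boundary data matching $u$, obtaining smooth solutions $u_\epsilon$ by standard theory. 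A comparison with $u$ through the regularized energy yields an $\epsilon$-uniform control of $\lVert\nabla u_\epsilon\rVert_{L^p(B_R)}$ by $\lVert\nabla u\rVert_{L^p(B_R)}$ plus terms involving $f$ and $R$, which will feed the iteration base case.

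The core of the argument is a Caccioppoli inequality whose constants do not depend on $\epsilon$. Differentiating the regularized equation in $x_k$ gives
\[-\partial_i\bigl(A^\epsilon_{ij}(\nabla u_\epsilon)\,\partial_j\partial_k u_\epsilon\bigr)=\partial_k f,\qquad A^\epsilon=\beta A^{(1),\epsilon}+\nabla^2 E_p(\nabla u_\epsilon),\]
where
\[A^{(1),\epsilon}_{ij}=\frac{\delta_{ij}}{\sqrt{\lvert\nabla u_\epsilon\rvert^2+\epsilon^2}}-\frac{\partial_i u_\epsilon\,\partial_j u_\epsilon}{(\lvert\nabla u_\epsilon\rvert^2+\epsilon^2)^{3/2}}.\]
Testing against $\eta^2\,\partial_k u_\epsilon\,G(\lvert\nabla u_\epsilon\rvert^2)$ with $\eta$ a cutoff and $G$ an increasing power, then summing over $k$, the quadratic form $\sum_{i,j,k}A^{(1),\epsilon}_{ij}\partial_i\partial_k u_\epsilon\,\partial_j\partial_k u_\epsilon$ is pointwise non-negative by Cauchy--Schwarz (each summand equals $\lvert\nabla\partial_k u_\epsilon\rvert^2/\sqrt{\lvert\nabla u_\epsilon\rvert^2+\epsilon^2}$ minus a quantity it dominates), and the analogous $G'$ term is non-negative for the same reason. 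Consequently the one-Laplacian part contributes only non-negative terms on the left-hand side, which may simply be discarded. What remains is the $p$-Laplacian quadratic form giving the usual density $\lvert\nabla u_\epsilon\rvert^{p-2}\lvert D^2u_\epsilon\rvert^2$ up to a weight, and absorbing the cutoff cross term via Young's inequality produces a Caccioppoli estimate suitable for iteration.

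Moser iteration applied to $V_\epsilon=(1+\lvert\nabla u_\epsilon\rvert^2)^{1/2}$, using Sobolev embedding together with the assumption $q>n$ to handle the forcing with the scaling $\lVert f\rVert_{L^q}^{1/(p-1)}$, then delivers a bound of the stated form for $u_\epsilon$ with constants depending only on $n,p,q,\beta,\theta$. The $\epsilon$-uniform gradient bound combined with the energy control yields local Lipschitz estimates independent of $\epsilon$, so a subsequence of $u_\epsilon$ converges locally uniformly to a limit, which a variational comparison identifies as the original weak solution $u$; the Lipschitz bound is preserved under weak-$\ast$ convergence in $L^\infty$ of gradients, yielding the theorem. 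The main obstacle is precisely that $A^{(1),\epsilon}$ becomes arbitrarily degenerate as $\lvert\nabla u_\epsilon\rvert\to\infty$ or $\epsilon\to 0$, so it cannot be used as a coercive coefficient in the iteration; the algebraic observation that its contribution to the Caccioppoli form is merely non-negative, rather than something that must be controlled, is what makes the scheme close, and ensuring no $\epsilon$-dependence sneaks back in through the test function or cutoff manipulations is the delicate part.
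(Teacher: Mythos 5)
Your overall pipeline (regularize, solve a local Dirichlet problem, prove an $\epsilon$-uniform Caccioppoli/Moser estimate, pass to the limit) coincides with the paper's strategy, but the Caccioppoli step as you describe it has a genuine gap for $p>2$, and the exponent $1/(p-1)$ on $\lVert f\rVert_{L^q}$ does not come out of what you wrote.

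Your test function $\eta^2\,\partial_k u_\epsilon\,G(\lvert\nabla u_\epsilon\rvert^2)$ has no support restriction, so the Caccioppoli inequality must hold with the weight $(\epsilon^2+\lvert\nabla u_\epsilon\rvert^2)^{p/2-1}$ coming from the lower ellipticity bound (\ref{elliptic condition for relaxed p-th growth term}). You are right that the one-Laplacian quadratic forms contribute nonnegative terms and that the cross term can be absorbed back into them via Cauchy--Schwarz in the bilinear form $\nabla_z^2\Psi^\epsilon$. The problem arises on the complement of any superlevel set of $\lvert\nabla u_\epsilon\rvert$: for $p>2$ the $p$-Laplacian coercivity weight degenerates there, $(\epsilon^2+\lvert\nabla u_\epsilon\rvert^2)^{p/2-1}\to 0$ as $\epsilon\to 0$ and $\nabla u_\epsilon\to 0$, while your iteration scalar $V_\epsilon=(1+\lvert\nabla u_\epsilon\rvert^2)^{1/2}$ and the weight $G(\lvert\nabla u_\epsilon\rvert^2)$ stay of order one. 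Concretely, when you try to absorb $\int f\,\partial_k(\eta^2\partial_k u_\epsilon G)$ by Young against the good term $\int\eta^2 G\,(\epsilon^2+\lvert\nabla u_\epsilon\rvert^2)^{p/2-1}\sum_k\lvert\nabla\partial_k u_\epsilon\rvert^2$, the conjugate piece carries the factor $\lvert f\rvert^2(\epsilon^2+\lvert\nabla u_\epsilon\rvert^2)^{1-p/2}$, which is $\approx\lvert f\rvert^2\,\epsilon^{2-p}$ near the facet $\{\nabla u_\epsilon=0\}$ and hence not uniformly integrable in $\epsilon$. The same mismatch prevents passing from the coercive term with weight $(\epsilon^2+\lvert\nabla u_\epsilon\rvert^2)^{p/2-1}$ to $\int\lvert\nabla(\eta V_\epsilon^{(\alpha+p)/2})\rvert^2$, because $(1+\lvert\nabla u_\epsilon\rvert^2)^{p/2-2}(\epsilon^2+\lvert\nabla u_\epsilon\rvert^2)^{1-p/2}$ blows up like $\epsilon^{2-p}$ near the facet for $p>2$. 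Thus the Caccioppoli constant picks up a hidden $\epsilon$-dependence and the iteration does not close uniformly.

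The paper's fix, which is the key technical idea you are missing, is to build the test functions from the truncated quantities $u_{i,k}:=-(\partial_{x_i}u^\epsilon+k)_-+(\partial_{x_i}u^\epsilon-k)_+$ and $w_k:=k^2+\sum_i u_{i,k}^2$ with $k\ge 1$, so that $\nabla\phi$ is supported where $\lvert\nabla u^\epsilon\rvert>k\ge 1$; there, (\ref{wk versus we}) gives $\epsilon^2+\lvert\nabla u^\epsilon\rvert^2\approx k^2+\lvert\nabla u^\epsilon\rvert^2$, so the $\epsilon$-dependent weight is replaced by the $\epsilon$-independent $w_k^{p/2-1}$, which is bounded below by $k^{p-2}$. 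This is also why the conditions (\ref{elliptic condition})--(\ref{bound condition}) on $E^\epsilon$ are only imposed for $\lvert z_0\rvert\ge 1$, which is exactly what the truncation buys you. Moreover, choosing $k=\lVert f\rVert_{L^q(B_R)}^{1/(p-1)}+1$ makes $f_k=\lvert f\rvert^2 w_k^{1-p}$ satisfy $\lVert f_k\rVert_{L^{q/2}}\le 1$ and is what produces the exponent $1/(p-1)$ and the additive form of the final bound; with your $k=1$, the Caccioppoli constant would involve $\lVert f\rVert_{L^q}^2$ directly, and the iteration would yield a multiplicative bound with the wrong power of $\lVert f\rVert_{L^q}$. (For $1<p<2$ your approach is closer to working because the weight is then singular rather than degenerate near the facet, but the $f$-scaling issue remains, and the paper treats all $1<p<\infty$ uniformly through the truncation.) Finally, your closing step of passing the $L^\infty$ bound to the limit matches Lemma \ref{weak+uniform bound}, and the density step from $f\in C^\infty$ to $f\in L^q$ is also needed; those portions of your sketch are fine.
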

This type of gradient bound estimate has already been given in \cite{krugel2013variational} and \cite{xu2019mathematical}, where nonhomogeneous terms are controlled by \(L^{\infty}\)-data.
The novelty of Theorem \ref{Lipschitz bound for Crystal Eq} is that the nonhomogeneous term \(f\) is controlled by an \(L^{q}\)-datum with \(n<q\le\infty\). We also note that local Lipschitz estimate is valid for any \(n\ge 2\) and \(1<p<\infty\), whereas the two previous works need to restrict conditions on \(n\) and \(p\). 

After this work was completed, we were informed of a recent excellent work of Beck and Mingione \cite{beck2020lipschitz}. In their paper, they established general theorems on local Lipschitz regularity, especially for solutions to nonuniformly elliptic equations. From some of their estimates \cite[Theorem 1.9 and 1.11]{beck2020lipschitz}, we are able to obtain a more sophisticated estimate than that of Theorem \ref{Lipschitz bound for Crystal Eq}. Although our basic strategy written in Section \ref{Subsect strategy} below seems to be similar to theirs, our individual methods are rather classical, direct and elementary.
Moreover, the details are quite different from theirs.
For more detailed comparison, see Section \ref{Subsect Literature} and Remark \ref{Nonlinear Potential Result}-\ref{case beta=0} in Section \ref{Sect Uniform local Lipschitz bound}.

\subsection{Our strategy}\label{Subsect strategy}
From a viewpoint of comparing \(\Delta_{1}\) with \(\Delta_{p}\) in Section \ref{Subsect typical model}, we describe our strategy briefly.
We first get over the problem how to define the term \(\nabla u/\lvert\nabla u\rvert\) by regarding it as a subdifferential operator. Subdifferential operators often play important roles in many fields of mathematical analysis, including convex analysis \cite{MR2986672}, \cite{MR0274683} and nonlinear semigroup theory \cite{MR2582280}, \cite{MR0348562}. This type of definition has already been taken by Xu \cite{xu2019mathematical} under the Neumann boundary condition.
Our strategy for Theorem \ref{Lipschitz bound for Crystal Eq} is to make suitable approximation schemes, and to avoid analysis on facet. 
Here we illustrate our approaches for local Lipschitz regularity.

For an approximation to (\ref{crystal model eq}), we consider classical solutions to regularized equations
\begin{equation}\label{Regularized Crystal Eq}
-\beta\divx\left(\frac{\nabla u^{\epsilon}}{\sqrt{\epsilon^{2}+\left\lvert\nabla u^{\epsilon}\right\rvert^{2}}}\right)-\divx\left(\left(\epsilon^{2}+\left\lvert\nabla u^{\epsilon}\right\rvert^{2} \right)^{p/2-1}\nabla u^{\epsilon}\right)=f
\end{equation}
for \(0<\epsilon\le 1\). From \cite[Chapter IV and V]{MR0244627}, if \(f\in C^{\infty}(\Omega)\), then for each fixed \(0<\epsilon\le 1\), \(u^{\epsilon}\) admits \(C^{\infty}\)-inner regularity. The key estimate in this paper is the following local a priori estimate;
\begin{equation}\label{a priori estimate}
\sup\limits_{B_{\theta R}}\left\lvert\nabla u^{\epsilon}\right\rvert\le C(n,\,p,\,q,\,\beta,\,\theta)\left(1+\lVert f\rVert_{L^{q}(B_{R})}^{1/(p-1)}+R^{-n/p}\left\lVert\nabla u^{\epsilon}\right\rVert_{L^{p}(B_{R})}\right)
\end{equation}
under the same conditions given in Theorem \ref{Lipschitz bound for Crystal Eq}. From (\ref{a priori estimate}), we first prove Theorem \ref{Lipschitz bound for Crystal Eq} for \(f\in C^{\infty}(\Omega)\). We extend our proof for general \(f\in L^{q}(\Omega)\,(n<q\le\infty)\) by density argument and the H\"{o}lder inequality. To justify this argument, we need some basic properties of solutions to (\ref{crystal model eq}), including the minimizing property of solutions and the stability estimate of solutions. Also, we should make an appropriate justification of convergence \(\nabla u^{\epsilon}\rightarrow\nabla u\) as \(\epsilon\to 0\). Arguments on convergence in the paper are essentially due to Kr\"{u}gel's idea \cite[Theorem 3.3]{krugel2013variational}. More general justification is given in Appendix for the reader's convenience.

The proof of the key estimate (\ref{a priori estimate}) is similar to that of \cite[Proposition 3.3]{MR709038}, but the significant difference is that we have to choose test functions so carefully that their support does not contain any facet of approximated solutions.
We obtain local a priori Lipschitz estimates for solutions to regularized equations in two ways. The first is by Moser's iteration and the second is by De Giorgi's truncation, both of which are used for local boundedness of weak solutions to uniformly elliptic equations. For materials on local boundedness, we refer the reader to \cite[Chapter 3.6]{MR3887613}, \cite[Chapter 8.3, 8.4]{MR3099262}, \cite[Chapter 4.2]{MR2777537} and \cite[Chapter 7.1]{MR2356201}. By testing suitable functions which are supported in a certain regular set of \(\nabla u^{\epsilon}\), we prove local boundedness of \(\nabla u^{\epsilon}\), uniformly for an approximation parameter \(0<\epsilon\le 1\).

Our approaches given above are valid even for generalized equations (\ref{1+p Laplacian}), if \(E_{p}\colon {\mathbb R}^{n}\rightarrow {\mathbb R}\) admits some reasonable properties. These will be stated in Section \ref{Subsect Main theorem} below.

It is still left open whether solutions to (\ref{crystal model eq}) are always \(C^{1,\,\alpha}\) for \(f\in C^{\infty}(\Omega)\), or more generally for \(f\in L^{q}(\Omega)\,(n<q\le\infty)\). To solve this problem, we will probably need more analysis over facet.
\subsection{General result}\label{Subsect Main theorem}
Here we describe proper conditions for equations, and state our main theorem, which covers Theorem \ref{Lipschitz bound for Crystal Eq}.

For regularities, we only require \(f\in L^{q}(\Omega)\, (n< q\le \infty)\) and \(E_{p}\in C^{1}({\mathbb R}^{n})\). However, we also assume that \(E_{p}\) is strictly convex and admits a family of strictly convex functions \(\left\{E_{p}^{\epsilon}\right\}_{0<\epsilon\le 1}\subset C^{\infty}\left({\mathbb R}^{n}\right)\), and that there exists constants \(0<c_{1}\le c_{2}<\infty\), independent of \(0<\epsilon\le 1\), such that
\begin{equation}\label{elliptic p-regular}
c_{1}\lvert z_{0}\rvert^{p}\le E_{p}(z_{0})\le c_{2}\lvert z_{0}\rvert^{p},
\end{equation}
\begin{equation}\label{elliptic condition for relaxed p-th growth term}
c_{1}\left(\epsilon^{2}+\lvert z_{0}\rvert^{2}\right)^{p/2-1}\lvert\zeta\rvert^{2}\le \left\langle\nabla_{z}^{2}E^{\epsilon}(z_{0})\zeta\mathrel{}\middle|\mathrel{}\zeta\right\rangle,
\end{equation}
\begin{equation}\label{bound condition for relaxed p-th growth term}
\left\lvert\left\langle\nabla_{z}^{2}E^{\epsilon}(z_{0})\zeta\mathrel{}\middle| \mathrel{}\omega\right\rangle\right\rvert\le c_{2}\left(\epsilon^{2}+\lvert z_{0}\rvert^{2}\right)^{p/2-1}\lvert\zeta\rvert\lvert\omega\rvert,
\end{equation}
\begin{equation}\label{convergence condition of p-th growth term}
E_{p}(z_{0})\le E_{p}^{\epsilon}(z_{0}),\quad E_{p}(z_{0})=\lim\limits_{\epsilon\to 0} E_{p}^{\epsilon}(z_{0}),
\end{equation}
\begin{equation}\label{convergence condition of nabla p-th growth term}
\nabla_{z} E_{p}(z_{0})=\lim\limits_{\epsilon\to 0} \nabla_{z}E_{p}^{\epsilon}(z_{0})
\end{equation}
for all \(z_{0},\,\zeta,\omega\in {\mathbb R}^{n}\). 
Here \(\langle\,\cdot\mid\cdot\,\rangle\) denotes the canonical inner product in \({\mathbb R}^{n}\). For a sufficiently smooth functional \(E\colon {\mathbb R}^{n}\rightarrow {\mathbb R}\), we also write \(\nabla_{z} E(z_{0})\) and \(\nabla_{z}^{2}E(z_{0})\) as the gradient and the Hessian matrix at \(z_{0}\in{\mathbb R}^{n}\) in classical sense respectively.

A typical example is 
\begin{equation}\label{p-laplacian case}
E_{p}(z)\coloneqq \frac{1}{p}\lvert z\rvert^{p} \textrm{ and } E_{p}^{\epsilon}(z)\coloneqq \frac{1}{p}\left(\epsilon^{2}+\lvert z\rvert^{2}\right)^{p/2}\,(0<\epsilon\le 1).
\end{equation}
It is easy to check that they satisfy \((\ref{elliptic p-regular})\)-\((\ref{convergence condition of nabla p-th growth term})\) with \(c_{1}\coloneqq\min\{\, p-1,\,1/p\,\},\,c_{2}\coloneqq\max\{\,p-1,\, 1\,\}\). For the special case (\ref{p-laplacian case}), \((\ref{1+p Laplacian})\) becomes (\ref{crystal model eq}).

The strategy described in Section \ref{Subsect strategy} yields main theorem in the paper, which states local Lipschitz regularity of solutions to (\ref{1+p Laplacian}).
\begin{theorem}\label{Lipschitz bound for 1+p Laplacian}
Let \(u\) be a solution to (\ref{1+p Laplacian}) in weak sense. Then we have 
\begin{equation}\label{main local Lipschitz estimate}
\lVert\nabla u\rVert_{L^{\infty}(B_{\theta R})}\le C(n,\,p,\,q,\,\beta,\,c_{1},\,c_{2},\,\theta)\left(1+\lVert f\rVert_{L^{q}(B_{R})}^{1/(p-1)}+R^{-n/p}\lVert\nabla u\rVert_{L^{p}(B_{R})}\right)
\end{equation}
for any fixed closed ball \(B_{R}\subset \Omega\) with its radius \(0<R\le 1\), any \(2\le n<q\le\infty\) and \(0<\theta<1\). Here \(0<c_{1}\le c_{2}<\infty\) are constants satisfying (\ref{elliptic p-regular})-(\ref{bound condition for relaxed p-th growth term}).
\end{theorem}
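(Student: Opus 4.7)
The plan is to follow the three-stage strategy outlined in Section \ref{Subsect strategy}: regularize the equation, prove a uniform-in-\(\epsilon\) a priori Lipschitz estimate for the smooth approximants, and pass to the limit. First, I would assume \(f\in C^\infty(\Omega)\) and, for each \(\epsilon\in(0,1]\), consider the smooth classical solution \(u^\epsilon\) of the regularized equation (the analogue of \eqref{Regularized Crystal Eq} with \(E_p^\epsilon\) in place of the \(p\)-Laplacian term). Existence and interior \(C^\infty\) regularity follow from the uniformly elliptic smooth structure of the regularized operator via \cite{MR0244627}, while a minimizing comparison together with \eqref{convergence condition of p-th growth term} gives a uniform bound \(\|\nabla u^\epsilon\|_{L^p(B_R)}\le C(1+\|\nabla u\|_{L^p(B_R)})\) independent of \(\epsilon\).

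The heart of the argument is establishing the uniform a priori estimate \eqref{a priori estimate}. I would differentiate the regularized equation in direction \(x_k\) to obtain a linear elliptic equation for \(v_k:=\partial_k u^\epsilon\) with symmetric coefficient matrix
\[
A^\epsilon(\nabla u^\epsilon)=\beta\,\nabla_z^2\sqrt{\epsilon^2+|\nabla u^\epsilon|^2}+\nabla_z^2 E_p^\epsilon(\nabla u^\epsilon),
\]
and test against \(v_k\,\Phi(|\nabla u^\epsilon|^2)\,\eta^2\) with \(\eta\) a Lipschitz cutoff supported in \(B_R\) and \(\Phi\ge 0\) nondecreasing, vanishing on \([0,k^2]\) for a truncation level \(k>0\). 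Summing over \(k=1,\dots,n\) produces a Caccioppoli-type inequality whose integrand is automatically supported on the regular set \(\{|\nabla u^\epsilon|>k\}\): this is exactly the \emph{avoidance of the facet} emphasized in the paper. On this support the one-Laplacian Hessian is positive semidefinite with largest eigenvalue at most \(\beta/k\), so it can be discarded from the left-hand side (keeping the full \(p\)-Laplacian ellipticity supplied by \eqref{elliptic condition for relaxed p-th growth term}) and absorbed as a lower-order perturbation on the right-hand side via Young's inequality. The resulting reverse-H\"older inequality for \(V_\epsilon:=(\epsilon^2+|\nabla u^\epsilon|^2)^{p/4}\) is iterated by Moser's scheme over shrinking balls \(B_{R_j}\searrow B_{\theta R}\) and increasing truncation levels \(k_j\nearrow k_\infty\); the hypothesis \(q>n\) enters through H\"older's inequality together with Sobolev embedding at each iteration step, so that the \(L^q\)-norm of \(f\) enters with the exponent \(1/(p-1)\). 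This yields \eqref{a priori estimate} with constants independent of \(\epsilon\). An alternative route via De Giorgi's truncation, as remarked in Section \ref{Subsect strategy}, would give the analogous estimate.

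Finally, the stability argument of \cite[Theorem 3.3]{krugel2013variational} (detailed in the appendix) provides strong convergence \(\nabla u^\epsilon\to\nabla u\) in \(L^p_{\mathrm{loc}}\); combined with lower semicontinuity of the \(L^\infty\) norm, this transfers the uniform bound to \(\nabla u\) and yields \eqref{main local Lipschitz estimate} for smooth \(f\). For general \(f\in L^q(\Omega)\) with \(n<q\le\infty\), I would mollify \(f\) to smooth data \(f_m\to f\) in \(L^q(B_R)\), apply the preceding step to the corresponding solutions \(u_m\), and pass to the limit using the continuous dependence of weak solutions on \(f\) from the subdifferential framework of Section \ref{Sect weak solution}. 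The main obstacle is the Caccioppoli step above: the test function must be chosen so that its support avoids the facet, \emph{and} the truncation level \(k\) must be incremented in tandem with the shrinking radii so that the lower-order one-Laplacian contributions accumulated across Moser iteration steps remain summable and the final constant is \(\epsilon\)-independent.
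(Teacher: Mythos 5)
Your overall plan — regularize, prove a uniform a priori Lipschitz estimate by Moser's iteration, and pass to the limit (first for smooth $f$, then by density for general $f\in L^q$) — matches the paper's strategy exactly. However, the heart of the argument contains a genuine misconception that, as stated, would derail the proof.

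You claim the truncation level must increase ``in tandem with the shrinking radii so that the lower-order one-Laplacian contributions accumulated across Moser iteration steps remain summable.'' This is not how the paper's argument works, and the rationale is wrong: there is \emph{no} accumulation to control. The paper fixes $k:=\lVert f\rVert_{L^q(B_R)}^{1/(p-1)}+1\ge 1$ once, before iterating, and the Moser iteration runs only over exponents $\alpha$, not over truncation levels. The reason this suffices — and the essential observation you are missing — is that once $k\ge 1$, the test functions are supported in $\{|\nabla u^\epsilon|>k\}\subset\{|\nabla u^\epsilon|\ge 1\}$, and on $\{|z_0|\ge 1\}$ the full integrand $E^\epsilon=\beta\Psi^\epsilon+E_p^\epsilon$ already satisfies the two-sided ellipticity (\ref{elliptic condition})--(\ref{bound condition}) with $\epsilon$-independent constants $C_1=c_1$, $C_2=c_2+\beta$: since $p>1$ implies $p/2-1>-1/2$ and the base $\epsilon^2+|z_0|^2\ge 1$, the one-Laplacian eigenvalues $\beta(\epsilon^2+|z_0|^2)^{-1/2}$ and $\beta\epsilon^2(\epsilon^2+|z_0|^2)^{-3/2}$ are dominated by $\beta(\epsilon^2+|z_0|^2)^{p/2-1}$. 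The combined operator is then handled as a single uniformly elliptic structure, so nothing accumulates, and the iteration closes with the standard scheme. Moreover, your proposed test function $\partial_i u^\epsilon\,\Phi(|\nabla u^\epsilon|^2)\,\eta^2$ sidesteps a second subtle issue that the paper handles deliberately: the paper replaces $\partial_i u^\epsilon$ with the clipped version $u_{i,k}$ and builds the weight $w_k=k^2+\sum_i u_{i,k}^2$, proving in Lemma~\ref{Wulff potential versus parabola} and (\ref{wk versus we}) that $w_k^\sigma$ is comparable, with $\epsilon$-independent constants, to both $\hat w_k^\sigma=(k^2+|\nabla u^\epsilon|^2)^\sigma$ and $\hat w_\epsilon^\sigma=(\epsilon^2+|\nabla u^\epsilon|^2)^\sigma$ on the support. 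This compatibility is exactly what allows the negative exponent $(\alpha+p)/2-2$ to be used safely when $1<p<2$; Remark~\ref{How to modify Kruegel's proof} explains in detail why a carelessly chosen weight (such as Kr\"ugel's $w_1^\pm$) fails precisely here. Your proposal does not address this compatibility, and without it the Caccioppoli inequality (\ref{Caccioppoli-type inequality for simple case}) would not follow for $1<p<2$.
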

Clearly Theorem \ref{Lipschitz bound for 1+p Laplacian} covers Theorem \ref{Lipschitz bound for Crystal Eq}.

\subsection{Literature overview}\label{Subsect Literature}
Here we describe previous relevant researches, especially on regularities for solutions to (\ref{crystal model eq}), in short.

Elliptic regularity of \(p\)-Laplacian, especially \(C^{1,\,\alpha}\)-regularity of \(p\)-harmonic functions, has been proved by many excellent mathematicians. As a series of papers, we refer the reader to, for instance, Uhlenbeck \cite{MR474389} and Evans \cite{MR672713} for \(2\le p<\infty\) and DiBenedetto \cite{MR709038}, Tolksdorff \cite{MR727034} and Wang \cite{MR1264526} for \(1<p<\infty\). Among them the most related work is one by DiBenedetto \cite{MR709038} in 1983. There he discussed \(C^{1,\,\alpha}\)-regularity of solutions to equations, including
\[-\divx E_{p}(\nabla u)=0\quad\textrm{ in }\Omega.\]
In \cite[Proposition 3.3]{MR709038}, he showed local a priori gradient bounds for solutions to certain regularized equations \[-\divx\nabla_{z}E_{p}^{\epsilon}\left(\nabla u^{\epsilon}\right)=0,\] uniformly for \(0<\epsilon\le 1\). Our proofs of local a priori gradient bounds in Section \ref{Sect Uniform local Lipschitz bound} are essentially obtained by a modification of his arguments. The difference is that we have to make analysis only for regular points, whereas DiBenedetto did make analysis for both degenerate and regular points.

Some mathematical properties of the equation (\ref{crystal model eq}) with \(f=\const\) were discussed in Kr\"{u}gel's thesis in 2013 \cite{krugel2013variational}. 
On local Lipschitz regularity, inspired by the paper \cite{MR1925022}, Kr\"{u}gel proved a local a priori uniform Lipschitz estimate for regularized equations (\ref{Regularized Crystal Eq}) by Moser's iteration \cite{MR170091}. Despite Kr\"{u}gel's claim that the estimate is valid for any \(n\ge 2\) and \(1<p<\infty\), it seems that there need more arguments or modifications especially for \(1<p<2\) (for details, see Rematk \ref{How to modify Kruegel's proof} in Section \ref{Subsect Moser's iteration}). Also, the nonhomogeneous term \(f=\const\) is controlled by an \(L^{\infty}\)-datum in the proof. Our first proof of a local a priori Lipschitz bound (Proposition \ref{Lipschitz Bound}) is similar to \cite[Lemma 4.9]{krugel2013variational}, but our proof works for general \(1<p<\infty\) and \(n<q\le\infty\).
A justification of convergence for approximation schemes was also discussed in the thesis, the results of which are organized more generally in Appendix of this paper.

Recently in 2019, Xu \cite{xu2019mathematical} studied a homogeneous Neumann boundary value problem for a certain nonlinear fourth order equation. There he showed a local Lipschitz estimate for solutions to equations of the type
\begin{equation}\label{Xu eq}
-\beta\divx\left(\frac{\nabla u^{\epsilon}}{\sqrt{\epsilon^{2}+\left\lvert\nabla u^{\epsilon}\right\rvert^{2}}}\right)-\divx\left(\left(\epsilon^{2}+\left\lvert\nabla u^{\epsilon}\right\rvert^{2} \right)^{p/2-1}\nabla u^{\epsilon}\right)=f^{\epsilon}\quad \textrm{ with }\quad \sup\limits_{0<\epsilon\le 1}\left\lVert f^{\epsilon}\right\rVert_{L^{\infty}}<\infty
\end{equation}
by De Giorgi's levelset argument \cite{MR0093649} and analysis on regular points. From this he proved that there exists a solution to the nonlinear fourth order Neumann problem with global Lipschitz continuity under some suitable conditions.
In the proof of uniform Lipschitz bounds for solutions to (\ref{Xu eq}) by Xu, the condition \(n=2\) cannot be removed. This is basically due to the fact that his argument is an adaptation of those given in \cite[Chapter 12.2]{MR1814364}, where elliptic equations in two variables are especially treated. His proof also requires another condition \(p>4/3\) for technical reasons related to estimates for levelsets, and arguments for \(2\le p<\infty\) are almost omitted.
On local a priori Lipschitz bounds for classical solutions, our two proofs are totally different from that given by Xu \cite[Claim 4.1]{xu2019mathematical}. In the first place, the weak formulation (\ref{Euler-Lagrange 1}) in this paper is different from the one used in his paper. While most of Xu's computations are valid only for \(n=2\), our proofs of a priori estimates are valid for general \(n\ge 2\).

On local Lipschitz regularities, our proofs of local a priori estimates given in Section \ref{Sect Uniform local Lipschitz bound} are more general than those from two previous researches by Kr\"{u}gel and Xu, in the sense that our methods are valid for any \(1<p<\infty,\,n\ge 2\) and that the nonhomogeneous term \(f\) is controlled by an \(L^{q}\)-datum with \(n<q\le\infty\). This advantage directly yields our main result of local gradient bounds (Theorem \ref{Lipschitz bound for Crystal Eq}-\ref{Lipschitz bound for 1+p Laplacian}) for any \(n\ge 2,\,1<p<\infty,\,n<q\le\infty\). It is remarkable that the condition \(n<q\le\infty\) is optimal for Lipschitz regularity (see \cite[Section 3]{MR1100802}).

As mentioned in Section \ref{Sect Intro}, a recent paper \cite{beck2020lipschitz} gives us more general results on local Lipschitz regularity for minimizers of variational integrals, especially nonuniformly elliptic ones. These Lipschitz bounds are proved by sophisticated estimates from nonlinear potential theory.
Remarkably, in \cite[Section 1.3]{beck2020lipschitz}, the external term \(f\) is assumed to be only in a Lorentz space \(L(n,\,1)\)
\[\textrm{i.e.,}\quad \lVert f\rVert_{L(n,\,1)(\Omega)}\coloneqq \int_{0}^{\infty}{\mathcal L}^{n}\left(\{x\in\Omega\mid \lvert f(x)\rvert>\lambda\}\right)^{1/n}\,d\lambda <\infty\]
for the case \(n\ge 3\), and for the case \(n=2\) only in an Orlicz space \(\mathrm{L}^{2}(\mathrm{Log}\,\mathrm{L})^{\alpha}\,(\alpha>2)\),
\[\textrm{i.e.,}\quad \int_{\Omega}\lvert f\rvert^{2}\log^{\alpha}(1+\lvert f\rvert)\,dx<\infty\]
for some \(\alpha>2\).
As a special case of \cite[Theorem 1.9]{beck2020lipschitz}, we are able to conclude that 
\begin{equation}\label{Nonlinear potential result}
\sup\limits_{B_{R/2}}\left\lvert\nabla u^{\epsilon}\right\rvert\le C(n,\,p,\,\beta)\left(1+\lVert f\rVert_{L(n,\,1)(B_{R})}^{1/(p-1)}+R^{-n/p}\left\lVert\nabla u^{\epsilon}\right\rVert_{L^{p}(B_{R})}\right)\quad \textrm{for any }n\ge 3 \textrm{ and } B_{R}\subset\Omega,
\end{equation}
where \(u^{\epsilon}\in W^{1,\,p}(\Omega)\) is a weak solution to (\ref{Regularized Crystal Eq}) with \(f\in L(n,\,1)(\Omega)\) (for details, see Remark \ref{Nonlinear Potential Result} in Section \ref{Sect Uniform local Lipschitz bound}).
We recall that continuous and strict inclusions \(L^{n+\epsilon}\subsetneq L(n,\,1)\subsetneq L^{n}\) hold true for any \(\epsilon>0\), and the assumption \(f\in L(n,\,1)\) can be regarded as critical from previous researches on elliptic regularity for solutions to \(-\Delta_{p}u=f\) (see \cite{MR1189042}, \cite{MR607898}).
A sharp estimate for \(n=2,\,f\in \mathrm{L}^{2}(\mathrm{Log}\,\mathrm{L})^{\alpha}\,(\alpha>2)\) can also be deduced from \cite[Theorem 1.11]{beck2020lipschitz} (we note that continuous and strict inclusions \(L^{2+\epsilon}\subsetneq\mathrm{L}^{2}(\mathrm{Log}\,\mathrm{L})^{\alpha}\subsetneq L^{2}\) hold true for any \(\alpha>0\) and \(\epsilon>0\)).

Their strategy for the proof of local Lipschitz bounds \cite[Theorem 1.9 and 1.11]{beck2020lipschitz} broadly consist four parts; construction of approximation schemes \cite[Section 4.1]{beck2020lipschitz}, a Caccioppoli-type estimate for approximated solutions, an iteration \cite[Section 3.1, 4.2 and 4.3]{beck2020lipschitz}, and justification of the convergence \cite[Section 4.4]{beck2020lipschitz}.
It seems that our basic strategy is almost similar to theirs, but in fact the details and individual methods of our proofs are quite different from theirs.
Although our Lipschitz bounds (Theorem \ref{Lipschitz bound for Crystal Eq}-\ref{Lipschitz bound for 1+p Laplacian}, Proposition \ref{Lipschitz Bound}-\ref{De Giorgi's truncation proof}) are somewhat weaker than these sharp estimates by Beck and Mingione, our methods are rather elementary and do not appeal to the nonlinear potential theory \cite{MR1207810} at all. The significant difference is that, compared with a key estimate obtained by a nonlinear iteration argument \cite[Lemma 3.1]{beck2020lipschitz}, our iteration arguments in the proofs of Proposition \ref{Lipschitz Bound}-\ref{De Giorgi's truncation proof} are rather classical and elementary. It should also be noted that another key estimate by Beck and Mingione lies in a Caccioppoli-type estimate \cite[Lemma 4.5]{beck2020lipschitz}, and this is deduced from an weak formulation, which is almost similar to (\ref{tested equation 1}) in this paper. In the proof of \cite[Lemma 4.5]{beck2020lipschitz}, they did fully use De Giorgi's truncation but they did not use Moser's iteration at all, whereas our key estimates in Proposition \ref{Lipschitz Bound} are obtained by Moser's iteration. They chose test functions which differ from those in our proof of Proposition \ref{Lipschitz Bound}-\ref{De Giorgi's truncation proof}, so that our arguments given in Section \ref{Subsect Preliminary} are not needed.
It is sure that they did both make use of approximation schemes and justify the convergence of approximated solutions, but their approaches concerning these are quite different from our direct and elementary ones given in Section \ref{Sect Approximation schemes} and Appendix.

\subsection{Organization of the paper}\label{Subsect Organization}
We outline the contents of the paper.

Section \ref{Sect weak solution} provides a proper definition of weak solutions to (\ref{1+p Laplacian}) in Definition \ref{Def of Weak solutions}. We also prove two properties of weak solutions, the minimizing property of weak solutions (Corollary \ref{solution is minimizer}) and the stability of weak solutions (Corollary \ref{stability of solutions}). These two results are used later in Section \ref{Sect Approximation schemes} to complete the proof of main theorem.

Section \ref{Sect Approximation schemes} deals with approximation schemes. We introduce a parameter \(0<\epsilon\le 1\) and give suitable approximation schemes globally or locally. This approximation argument is inspired by DiBenedetto's work in 1983 \cite{MR709038} and Kr\"{u}gel's doctorial thesis in 2013 \cite{krugel2013variational}.
A justification for convergence is partially discussed by Kr\"{u}gel for some special cases. It is easy to modify arguments therein for general conditions. Results on convergence are used without proof in Section \ref{Sect Approximation schemes}, and the precise proof of these is described in Lemma \ref{strong convergence lemma} in Appendix.
In Section \ref{Subsect Global approximation}, via global approximation we prove Proposition \ref{smooth approximation of subgradient vector field}, which states the converse of Corollary \ref{solution is minimizer}. In Section \ref{Subsect Local approximation}, we give a proof of Theorem \ref{Lipschitz bound for 1+p Laplacian} through local approximation, making use of Lemma \ref{strong convergence lemma}-\ref{weak+uniform bound} in Appendix, Corollary \ref{solution is minimizer}-\ref{stability of solutions} in Section \ref{Sect weak solution}, and Proposition \ref{smooth approximation of subgradient vector field}-\ref{Lipschitz Bound} in Section \ref{Sect Approximation schemes}. Proposition \ref{Lipschitz Bound} in Section \ref{Subsect Local approximation} states a local a priori Lipschitz estimate for solutions to regularized equations, uniformly for an approximation parameter \(0<\epsilon\le 1\) and this plays an important role in the proof of Theorem \ref{Lipschitz bound for 1+p Laplacian}. Proposition \ref{Lipschitz Bound} will be proved in Section \ref{Subsect Moser's iteration}.

Section \ref{Sect Uniform local Lipschitz bound} establishes local a priori Lipschitz estimates for solutions to regularized equations, uniformly for \(0<\epsilon\le 1\). Section \ref{Subsect Preliminary} presents some preliminaries for proofs of local a priori uniform Lipschitz estimates. In Section \ref{Subsect Moser's iteration}, we give a proof of Proposition \ref{Lipschitz Bound} by Moser's iteration. This proof is essentially a modification of that of \cite[Proposition 3.3]{MR709038}, and more general than that of \cite[Lemma 4.9]{krugel2013variational}. In Section \ref{Subsect De Giorgi's truncation}, we also obtain another local a priori uniform Lipschitz estimate by De Giorgi's truncation (Proposition \ref{De Giorgi's truncation proof}). This is an adaptation of the proof of \cite[Theorem 4.1, Method 1]{MR2777537}.

Appendix contains precise proofs of three lemmas (Lemma \ref{Vector inequalities}-\ref{weak+uniform bound}), which are used throughout the paper.

\section{Definition and basic properties of weak solutions}\label{Sect weak solution}
In Section \ref{Sect weak solution}, we define weak solutions to (\ref{1+p Laplacian}).
A proper meaning of \(\nabla u/\lvert\nabla u\rvert\) is given in the sense of a subdifferential.
\begin{definition}\label{Def of Weak solutions}
A pair \((u,\,Z)\in W^{1,\,p}(\Omega)\times L^{\infty}\left(\Omega,\,{\mathbb R}^{n}\right)\) is called a \textit{weak} solution to (\ref{1+p Laplacian}) when it satisfies
\begin{equation}\label{Variational Equarity}
\beta\int_{\Omega}\langle Z\mid\nabla\phi\rangle\,dx+\int_{\Omega}\left\langle\nabla_{z}E_{p}(\nabla u) \mathrel{}\middle|\mathrel{} \nabla\phi\right\rangle\,dx=\int_{\Omega}f\phi\,dx 
\end{equation}
for all \(\phi\in W_{0}^{1,\,p}(\Omega)\), and
\begin{equation}\label{Subgradient}
Z(x)\in\partial\Psi(\nabla u(x))
\end{equation}
for a.e. \(x\in\Omega\). Here \(\partial\Psi(z_{0})\subset{\mathbb R}^{n}\) denotes the subdifferential at \(z_{0}\in{\mathbb R}^{n}\) for the convex functional in \({\mathbb R}^{n}\), \(\Psi(z)\coloneqq \lvert z\rvert\),
\[\textrm{i.e., }\partial\Psi(z_{0})=\left\{\begin{array}{cc}
\left\{\displaystyle\frac{z_{0}}{\lvert z_{0}\rvert} \right\}& (z_{0}\not= 0),\\
\left\{w\in{\mathbb R}^{n}\mathrel{}\middle| \mathrel{}\lvert w\rvert\le 1\right\}& (z_{0}=0).
\end{array}\right.\]
For \(u \in W^{1,p}(\Omega)\), if there is \(Z\in L^{\infty}\left(\Omega,\,{\mathbb R}^{n}\right)\) such that \((u,Z)\) is a weak solution to (\ref{1+p Laplacian}), we simply say that \(u\) is a solution to (\ref{1+p Laplacian}) in \textit{weak} sense.
\end{definition}
\begin{remark}\upshape
To define a weak solution to (\ref{1+p Laplacian}), we may weaken the assumption \(n<q\le\infty\). For example, if \(1<p<n\), then equation (\ref{Variational Equarity}) makes sense for \[(p_{\ast})^{\prime}=\frac{np}{np-n+p}\le q\le\infty,\]
since the Sobolev embedding \(W_{0}^{1,\,p}(\Omega)\hookrightarrow L^{q^{\prime}}(\Omega)\) holds true. We also note that if \(q>(p_{\ast})^{\prime}\), this embedding is compact. Similarly, for the proofs of Corollary \ref{solution is minimizer}-\ref{stability of solutions}, Lemma \ref{lowersemicontinuity of F}, \ref{strong convergence lemma} and Proposition \ref{smooth approximation of subgradient vector field}, it is possible to weaken the assumption \(n<q\le\infty\). We omit this, however, since the assumption \(n<q\le\infty\) is optimal for Lipschitz regularity. Throughout the paper we use the fact that, for a bounded Lipschitz domain \(V\subset{\mathbb R}^{n}\), continous embeddings
\[W_{0}^{1,\,p}(V)\hookrightarrow L^{q^{\prime}}(V),\quad W^{1,\,p}(V)\hookrightarrow L^{q^{\prime}}(V)\]
hold true and they are compact if \(n<q\le\infty\). See \cite[Chapter 4 and 6]{MR2424078} for the complete bibliography.
\end{remark}
\begin{remark}\upshape
Local H\"{o}lder regularity of weak solutions to (\ref{crystal model eq}) can be easily obtained by perturbations from \(p\)-harmonic functions. More regularity property of vector field \(Z\) (for instance, H\"{o}lder regularity) is not discovered yet, which makes it difficult to obtain even local Lipschitz regularity for solutions to (\ref{crystal model eq}). We refer to \cite[Section 2 and 3]{MR1100802} as a related item.
\end{remark}

Before showing basic properties for weak solutions to (\ref{1+p Laplacian}), here we state some elementary estimates on \(E_{p},\,E_{p}^{\epsilon}\).
From (\ref{elliptic p-regular}), it is easy to get
\begin{equation}\label{values of Ep and nabla Ep at 0}
E_{p}(0)=0,\,\nabla_{z}E_{p}(0)=0.
\end{equation}
Therefore we may take sufficiently small \(\epsilon_{0}\in(0,\,1)\) such that
\begin{equation}\label{uniformly bound estimate approximated Ep and nabla Ep at 0}
\sup\limits_{0<\epsilon\le\epsilon_{0}}\left\lvert E_{p}^{\epsilon}(0)\right\rvert\le 1\textrm{ and }\sup\limits_{0<\epsilon\le\epsilon_{0}}\left\lvert\nabla_{z} E_{p}^{\epsilon}(0)\right\rvert\le 1.
\end{equation}
From (\ref{elliptic condition for relaxed p-th growth term})-(\ref{convergence condition of nabla p-th growth term}) and (\ref{values of Ep and nabla Ep at 0}), elementary calculation yields that
\begin{equation}\label{vectror inequalities on coercivity}
\left\langle \nabla_{z}E_{p}(z_{2})-\nabla_{z}E_{p}(z_{1})\mathrel{}\middle|\mathrel{} z_{2}-z_{1}\right\rangle\ge\left\{\begin{array}{cc}
c_{1}\cdot C(p)\lvert z_{1}-z_{2}\rvert^{p} & (p\ge 2),\\
c_{1}\lvert z_{1}-z_{2}\rvert^{2}\left(\epsilon^{2}+\lvert z_{1}\rvert^{2}+\lvert z_{2}\rvert^{2}\right)^{p/2-1} & (1<p<2),
\end{array} \right.
\end{equation}
\begin{equation}\label{uniform continuity of nabla approximated Ep}
\left\lvert \nabla_{z}E_{p}^{\epsilon}(z_{1})-\nabla_{z}E_{p}^{\epsilon}(z_{2}) \right\rvert\le\left\{\begin{array}{cc}
c_{2}\cdot C(p)\left(\epsilon^{p-2}+\lvert z_{1}\rvert^{p-2}+\lvert z_{2}\rvert^{p-2}\right)\lvert z_{1}-z_{2}\rvert & (2\le p<\infty),\\
c_{2}\cdot C(p)\lvert z_{1}-z_{2}\rvert^{p-1}& (1<p<2),
\end{array} \right.
\end{equation}
\begin{equation}\label{growth estimate of nabla Ep}
\left\lvert\nabla_{z} E_{p}(z_{0})\right\rvert\le c_{2}\cdot C(p)\lvert z_{0}\rvert^{p-1},
\end{equation}
\begin{equation}\label{growth estimate of nabla aproximated Ep}
\left\lvert\nabla_{z} E_{p}^{\epsilon}(z_{0})-\nabla_{z} E_{p}^{\epsilon}(0)\right\rvert\le \left\{\begin{array}{cc}
c_{2}\cdot C(p)\left(\epsilon^{p-1}+\lvert z_{0}\rvert^{p-1}\right) & (2\le p<\infty),\\
c_{2}\cdot C(p)\lvert z_{0}\rvert^{p-1}& (1<p<2),
\end{array} \right.
\end{equation}
\begin{equation}\label{growth estimate of approximated Ep}
\left\lvert E_{p}^{\epsilon}(z_{0})-E_{p}^{\epsilon}(0)\right\rvert\le\left\{\begin{array}{cc}
C(c_{2},\,p)\left(\epsilon^{p-1}\lvert z_{0}\rvert+\left\lvert \nabla_{z}E_{p}^{\epsilon}(0)\right\rvert\lvert z_{0}\rvert+\lvert z_{0}\rvert^{p} \right) & (2\le p<\infty),\\
C(c_{2},\,p)\left(\left\lvert \nabla_{z}E_{p}^{\epsilon}(0)\right\rvert\lvert z_{0}\rvert+\lvert z_{0}\rvert^{p} \right) & (1<p<2),
\end{array}\right.
\end{equation}
\begin{align}\label{coercivity of approximated Ep}
E_{p}^{\epsilon}(z_{0})-E_{p}^{\epsilon}(0)-\left\langle\nabla_{z}E_{p}^{\epsilon}(0) \mathrel{}\middle|\mathrel{} z_{0}\right\rangle&
\ge\left\langle \nabla_{z}E_{p}^{\epsilon}(z_{0})-\nabla_{z}E_{p}^{\epsilon}(0)\mathrel{}\middle|\mathrel{} z_{0}\right\rangle\nonumber\\&\ge\left\{\begin{array}{cc}
c_{1}\cdot C(p)\lvert z_{0}\rvert^{p} & (2\le p<\infty),\\
c_{1}\left[\left(\epsilon^{2}+\lvert z_{0}\rvert^{2}\right)^{p/2}-\epsilon^{p}\right] & (1<p<2),
\end{array}\right.
\end{align}
for all \(z_{0},\,z_{1},\,z_{2}\in{\mathbb R}^{n}\) and \(0<\epsilon\le 1\). Here we omit the proof of (\ref{vectror inequalities on coercivity})-(\ref{coercivity of approximated Ep}). For details, see Lemma \ref{Vector inequalities} in Appendix.
\begin{remark}\upshape
We can deduce an inequality of the type (\ref{elliptic p-regular}) from (\ref{convergence condition of p-th growth term})-(\ref{convergence condition of nabla p-th growth term}) and (\ref{growth estimate of approximated Ep})-(\ref{coercivity of approximated Ep}).
Therefore we may assume (\ref{elliptic condition for relaxed p-th growth term})-(\ref{convergence condition of nabla p-th growth term}) and (\ref{values of Ep and nabla Ep at 0}), instead of (\ref{elliptic p-regular})-(\ref{convergence condition of nabla p-th growth term}).
\end{remark}

As pointed out in Section \ref{Subsect typical model}, equation (\ref{1+p Laplacian}) derives from a minimizing problem of variational integral
\begin{equation}\label{Functional involving linear growth}
F_{\Omega}(u)\coloneqq \beta\int_{\Omega}\lvert\nabla u\rvert\,dx+\int_{\Omega}E_{p}(\nabla u)\,dx-\int_{\Omega}fu\,dx
\end{equation}
under a certain boundary condition.
We first verify that a weak solution to (\ref{1+p Laplacian}) is a minimizer of the functional \(F_{\Omega}\) on a suitable function class.
\begin{corollary}\label{solution is minimizer}
Let \((u,\,Z)\in W^{1,\,p}(\Omega)\times L^{\infty}\left(\Omega,\,{\mathbb R}^{n}\right)\) be a weak solution to (\ref{1+p Laplacian}). Then we obtain \(F_{\Omega}(u)\le F_{\Omega}(v)\) for all \(v\in u+W_{0}^{1,\,p}(\Omega)\). Here \(F_{\Omega}\colon W^{1,\,p}(\Omega)\rightarrow {\mathbb R}\) is defined as in (\ref{Functional involving linear growth}).
\end{corollary}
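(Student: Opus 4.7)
The plan is to reduce the minimization claim to the weak formulation (\ref{Variational Equarity}) via two pointwise convexity inequalities, one for $\Psi(z)=|z|$ and one for $E_p$.

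Fix an arbitrary $v\in u+W_0^{1,p}(\Omega)$ and set $\phi\coloneqq v-u\in W_0^{1,p}(\Omega)$. First I would write
\begin{equation*}
F_\Omega(v)-F_\Omega(u)=\beta\int_\Omega\bigl(|\nabla v|-|\nabla u|\bigr)\,dx+\int_\Omega\bigl(E_p(\nabla v)-E_p(\nabla u)\bigr)\,dx-\int_\Omega f\phi\,dx,
\end{equation*}
which is well defined: $E_p(\nabla u),E_p(\nabla v)\in L^1(\Omega)$ thanks to the growth (\ref{elliptic p-regular}) and $u,v\in W^{1,p}(\Omega)$, while $f\phi\in L^1(\Omega)$ by H\"older and the Sobolev embedding $W_0^{1,p}(\Omega)\hookrightarrow L^{q'}(\Omega)$ discussed in the remark after Definition \ref{Def of Weak solutions}.

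Next I would apply two convexity inequalities pointwise a.e. in $\Omega$. Since $\Psi(z)=|z|$ is convex and $Z(x)\in\partial\Psi(\nabla u(x))$ a.e.\ by (\ref{Subgradient}), the defining property of the subdifferential gives
\begin{equation*}
|\nabla v(x)|-|\nabla u(x)|\ge\langle Z(x)\mid\nabla v(x)-\nabla u(x)\rangle=\langle Z(x)\mid\nabla\phi(x)\rangle\qquad\text{a.e.\ }x\in\Omega.
\end{equation*}
Similarly, $E_p$ is convex and of class $C^1$, so
\begin{equation*}
E_p(\nabla v(x))-E_p(\nabla u(x))\ge\langle\nabla_z E_p(\nabla u(x))\mid\nabla\phi(x)\rangle\qquad\text{a.e.\ }x\in\Omega.
\end{equation*}
Both right-hand sides are integrable: $Z\in L^\infty$ and $\nabla\phi\in L^p$; and $\nabla_z E_p(\nabla u)\in L^{p/(p-1)}(\Omega,\mathbb{R}^n)$ by the growth bound (\ref{growth estimate of nabla Ep}), so the inner product is in $L^1$.

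Integrating the two inequalities and adding them (the first multiplied by $\beta$) yields
\begin{equation*}
F_\Omega(v)-F_\Omega(u)\ge\beta\int_\Omega\langle Z\mid\nabla\phi\rangle\,dx+\int_\Omega\langle\nabla_z E_p(\nabla u)\mid\nabla\phi\rangle\,dx-\int_\Omega f\phi\,dx,
\end{equation*}
and the right-hand side equals zero by (\ref{Variational Equarity}) applied to the admissible test function $\phi$. This proves $F_\Omega(u)\le F_\Omega(v)$.

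There is no real obstacle here; the argument is a textbook convexity trick. The only things to be careful about are the integrability justifications above and the pointwise identification of $Z$ with a subgradient element, which are both handled directly by the hypotheses. Strict convexity of $E_p$ is not needed for the minimization inequality itself, though it would yield uniqueness modulo the linear-growth part.
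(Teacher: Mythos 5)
Your proof is correct and follows exactly the same route as the paper's: the two pointwise subgradient inequalities for $\Psi(z)=|z|$ and for $E_p$, integrated and combined with (\ref{Variational Equarity}) applied to $\phi=v-u$. The extra integrability remarks you add are sound but not spelled out in the paper's own one-paragraph proof.
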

\begin{proof}
We note that \(\partial E_{p}(z_{0})=\left\{\nabla_{z}E_{p}(z_{0})\right\}\) for all \(z_{0}\in{\mathbb R}^{n}\), since \(E_{p}\in C^{1}\left({\mathbb R}^{n}\right)\) is convex. Combining this with (\ref{Subgradient}), we have subgradient inequalities
\[\lvert \nabla v\rvert -\lvert\nabla u\rvert\ge \langle Z\mid\nabla (v-u)\rangle,\quad E_{p}(\nabla v)-E_{p}(\nabla u)\ge \left\langle\nabla_{z}E_{p}(\nabla u) \mathrel{}\middle|\mathrel{} \nabla(v-u)\right\rangle\quad \textrm{ a.e. in } \Omega.\]
Testing \(\phi\coloneqq v-u\in W_{0}^{1,\,p}(\Omega)\) in (\ref{Variational Equarity}), we obtain 
\[0=\beta\int_{\Omega}\langle Z\mid\nabla(v-u)\rangle\,dx+\int_{\Omega}\left\langle\nabla_{z}E_{p}(\nabla u) \mathrel{}\middle|\mathrel{} \nabla(v-u)\right\rangle\,dx-\int_{\Omega}f(v-u)\,dx \le F_{\Omega}(v)-F_{\Omega}(u). \qedhere\]
\end{proof}

We also mention the stability estimate of solutions, which is needed to complete the proof of Theorem \ref{Lipschitz bound for 1+p Laplacian}.

\begin{corollary}\label{stability of solutions}
Let \(f_{1},\,f_{2}\in L^{q}(\Omega)\,(n< q\le\infty)\).
Assume that \((u_{1},\,Z_{1}),\,(u_{2},\,Z_{2})\in W^{1,\,p}(\Omega)\times L^{\infty}(\Omega,\,{\mathbb R}^{n})\) satisfy
\[-\beta \divx (\nabla u_{j}/\lvert \nabla u_{j}\rvert)-\divx\nabla_{z} E_{p}(\nabla u_{j})\ni f_{j}\quad  \textrm{ in } \Omega \quad \textrm{ for each }j\in\{\,1,\,2\,\}\]
in weak sense. If \(u_{1}-u_{2}\in W_{0}^{1,\,p}(\Omega)\), then we obtain
\begin{equation}\label{Stability for p>2}
\lVert \nabla u_{1}-\nabla u_{2}\rVert_{L^{p}(\Omega)}\le C(n,\,\,p,\,q,\,c_{1},\,\Omega)\lVert f_{1}-f_{2}\rVert_{L^{q}(\Omega)}^{1/(p-1)}
\end{equation}
for \(p\ge 2\). For \(1<p<2\), instead of (\ref{Stability for p>2}) we obtain
\begin{equation}\label{Stability for 1<p<2}
\lVert \nabla u_{1}-\nabla u_{2}\rVert_{L^{1}(\Omega)}\le C(n,\,p,\,q,\,\beta,\,c_{1},\,c_{2},\,\Omega)\left(1+\lVert \nabla u_{2}\rVert_{L^{p}(\Omega)}^{p}+\lVert f_{1}\rVert_{L^{q}(\Omega)}^{p^{\prime}}\right) \lVert f_{1}-f_{2}\rVert_{L^{q}(\Omega)}^{1/2},
\end{equation}
where \(p^{\prime}\coloneqq p/(p-1)\in(1,\,\infty)\) denotes the H\"{o}lder conjugate of \(p\).
\end{corollary}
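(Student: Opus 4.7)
The plan is to subtract the two weak formulations, test the difference with the admissible function $\phi = u_1 - u_2 \in W_0^{1,p}(\Omega)$, use monotonicity of the subdifferential $\partial\Psi$ to discard the singular $Z$--term, and close with the coercivity (\ref{vectror inequalities on coercivity}) of $\nabla_z E_p$ together with H\"{o}lder and the Sobolev--Poincar\'{e} embedding $W_0^{1,p}(\Omega) \hookrightarrow L^{q'}(\Omega)$ (valid since $q > n$). Concretely, testing (\ref{Variational Equarity}) for $(u_1, Z_1)$ and $(u_2, Z_2)$ with $\phi = u_1 - u_2$ and subtracting yields
\[\beta \int_\Omega \langle Z_1 - Z_2 \mid \nabla(u_1 - u_2)\rangle\, dx + \int_\Omega \langle \nabla_z E_p(\nabla u_1) - \nabla_z E_p(\nabla u_2) \mid \nabla(u_1 - u_2)\rangle\, dx = \int_\Omega (f_1 - f_2)(u_1 - u_2)\, dx,\]
and since $\Psi(z) = |z|$ is convex with $Z_j \in \partial\Psi(\nabla u_j)$, the first integrand is nonnegative a.e.\ by monotonicity and may be dropped.

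For $p \ge 2$ the bound (\ref{vectror inequalities on coercivity}) applied to the second integral gives
\[c_1 C(p) \|\nabla(u_1-u_2)\|_{L^p(\Omega)}^p \le \|f_1 - f_2\|_{L^q(\Omega)} \|u_1 - u_2\|_{L^{q'}(\Omega)} \le C \|f_1-f_2\|_{L^q(\Omega)} \|\nabla(u_1-u_2)\|_{L^p(\Omega)},\]
where the last step is H\"{o}lder followed by Sobolev--Poincar\'{e}; cancelling one factor of $\|\nabla(u_1-u_2)\|_{L^p}$ immediately produces (\ref{Stability for p>2}).

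The case $1 < p < 2$ is the real obstacle, because the coercivity in (\ref{vectror inequalities on coercivity}) is weighted by $(|\nabla u_1|^2 + |\nabla u_2|^2)^{(p-2)/2}$, which degenerates for large gradients. To recover an $L^1$ bound the plan is to apply Cauchy--Schwarz in the split
\[\int_\Omega |\nabla(u_1-u_2)|\, dx \le \left(\int_\Omega |\nabla(u_1-u_2)|^2 (|\nabla u_1|^2 + |\nabla u_2|^2)^{(p-2)/2}\, dx\right)^{1/2} \left(\int_\Omega (|\nabla u_1|^2 + |\nabla u_2|^2)^{(2-p)/2}\, dx\right)^{1/2}.\]
The first factor is controlled by $C \|f_1-f_2\|_{L^q}^{1/2} \|\nabla(u_1-u_2)\|_{L^p}^{1/2}$ via (\ref{vectror inequalities on coercivity}) and H\"{o}lder--Sobolev as in the easy case; the second factor, since $2-p < p$ and $|\Omega| < \infty$, is bounded by $C(\Omega) \bigl(\|\nabla u_1\|_{L^p}^{2-p} + \|\nabla u_2\|_{L^p}^{2-p}\bigr)^{1/2}$ by H\"{o}lder on the bounded domain. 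Combining, one obtains $\|\nabla(u_1-u_2)\|_{L^1} \le C \|f_1-f_2\|_{L^q}^{1/2} \bigl(\|\nabla u_1\|_{L^p} + \|\nabla u_2\|_{L^p}\bigr)^{(3-p)/2}$.

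To eliminate $\|\nabla u_1\|_{L^p}$ from the right-hand side, the plan is to invoke the minimizing property (Corollary \ref{solution is minimizer}) with $u_2 \in u_1 + W_0^{1,p}(\Omega)$ as a competitor: from $F_\Omega(u_1) \le F_\Omega(u_2)$, combined with (\ref{elliptic p-regular}), Sobolev--Poincar\'{e}, and Young's inequality to absorb a small fraction of $\|\nabla u_1\|_{L^p}^p$ on the left, one obtains the a priori bound
\[\|\nabla u_1\|_{L^p(\Omega)}^p \le C \bigl(1 + \|\nabla u_2\|_{L^p(\Omega)}^p + \|f_1\|_{L^q(\Omega)}^{p'}\bigr).\]
Substituting into the previous estimate and using that $(3-p)/(2p) \le 1$ for $p \ge 1$, so the elementary inequality $y^\alpha \le 1 + y$ for $y \ge 0$ and $\alpha \in [0,1]$ applies, delivers (\ref{Stability for 1<p<2}).
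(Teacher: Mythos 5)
Your proposal is correct and follows essentially the same route as the paper's own proof: test the difference $u_1-u_2$, discard the $Z$-term by monotonicity of $\partial\Psi$, use the coercivity (\ref{vectror inequalities on coercivity}) with H\"{o}lder and Sobolev--Poincar\'{e}, and for $1<p<2$ split by Cauchy--Schwarz with the degenerate weight and close the a priori bound on $\lVert\nabla u_1\rVert_{L^p}$ via the minimizing property (the paper works with the weight $(1+\lvert\nabla u_1\rvert^2+\lvert\nabla u_2\rvert^2)^{p/2-1}$ instead of $(\lvert\nabla u_1\rvert^2+\lvert\nabla u_2\rvert^2)^{p/2-1}$ and organizes the final power bookkeeping slightly differently, but the argument is the same).
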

\begin{proof}
Test \(u_{1}-u_{2}\in W_{0}^{1,\,p}(\Omega)\) in each equation. Then we obtain
\[\int_{\Omega}\langle Z_{1}-Z_{2}\mid \nabla(u_{1}-u_{2})\rangle\,dx+\int_{\Omega}\langle \nabla_{z}E_{p}(\nabla u_{1})-\nabla_{z}E_{p}(\nabla u_{2})\mid \nabla(u_{1}-u_{2})\rangle\,dx=\int_{\Omega}(f_{1}-f_{2})(u_{1}-u_{2})\,dx.\]
Since the subdifferential operator \(\partial\Psi=\partial\lvert\,\cdot\,\rvert\) is monotone (see for instance \cite{MR0348562}), we deduce that
\[\langle Z_{1}-Z_{2}\mid \nabla(u_{1}-u_{2})\rangle\ge 0\quad \textrm{ a.e. in }\Omega\]
from (\ref{Subgradient}).
By (\ref{vectror inequalities on coercivity}), we obtain
\[\langle \nabla_{z}E_{p}(\nabla u_{1})-\nabla_{z}E_{p}(\nabla u_{2})\mid \nabla(u_{1}-u_{2})\rangle\ge\left\{\begin{array}{cc}
c_{1}\cdot C(p)\lvert \nabla (u_{1}-u_{2})\rvert^{p}& (p\ge 2),\\
c_{1}(1+\lvert \nabla u_{1}\rvert^{2}+\lvert\nabla u_{2} \rvert^{2})^{p/2-1}\lvert\nabla(u_{1}-u_{2})\rvert^{2}& (1<p<2),
\end{array}\right.\quad \textrm{ a.e. in }\Omega.\]
By the Sobolev embedding \(W_{0}^{1,\,p}(\Omega)\hookrightarrow L^{q^{\prime}}(\Omega)\), we get for \(p\ge 2\),
\begin{align*}
c_{1}\cdot C(p)\lVert\nabla u_{1}-\nabla u_{2}\rVert_{L^{p}(\Omega)}^{p}&\le \int_{\Omega}\lvert f_{1}-f_{2}\rvert \lvert u_{1}-u_{2}\rvert\,dx\\&\le C(n,\,p,\,q,\,\Omega)\lVert f_{1}-f_{2}\rVert_{L^{q}(\Omega)}\lVert\nabla u_{1}-\nabla u_{2} \rVert_{L^{p}(\Omega)}.
\end{align*}
From this we conclude (\ref{Stability for p>2}). Similarly for \(1<p<2\), we get
\begin{align*}
\lVert \nabla u_{1}-\nabla u_{2}\rVert_{L^{1}(\Omega)}&\le \left(\int_{\Omega}\lvert \nabla (u_{1}-u_{2})\rvert^{2}\left(1+\lvert\nabla u_{1}\rvert^{2}+\lvert\nabla u_{2}\rvert^{2}\right)^{p/2-1}\,dx\right)^{1/2}\left(\int_{\Omega}\left(1+\lvert\nabla u_{1}\rvert^{2}+\lvert\nabla u_{2}\rvert^{2}\right)^{1-p/2}\,dx\right)^{1/2}\\&\le C(n,\,p,\,q,\,\Omega)\lVert f_{1}-f_{2}\rVert_{L^{q}(\Omega)}^{1/2}\lVert\nabla (u_{1}-u_{2}) \rVert_{L^{p}(\Omega)}^{1/2}\left(\int_{\Omega}\left(1+\lvert\nabla u_{1}\rvert^{2-p}+\lvert\nabla u_{2}\rvert^{2-p}\right)\,dx\right)^{1/2}\\&\le C(n,\,p,\,q,\,\Omega)\lVert f_{1}-f_{2}\rVert_{L^{q}(\Omega)}^{1/2}\left(1+\lVert\nabla u_{1}\rVert_{L^{p}(\Omega)}^{p}+\lVert\nabla u_{2}\rVert_{L^{p}(\Omega)}^{p} \right)
\end{align*}
by the Young inequaltiy (see \cite[Chapter 7.1]{MR1814364}, \cite[Chapter 2.1 (3)]{MR0244627}). It suffices to show that
\begin{equation}\label{claim for stability estimate}
\lVert \nabla u_{1}\rVert_{L^{p}(\Omega)}^{p}\le C(n,\,p,\,q,\,\beta,\,c_{1},\,c_{2},\,\Omega)\left(1+\lVert\nabla u_{2}\rVert_{L^{p}(\Omega)}^{p}+\lVert f_{1}\rVert_{L^{q}(\Omega)}^{p^{\prime}}\right)
\end{equation}
to complete the proof of (\ref{Stability for 1<p<2}).
By (\ref{elliptic p-regular}), the Young inequality, the H\"{o}lder inequality and the inequality
\[\beta\int_{\Omega}\lvert\nabla u_{1}\rvert\,dx+\int_{\Omega}E_{p}(\nabla u_{1})\,dx-\int_{\Omega}f_{1}u_{1}\,dx\le \beta\int_{\Omega}\lvert\nabla u_{2}\rvert\,dx+\int_{\Omega}E_{p}(\nabla u_{2})\,dx-\int_{\Omega}f_{1}u_{2}\,dx \] from Corollary \ref{solution is minimizer},
we get
\begin{align*}
c_{1}\lVert\nabla u_{1}\rVert_{L^{p}(\Omega)}^{p}+\beta\lVert\nabla u_{1}\rVert_{L^{1}(\Omega)}&\le c_{2}\lVert\nabla u_{2}\rVert_{L^{p}(\Omega)}^{p}+\beta\lVert\nabla u_{2}\rVert_{L^{1}(\Omega)}+C(n,\,p,\,q,\,\Omega)\lVert f_{1}\rVert_{L^{q}(\Omega)}\lVert \nabla (u_{1}-u_{2})\rVert_{L^{p}(\Omega)}\\&\le C(\beta,\,c_{2},\,\Omega)\left( 1+\lVert\nabla u_{2}\rVert_{L^{p}(\Omega)}^{p}\right)+C(n,\,p,\,q,\,\Omega)\lVert f_{1}\rVert_{L^{q}(\Omega)}\lVert \nabla u_{2}\rVert_{L^{p}(\Omega)}\\&\quad +C(n,\,p,\,q,\,\Omega)\lVert f_{1}\rVert_{L^{q}(\Omega)}\lVert \nabla u_{1}\rVert_{L^{p}(\Omega)}\\&\le C(n,\,p,\,q,\,\beta,\,c_{1},\,c_{2},\,\Omega)\left(1+\lVert\nabla u_{2}\rVert_{L^{p}(\Omega)}^{p}+\lVert f_{1}\rVert_{L^{q}(\Omega)}^{p^{\prime}}\right) +\frac{c_{1}}{2}\lVert \nabla u_{1}\rVert_{L^{p}(\Omega)}^{p}.
\end{align*}
From this we conclude (\ref{claim for stability estimate}). 
\end{proof}

\section{Approximation schemes}
\label{Sect Approximation schemes}
For each \(0<\epsilon\le 1\), we consider a weak solution \(u^{\epsilon}\) to the equation
\begin{equation}\label{classical smooth equation}
-\divx\nabla_{z}E^{\epsilon}(\nabla u^{\epsilon})=f\in L^{q}(\Omega)\,(n<q\le\infty)
\end{equation} 
in either \(\Omega\) or Lipschitz subdomain \(U\Subset \Omega\).
Here a family of strictly convex functions \(\left\{E^{\epsilon}\right\}_{0<\epsilon\le 1}\subset C^{\infty}\left({\mathbb R}^{n}\right)\) admits constants \(0<C_{1}\le C_{2}<\infty\), independent of \(0<\epsilon\le 1\), such that
\begin{equation}\label{elliptic condition}
C_{1}\left(\epsilon^{2}+\lvert z\rvert^{2}\right)^{p/2-1}\lvert\zeta\rvert^{2}\le \left\langle\nabla_{z}^{2}E^{\epsilon}(z)\zeta\mathrel{}\middle|\mathrel{}\zeta\right\rangle,
\end{equation}
\begin{equation}\label{bound condition}
\left\lvert\left\langle\nabla_{z}^{2}E^{\epsilon}(z_{0})\zeta\mathrel{}\middle| \mathrel{}\omega\right\rangle\right\rvert\le C_{2}\left(\epsilon^{2}+\lvert z_{0}\rvert^{2}\right)^{p/2-1}\lvert\zeta\rvert\lvert\omega\rvert
\end{equation}
for all \(z_{0},\,\zeta,\omega\in {\mathbb R}^{n}\) with \(\lvert z_{0}\rvert\ge 1\). 
Especially in this paper, we consider
\begin{equation}\label{Example of smooth functional}
\Psi^{\epsilon}(z)\coloneqq \sqrt{\epsilon^{2}+\lvert z\rvert^{2}},\quad E^{\epsilon}(z)\coloneqq \beta\Psi^{\epsilon}(z)+E_{p}^{\epsilon}(z)\quad \textrm{ for }z\in{\mathbb R}^{n},
\end{equation}
where \(E_{p}\) and \(\left\{E_{p}^{\epsilon}\right\}_{0<\epsilon\le 1}\) satisfy (\ref{elliptic p-regular})-(\ref{convergence condition of nabla p-th growth term}).
By direct calculation it is easy to check that eigenvalues of \(\nabla_{z}^{2}\Psi^{\epsilon}(z_{0})\), the Hessian matrix of \(\Psi^{\epsilon}\) at \(z_{0}\in{\mathbb R}^{n}\), are given by \(\beta\left(\epsilon^{2}+\lvert z_{0}\rvert^{2}\right)^{-1/2}\) and \(\beta\epsilon^{2}\left(\epsilon^{2}+\lvert z_{0}\rvert^{2}\right)^{-3/2}\).
Hence \(E^{\epsilon}\) defined as in (\ref{Example of smooth functional}) satisfies (\ref{elliptic condition})-(\ref{bound condition}) with \(C_{1}\coloneqq c_{1},\,C_{2}\coloneqq c_{2}+\beta\).

Equation (\ref{classical smooth equation}) derives from the Euler-Lagrange equation of the regularized variational integral
\[F_{V}^{\epsilon}(u)\coloneqq\int_{V}E^{\epsilon}(\nabla u)\,dx-\int_{V}fu\,dx,\]
where \(V=\Omega\) or \(V=U\Subset \Omega\). We also define a functional \(F_{U}\colon W^{1,\,p}(U)\rightarrow {\mathbb R}\) as in (\ref{Functional involving linear growth}), replacing \(\Omega\) by \(U\).
\begin{lemma}\label{lowersemicontinuity of F}
Functionals \(F_{V},\,F_{V}^{\epsilon}\,(0<\epsilon\le 1)\) are lower semi-continuous in \(W^{1,\,p}(V)\) with respect to the weak topology.
\end{lemma}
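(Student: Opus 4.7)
The plan is to decompose each functional into a convex gradient integral plus a linear term in $u$, and then handle the two pieces by standard techniques. First I would write $F_V(u) = J_V(u) + L_V(u)$, where $J_V(u) \coloneqq \beta \int_V \lvert\nabla u\rvert\,dx + \int_V E_p(\nabla u)\,dx$ and $L_V(u) \coloneqq -\int_V fu\,dx$; likewise $F_V^\epsilon(u) = J_V^\epsilon(u) + L_V(u)$ with $J_V^\epsilon(u) \coloneqq \int_V E^\epsilon(\nabla u)\,dx$. For the linear term, the compact Sobolev embedding $W^{1,p}(V) \hookrightarrow L^{q'}(V)$ (valid for $n < q \le \infty$, as recorded in the remark following Definition \ref{Def of Weak solutions}) turns weak convergence in $W^{1,p}(V)$ into strong convergence in $L^{q'}(V)$; H\"older's inequality against $f \in L^q(V)$ then yields $\int_V f u_k\,dx \to \int_V f u\,dx$, so $L_V$ is in fact weakly continuous, and hence in particular weakly lower semi-continuous.

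For $J_V$ and $J_V^\epsilon$, I would invoke the classical principle that a convex, strongly lower semi-continuous functional on a reflexive Banach space is weakly lower semi-continuous (a direct consequence of Mazur's lemma). The integrands $z \mapsto \lvert z\rvert$, $z \mapsto E_p(z)$ and $z \mapsto E^\epsilon(z)$ are each convex and continuous: convexity is explicit in the assumptions of Section \ref{Subsect Main theorem}, and convex functions on $\mathbb{R}^n$ are automatically continuous. Each also admits a $p$-growth upper bound: (\ref{elliptic p-regular}) handles $E_p$, while $\Psi^\epsilon(z) \le \epsilon + \lvert z\rvert$ combined with (\ref{uniformly bound estimate approximated Ep and nabla Ep at 0}) and (\ref{growth estimate of approximated Ep}) handles $E^\epsilon$ (the constant may depend on $\epsilon$, which is harmless for $\epsilon$ fixed). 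Convexity of $J_V$ and $J_V^\epsilon$ on $W^{1,p}(V)$ then follows at once, because $u \mapsto \nabla u$ is linear and integration preserves convexity. Strong continuity on $W^{1,p}(V)$ follows from the dominated convergence theorem after extracting from any strongly convergent sequence a subsequence whose gradients converge almost everywhere in $V$. Together these give weak lower semi-continuity of $J_V$ and $J_V^\epsilon$.

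Summing the two contributions, $F_V$ and $F_V^\epsilon$ are weakly lower semi-continuous on $W^{1,p}(V)$. The only step that warrants a little care is verifying the compactness of $W^{1,p}(V) \hookrightarrow L^{q'}(V)$ uniformly across the three regimes $1 < p < n$, $p = n$, $p > n$, but this is entirely standard and already cited via \cite{MR2424078}. I do not expect any serious obstacle: the proof is essentially a matter of packaging Mazur's lemma, the $p$-growth bounds already in hand, and the compact embedding together.
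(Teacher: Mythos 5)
Your proof is correct and takes essentially the same route as the paper: both rest on the principle that a convex, strongly continuous functional on a Banach space is weakly lower semi-continuous, and both establish strong continuity by extracting an a.e.-convergent, $L^{p}$-dominated subsequence of gradients and applying dominated convergence together with the growth bounds. The only difference is organizational: you split off the linear term $L_{V}$ and treat it via the compact embedding $W^{1,\,p}(V)\hookrightarrow L^{q'}(V)$, whereas the paper simply observes that $F_{V}$ and $F_{V}^{\epsilon}$ are themselves convex (the added affine term costs nothing) and applies the Mazur/Brezis principle to the whole functional; note also that compactness is more than you need for $L_{V}$, since the continuous embedding already sends weakly convergent sequences to weakly convergent ones in $L^{q'}(V)$, on which $u\mapsto\int_{V}fu\,dx$ is a continuous linear functional and hence weakly continuous.
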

Lower semi-continuity of convex energy functionals with respect to the weak topology is generally discussed in \cite[Chapter 8.2.2]{MR1625845} (see also \cite[Chapter I.2]{MR717034}, \cite[Chapter 4.2 and 4.3]{MR1962933}). It is easy to prove Lemma \ref{lowersemicontinuity of F} by making an adaptation of arguments therein. However, we give another simpler proof of Lemma \ref{lowersemicontinuity of F} by showing that functionals \(F_{V},\,F_{V}^{\epsilon}\) are continuous with respect to the strong topology.
\begin{proof}
By \cite[Corollary 3.9]{MR2759829}, we are reduced to showing that convex functionals \(F_{V},\,F_{V}^{\epsilon}\,(0<\epsilon\le 1)\) are continuous in \(W^{1,\,p}(V)\) with respect to the strong topology.
Fix \(0<\epsilon\le 1\) and let \(\{v_{n}\}_{n=1}^{\infty}\subset W^{1,\,p}(V)\) satisfy \(v_{n}\rightarrow v\,(n\to\infty)\) in \(W^{1,\,p}(V)\) for some \(v\in W^{1,\,p}(V)\). We verify that \(F_{V}^{\epsilon}(v_{n})\to F_{V}^{\epsilon}(v)\,(n\to\infty)\). Take any subsequence \(\{v_{n_{j}}\}_{j=1}^{\infty}\subset \{v_{n}\}_{n=1}^{\infty}\). By \cite[Theorem 4.9]{MR2759829} and the continuous embedding \(W^{1,\,p}(V)\hookrightarrow L^{q^{\prime}}(V)\), there exists a subsequence \(\{v_{n_{j_{k}}}\}_{k=1}^{\infty}\subset \{v_{n_{j}}\}_{j=1}^{\infty}\) and \(w\in L^{p}(V)\) such that 
\begin{equation}\label{a.e. conv}
\nabla v_{n_{j_{k}}}\rightarrow \nabla v\,(k\to\infty)\quad\textrm{ a.e. in }V,
\end{equation}
\begin{equation}\label{Lp control}
\lvert v_{n_{j_{k}}}\rvert\le w\quad\textrm{ a.e. in }V \textrm{ and for all } k\in {\mathbb N},
\end{equation}
\begin{equation}\label{Lp' conv}
v_{n_{j_{k}}}\rightarrow v\,(k\to\infty)\quad\textrm{ in }L^{p^{\prime}}(V).
\end{equation}
By (\ref{a.e. conv}) and \(E_{p}^{\epsilon}\in C^{\infty}\left({\mathbb R}^{n}\right)\), we get
\[E^{\epsilon}(\nabla v_{n_{j_{k}}})\rightarrow E^{\epsilon}(\nabla v)\,(k\to\infty)\quad \textrm{ a.e. in }V.\]
By (\ref{growth estimate of approximated Ep}), (\ref{Lp control}) and the Young inequality, we can easily check that
\begin{align*}
E^{\epsilon}(\nabla v_{n_{j_{k}}})&\le \left\lvert E_{p}^{\epsilon}(0)\right\rvert+ C(c_{2},\,p)\left(\epsilon^{p-1}\lvert \nabla v_{n_{j_{k}}}\rvert+\lvert \nabla_{z}E_{p}^{\epsilon}(0)\rvert\lvert \nabla v_{n_{j_{k}}}\rvert+\lvert \nabla v_{n_{j_{k}}}\rvert^{p} \right)\\&\le \left\lvert E_{p}^{\epsilon}(0)\right\rvert+ C(c_{2},\,p)\left(\epsilon^{p}+\lvert \nabla_{z}E_{p}^{\epsilon}(0)\rvert^{p^{\prime}}+w^{p} \right)\in L^{1}(V) \quad \textrm{ a.e. in }V,
\end{align*}
uniformly for \(k\in{\mathbb N}\).
From these we obtain
\[\lim_{k\to\infty} F_{V}^{\epsilon}(v_{n_{j_{k}}})=\lim_{k\to\infty}\int_{V}E^{\epsilon}(\nabla v_{n_{j_{k}}})\,dx-\lim_{k\to\infty}\int_{V}fv_{n_{j_{k}}}\,dx=\int_{V}E^{\epsilon}(\nabla v)\,dx-\int_{V}fv\,dx=F_{V}^{\epsilon}(v)\]
by Lebesgue's dominated convergence theorem and (\ref{Lp' conv}).
Hence it follows that \(F_{V}^{\epsilon}(v_{n})\to F_{V}^{\epsilon}(v)\,(n\to\infty)\). This means that \(F_{V}^{\epsilon}\) is strongly continuous in \(W^{1,\,p}(V)\). From (\ref{elliptic p-regular}) and \(E_{p}\in C^{1}\left({\mathbb R}^{n}\right)\), we similarly conclude that \(F_{V}\) is strongly continuous in \(W^{1,\,p}(V)\). 
\end{proof}

For each fixed \(u_{0}\in W^{1,\,p}(V)\) and \(0<\epsilon\le 1\), we can define
\[u^{\epsilon}\coloneqq \argmin\left\{F_{V}^{\epsilon}(v)\mathrel{}\middle|\mathrel{}v\in u_{0}+W_{0}^{1,\,p}(V)\right\}\in u_{0}+W_{0}^{1,\,p}(V).\]
Using the Young inequality, we can easily check that for all \(v\in u_{0}+W_{0}^{1,\,p}(V)\),
\begin{align*}
F_{V}^{\epsilon}(v)&\ge \int_{V}E_{p}^{\epsilon}(0)\,dx+\int_{V}\left\langle\nabla_{z}E_{p}^{\epsilon}(0)\mathrel{}\middle|\mathrel{}\nabla v \right\rangle\,dx+ C(c_{1},\,p)\int_{V}\left(\lvert \nabla v\rvert^{p}-1\right)\,dx\\&\quad-\lVert f\rVert_{L^{q}(V)}\lVert v-u_{0}\rVert_{L^{q^{\prime}}(V)}-\lVert f\rVert_{L^{q}(V)}\lVert u_{0}\rVert_{L^{q^{\prime}}(V)}\quad(\textrm{by (\ref{coercivity of approximated Ep}) and the H\"{o}lder inequality}) \\&\ge \frac{C(c_{1},\,p)}{2}\lVert \nabla v\rVert_{L^{p}(V)}^{p}-\lVert f\rVert_{L^{q}(V)}\lVert \nabla(v-u_{0})\rVert_{L^{p}(V)}\\&-C\left(n,\,p,\,q,\,c_{1},\,V,\,E_{p}^{\epsilon}(0),\,\nabla_{z}E_{p}^{\epsilon}(0)\right)\left(1+\lVert f\rVert_{L^{q}(V)}\lVert u_{0}\rVert_{W^{1,\,p}(V)}\right)\\&\quad\left(\textrm{by the Sobolev embedding }W^{1,\,p}(V),\,W_{0}^{1,\,p}(V)\hookrightarrow L^{q^{\prime}}(V)\right) \\&\ge \frac{C(c_{1},\,p)}{4}\lVert \nabla v\rVert_{L^{p}(V)}^{p}-C\left(n,\,p,\,q,\,c_{1},\,V,\,E_{p}^{\epsilon}(0),\,\nabla_{z}E_{p}^{\epsilon}(0)\right)\left(1+\lVert f\rVert_{L^{q}(V)}\lVert u_{0}\rVert_{W^{1,\,p}(V)}+\lVert f\rVert_{L^{q}(V)}^{p^{\prime}} \right).
\end{align*}
Combining this with Lemma \ref{lowersemicontinuity of F}, we conclude that \(F_{U}^{\epsilon}\) is coercive and weakly lower semi-continuous in \(u_{0}+W_{0}^{1,\,p}(V)\).
Hence the existence of a minimizer \(u^{\epsilon}\in u_{0}+W_{0}^{1,\,p}(V)\) is guaranteed by direct method (see for instance \cite[Chapter 8.2.2]{MR1625845}, \cite[Chapter I.3 and I.4]{MR717034}, \cite[Chapter 4.4]{MR1962933}). Uniqueness is clear by strict convexity of \(F_{V}^{\epsilon}\) in \(u_{0}+W^{1,\,p}(V)\), since \(E_{p}^{\epsilon}\) is strictly convex.
Similarly we can determine a unique function
\[u\coloneqq \argmin\left\{F_{V}(v)\mathrel{}\middle|\mathrel{}v\in u_{0}+W_{0}^{1,\,p}(V)\right\}\in u_{0}+W_{0}^{1,\,p}(V)\]
for each \(u_{0}\in W^{1,\,p}(V)\). We note that it is easy to deduce that \(F_{V}\) is coercive in \(u_{0}+W_{0}^{1,\,p}\) from (\ref{elliptic p-regular}). Lemma \ref{strong convergence lemma} in Appendix states that \(u^{\epsilon}\rightarrow u\) in \(W^{1,\,p}(V)\) as \(\epsilon\to 0\), up to a subsequence. Results from Lemma \ref{strong convergence lemma} are used throughout Section \ref{Sect Approximation schemes}.

\subsection{Global approximation}
\label{Subsect Global approximation}
From Corollary \ref{solution is minimizer}, if \(u\in W^{1,\,p}(\Omega)\) is a solution to (\ref{1+p Laplacian}) in weak sense, then \(u\) satisfies
\begin{equation}\label{u is minimizer}
u=\argmin\left\{F_{\Omega}(v)\mathrel{}\middle|\mathrel{}v\in u+W_{0}^{1,\,p}(\Omega)\right\}.
\end{equation}
Proposition \ref{smooth approximation of subgradient vector field} states that the converse is true.

\begin{proposition}\label{smooth approximation of subgradient vector field}
Let \(f\in L^{q}(\Omega)\,(n<q\le\infty)\). Assume that \(u\in W^{1,\,p}(\Omega)\) satisfies (\ref{u is minimizer}).
Then \(u\) is a solution to (\ref{1+p Laplacian}) in weak sense. That is, there exists \(Z\in L^{\infty}\left(\Omega,\,{\mathbb R}^{n}\right)\) such that \((u,\,Z)\in W^{1,\,p}(\Omega)\times L^{\infty}\left(\Omega,\,{\mathbb R}^{n}\right)\) is a weak solution to (\ref{1+p Laplacian}).
\end{proposition}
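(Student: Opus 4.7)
The plan is to regularize the nondifferentiable functional $F_{\Omega}$ by the smooth functionals $F_{\Omega}^{\epsilon}$ with $E^{\epsilon}=\beta\Psi^{\epsilon}+E_{p}^{\epsilon}$ built in Section \ref{Sect Approximation schemes}, to take the classical Euler--Lagrange equation of each regularized minimizer, and then to pass to a limit. Concretely, I let $u^{\epsilon}\in u+W_{0}^{1,p}(\Omega)$ be the unique minimizer of $F_{\Omega}^{\epsilon}$ with boundary data $u$, whose existence was established just before Section \ref{Subsect Global approximation}. Since $E^{\epsilon}\in C^{\infty}({\mathbb R}^{n})$ is strictly convex, $u^{\epsilon}$ satisfies the classical Euler--Lagrange identity
\[\beta\int_{\Omega}\langle Z^{\epsilon}\mid \nabla\phi\rangle\,dx + \int_{\Omega}\langle \nabla_{z} E_{p}^{\epsilon}(\nabla u^{\epsilon})\mid \nabla\phi\rangle\,dx = \int_{\Omega} f\phi\,dx\]
for all $\phi\in W_{0}^{1,p}(\Omega)$, where $Z^{\epsilon}\coloneqq \nabla u^{\epsilon}/\sqrt{\epsilon^{2}+\lvert\nabla u^{\epsilon}\rvert^{2}}$ satisfies $\lvert Z^{\epsilon}\rvert\le 1$ a.e.\ in $\Omega$.

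Next I would extract limits. Lemma \ref{strong convergence lemma} in Appendix provides, along some subsequence $\epsilon_{k}\to 0$, the strong convergence $u^{\epsilon_{k}}\to u$ in $W^{1,p}(\Omega)$; passing to a further subsequence yields a.e.\ convergence of $\nabla u^{\epsilon_{k}}$ to $\nabla u$ together with an $L^{p}$-dominating function. By Banach--Alaoglu the uniformly bounded sequence $\{Z^{\epsilon_{k}}\}$ may be assumed to converge weakly-$*$ in $L^{\infty}(\Omega,\,{\mathbb R}^{n})$ to some $Z$ with $\lVert Z\rVert_{L^{\infty}}\le 1$. For the $p$-growth term I would split
\[\nabla_{z} E_{p}^{\epsilon_{k}}(\nabla u^{\epsilon_{k}})-\nabla_{z} E_{p}(\nabla u) = \bigl[\nabla_{z} E_{p}^{\epsilon_{k}}(\nabla u^{\epsilon_{k}})-\nabla_{z} E_{p}^{\epsilon_{k}}(\nabla u)\bigr]+\bigl[\nabla_{z} E_{p}^{\epsilon_{k}}(\nabla u)-\nabla_{z} E_{p}(\nabla u)\bigr],\]
and apply (\ref{uniform continuity of nabla approximated Ep}), (\ref{growth estimate of nabla aproximated Ep}) and (\ref{convergence condition of nabla p-th growth term}) together with dominated convergence to deduce convergence in $L^{p/(p-1)}(\Omega,\,{\mathbb R}^{n})$. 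Passing to the limit in the displayed identity then recovers (\ref{Variational Equarity}).

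The hard part is the subgradient inclusion (\ref{Subgradient}): since $Z$ only exists as a weak-$*$ limit, pointwise information has to be extracted via some convexity device. I would use a Minty-type argument. Because $Z^{\epsilon_{k}}=\nabla\Psi^{\epsilon_{k}}(\nabla u^{\epsilon_{k}})$ and $\Psi^{\epsilon_{k}}$ is smooth and convex, for every $w\in L^{\infty}(\Omega,\,{\mathbb R}^{n})$,
\[\int_{\Omega} \Psi^{\epsilon_{k}}(w)\,dx \ge \int_{\Omega} \Psi^{\epsilon_{k}}(\nabla u^{\epsilon_{k}})\,dx + \int_{\Omega} \langle Z^{\epsilon_{k}}\mid w-\nabla u^{\epsilon_{k}}\rangle\,dx.\]
Using $\lvert\Psi^{\epsilon_{k}}(z)-\lvert z\rvert\rvert\le \epsilon_{k}$ uniformly in $z$, the strong $L^{1}$ convergence of $\nabla u^{\epsilon_{k}}$, and the weak-$*$/strong pairing against $w-\nabla u^{\epsilon_{k}}\to w-\nabla u$ in $L^{1}$, passing to the limit yields the integrated subgradient inequality
\[\int_{\Omega}\lvert w\rvert\,dx \ge \int_{\Omega}\lvert\nabla u\rvert\,dx + \int_{\Omega}\langle Z\mid w-\nabla u\rangle\,dx\quad\textrm{for all }w\in L^{\infty}(\Omega,\,{\mathbb R}^{n}).\]
Testing with $w=\nabla u+\lambda\chi_{B_{r}(x_{0})}e$ for unit vectors $e\in{\mathbb R}^{n}$ and arbitrary $\lambda\in{\mathbb R}$, and localizing at Lebesgue points as $r\to 0$, one upgrades this to the pointwise inequality $\lvert z\rvert\ge \lvert\nabla u(x)\rvert+\langle Z(x)\mid z-\nabla u(x)\rangle$ for a.e.\ $x\in\Omega$ and all $z\in{\mathbb R}^{n}$, which is exactly $Z(x)\in\partial\Psi(\nabla u(x))$. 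This completes the proposed proof.
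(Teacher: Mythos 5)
Your proposal is correct and establishes the proposition, but its treatment of the subgradient inclusion (\ref{Subgradient}) is genuinely different from the paper's. The paper proceeds more directly: having extracted a subsequence along which $\nabla u^{\epsilon}\to\nabla u$ a.e.\ and $Z^{\epsilon}\coloneqq\nabla_{z}\Psi^{\epsilon}(\nabla u^{\epsilon})\overset{\ast}{\rightharpoonup}Z$ in $L^{\infty}$, it observes that on the set $\{\nabla u\ne 0\}$ one has $Z^{\epsilon}\to\nabla u/\lvert\nabla u\rvert$ pointwise a.e., and since $\lvert Z^{\epsilon}\rvert\le 1$ this a.e.\ limit must coincide with the weak-$*$ limit there; combined with $\lVert Z\rVert_{L^{\infty}}\le1$ this yields $Z(x)\in\partial\Psi(\nabla u(x))$ a.e.\ at once, exploiting the explicit structure of $\partial\Psi$ (single-valued off the origin, the unit ball at the origin). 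Your Minty-type argument---integrating the pointwise convexity inequality for $\Psi^{\epsilon_{k}}$, passing to the limit using $\lvert\Psi^{\epsilon_{k}}(z)-\lvert z\rvert\rvert\le\epsilon_{k}$ and the weak-$*$/strong pairing, and then localizing via Lebesgue points---is a more general device (it would work for any maximal monotone graph, not just $\partial\lvert\cdot\rvert$), at the cost of some extra machinery. One small correction: you state the integrated subgradient inequality for $w\in L^{\infty}(\Omega,{\mathbb R}^{n})$ but then test with $w=\nabla u+\lambda\chi_{B_{r}(x_{0})}e$, which is in general only in $L^{p}$; the inequality in fact holds for every $w\in L^{1}(\Omega,{\mathbb R}^{n})$ (the integrands are controlled by $\epsilon_{k}+\lvert w\rvert+\lvert\nabla u^{\epsilon_{k}}\rvert$, all integrable on the bounded domain $\Omega$), so the fix is immediate, but you should state it that way. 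For the convergence $\nabla_{z}E_{p}^{\epsilon_{k}}(\nabla u^{\epsilon_{k}})\to\nabla_{z}E_{p}(\nabla u)$ in $L^{p'}$ your two-term splitting with (\ref{uniform continuity of nabla approximated Ep}), (\ref{growth estimate of nabla aproximated Ep}) and (\ref{convergence condition of nabla p-th growth term}) works; the paper reaches the same conclusion via the Arzel\`{a}--Ascoli argument for locally uniform convergence of $\nabla_{z}E_{p}^{\epsilon}$ followed by dominated convergence. Both routes are valid.
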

\begin{proof}
For each \(0<\epsilon\le 1\), we set
\[u^{\epsilon}\coloneqq \argmin\left\{F_{\Omega}^{\epsilon}(v)\mathrel{}\middle|\mathrel{} v\in u+W_{0}^{1,\,p}(\Omega)\right\}\in u+W_{0}^{1,\,p}(\Omega).\]
By Lemma \ref{strong convergence lemma} in Appendix, we have \(u^{\epsilon}\rightarrow u\) in \(W^{1,\,p}(\Omega)\), up to a subsequence. We note that
\[\left\lvert\nabla_{z}\Psi^{\epsilon}\left(\nabla u^{\epsilon}\right)\right\rvert=\displaystyle\frac{\lvert\nabla u^{\epsilon}\rvert}{\sqrt{\epsilon^{2}+\lvert\nabla u^{\epsilon}\rvert^{2}}}\le 1\quad \textrm{ a.e. in }\Omega.\]
By \cite[Corollary 3.30 and Theorem 4.9]{MR2759829}, again up to a subsequence, we may assume that
\begin{equation}\label{lp control}
\left\lvert \nabla u^{\epsilon}\right\rvert\le v \quad\textrm{ a.e. in }\Omega \textrm{ and for all } 0<\epsilon\le 1,
\end{equation}
\begin{equation}\label{a.e. convergence}
\nabla u^{\epsilon} \to \nabla u\,(\epsilon\to 0)\quad \textrm{ a.e. in }\Omega,
\end{equation}
\begin{equation}\label{weak* convergence}
\nabla_{z}\Psi^{\epsilon}\left(\nabla u^{\epsilon}\right)=\displaystyle\frac{\nabla u^{\epsilon}}{\sqrt{\epsilon^{2}+\lvert\nabla u^{\epsilon}\rvert^{2}}} \overset{\ast}{\rightharpoonup} Z\quad \textrm{ in }L^{\infty}\left(\Omega,\,{\mathbb R}^{n}\right)
\end{equation}
for some \(Z\in L^{\infty}\left(\Omega,\,{\mathbb R}^{n}\right),\,v\in L^{p}(\Omega)\). (\ref{a.e. convergence})-(\ref{weak* convergence}) imply that
\[\lVert Z\rVert_{L^{\infty}\left(\Omega,\,{\mathbb R}^{n}\right)}\le 1,\quad Z(x)=\displaystyle\frac{\nabla u(x)}{\lvert\nabla u(x)\rvert}\,\textrm{ if }\nabla u(x)\not= 0.\]
Hence \(Z\) satisfies (\ref{Subgradient}). Consider the Euler-Lagrange equation of \(u^{\epsilon}\), then we have
\begin{equation}\label{Approximated Euler-Lagrange eq in weak sense}
\int_{\Omega}\left\langle\nabla_{z}\Psi^{\epsilon}\left(\nabla u^{\epsilon}\right)\mathrel{}\middle|\mathrel{}\nabla\phi\right\rangle\,dx+\int_{\Omega}\left\langle\nabla_{z}E_{p}^{\epsilon}\left(\nabla u^{\epsilon}\right)\mathrel{}\middle|\mathrel{}\nabla\phi\right\rangle\,dx=\int_{\Omega}f\phi\,dx
\end{equation}
for all \(\phi\in W_{0}^{1,\,p}(\Omega)\). We claim that
\begin{equation}\label{Lp' strong convergence}
\nabla_{z}E_{p}^{\epsilon}\left(\nabla u^{\epsilon}\right)\rightarrow \nabla_{z}E_{p}(\nabla u)\quad \textrm{ in }L^{p^{\prime}}\left(\Omega,\,{\mathbb R}^{n}\right).
\end{equation}
From (\ref{convergence condition of nabla p-th growth term}), (\ref{uniformly bound estimate approximated Ep and nabla Ep at 0}), (\ref{uniform continuity of nabla approximated Ep}), (\ref{growth estimate of nabla aproximated Ep}) and the Arzel\`{a}-Ascoli theorem, we conclude that
\[\nabla_{z}E_{p}^{\epsilon}\rightarrow\nabla_{z}E_{p}\,(\epsilon\to 0)\quad \textrm{ compactly in }{\mathbb R}^{n}.\]
Combining this result with (\ref{a.e. convergence}), we get
\[\nabla_{z}E_{p}^{\epsilon}\left(\nabla u^{\epsilon}\right)\rightarrow \nabla_{z}E_{p}(\nabla u)\quad \textrm{ a.e. in }\Omega.\]
By (\ref{values of Ep and nabla Ep at 0}), (\ref{growth estimate of nabla Ep})-(\ref{growth estimate of nabla aproximated Ep}) and (\ref{lp control}), we obtain
\begin{align*}
\left\lvert\nabla_{z}E_{p}^{\epsilon}\left(\nabla u^{\epsilon}\right)-\nabla_{z}E_{p}(\nabla u)\right\rvert&\le \left\lvert \nabla_{z}E_{p}^{\epsilon}(0)\right\rvert+\left\lvert\nabla E_{p}(\nabla u)\right\rvert+\left\lvert\nabla_{z}E_{p}^{\epsilon}\left(\nabla u^{\epsilon}\right)-\nabla_{z}E_{p}^{\epsilon}(0)\right\rvert\\&\le 1+c_{2}\cdot C(p)\lvert\nabla u\rvert^{p-1}+c_{2}\cdot C(p)\left(1+\left\lvert\nabla u^{\epsilon}\right\rvert^{p-1}\right)\\&\le C(c_{2},\,p)\left(1+\lvert\nabla u\rvert^{p-1}+v^{p-1}\right)\in L^{p^{\prime}}(\Omega)\quad \textrm{ a.e. in }\Omega,
\end{align*}
uniformly for \(0<\epsilon\le\epsilon_{0}\).
Hence by Lebesgue's domnated convergence theorem, we conclude (\ref{Lp' strong convergence}).
From (\ref{weak* convergence})-(\ref{Lp' strong convergence}), we easily verify that \((u,\,Z)\in W^{1,\,p}(\Omega)\times L^{\infty}\left(\Omega,\,{\mathbb R}^{n}\right)\) satisfies (\ref{Variational Equarity}) for all \(\phi\in W_{0}^{1,\,p}(\Omega)\), and it completes the proof. 
\end{proof}

\subsection{Local approximation and the proof of Theorem \ref{Lipschitz bound for 1+p Laplacian}}
\label{Subsect Local approximation}
Proposition \ref{Lipschitz Bound} states a local a priori uniform Lipschitz estimate, which is proved in Section \ref{Subsect Moser's iteration} later.
\begin{proposition}\label{Lipschitz Bound}
Let \(f\in C^{\infty}(\Omega)\).
Assume that \(u^{\epsilon}\in C^{\infty}(U)\) is a classical solution to (\ref{classical smooth equation}) in \(U\Subset\Omega\). 
Under the condition (\ref{elliptic condition})-(\ref{bound condition}), we have
\begin{equation}\label{uniform Lipschitz bound}
\sup\limits_{B_{\theta R}}\,\lvert \nabla u^{\epsilon}\rvert\le \frac{C\left(n,\,p,\,q,\,C_{1},\,C_{2}\right)}{(1-\theta)^{n/p}} \left(1+\lVert f\rVert_{L^{q}(B_{R})}^{1/(p-1)}+R^{-n/p}\left\lVert\nabla u^{\epsilon}\right\rVert_{L^{p}(B_{R})}\right)
\end{equation} for any closed ball \(B_{R}\subset U\) with \(0<R\le 1\), any \(3\le n<q\le\infty\) and \(0<\theta<1\).
Even for \(n=2\), we have for each fixed \(1<\chi<2^{\ast}=\infty\),
\begin{equation}\label{uniform Lipschitz bound n=2 Moser technique}
\sup\limits_{B_{\theta R}}\,\lvert \nabla u^{\epsilon}\rvert\le \frac{C\left(p,\,q,\,\chi,\,C_{1},\,C_{2}\right)}{(1-\theta)^{\frac{2\chi}{p(\chi-1)}}} \left(1+\lVert f\rVert_{L^{q}(B_{R})}^{1/(p-1)}+R^{-2/p}\left\lVert\nabla u^{\epsilon}\right\rVert_{L^{p}(B_{R})}\right)
\end{equation}
instead of (\ref{uniform Lipschitz bound}).
\end{proposition}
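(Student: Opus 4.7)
My plan is to carry out a Moser iteration in the spirit of DiBenedetto \cite[Proposition 3.3]{MR709038}, with the crucial modification that the ellipticity bound (\ref{elliptic condition}) and the upper bound (\ref{bound condition}) are assumed only on $\{\lvert z_0\rvert\geq 1\}$, so every test function must be engineered to vanish on the facet-like region $\{\lvert\nabla u^\epsilon\rvert\leq 1\}$. Because $u^\epsilon\in C^\infty(U)$, I first differentiate the equation (\ref{classical smooth equation}) in direction $x_k$ to obtain the linearized system
\[-\partial_i\bigl[\partial^2_{z_iz_j}E^\epsilon(\nabla u^\epsilon)\,\partial^2_{jk}u^\epsilon\bigr]=\partial_k f\quad\text{in }U,\]
and introduce the excess $G_\epsilon:=(\lvert\nabla u^\epsilon\rvert^2-1)_+$, a Lipschitz function supported exactly in the regular set $\{\lvert\nabla u^\epsilon\rvert>1\}$.

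For a cutoff $\eta\in C_c^\infty(B_R)$ and an iteration exponent $s\geq 1$, I test the linearized equation with $\phi_k:=\eta^2 G_\epsilon^{2s-1}\partial_k u^\epsilon$ and sum over $k$. The right-hand side $\sum_k\int(\partial_k f)\phi_k\,dx$ is integrated by parts once to replace $\partial_k f$ by $f$ paired with $\partial_k\phi_k$, so only $\lVert f\rVert_{L^q(B_R)}$ enters on the right. Applying (\ref{elliptic condition})-(\ref{bound condition}) on $\mathrm{supp}(G_\epsilon)$ where $\epsilon^2+\lvert\nabla u^\epsilon\rvert^2\asymp\lvert\nabla u^\epsilon\rvert^2$, together with Cauchy-Schwarz and Young's inequality, should yield a Caccioppoli-type bound of the schematic form
\[\int\eta^2\lvert\nabla u^\epsilon\rvert^{p-2}G_\epsilon^{2s-1}\lvert\nabla^2u^\epsilon\rvert^2\,dx\leq Cs^2\int(\eta^2+\lvert\nabla\eta\rvert^2)\lvert\nabla u^\epsilon\rvert^p G_\epsilon^{2s}\,dx+(\text{$\lVert f\rVert_{L^q}$-terms}),\]
with $C=C(p,q,C_1,C_2)$.

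Once the left side is identified with $\int\lvert\nabla(\eta G_\epsilon^s\lvert\nabla u^\epsilon\rvert^{p/2})\rvert^2\,dx$ up to controlled lower-order error, the Sobolev embedding $W^{1,2}_0(B_R)\hookrightarrow L^{2^\ast}(B_R)$ with $2^\ast=2n/(n-2)$ (for $n\ge 3$) upgrades the Caccioppoli inequality to a reverse H\"older chain with gain factor $2^\ast/2$. A standard Moser iteration on a geometric sequence of radii $R_j\searrow\theta R$ with exponents $s_j\to\infty$, carefully tracking the $s^2$-growth of the constant, then produces (\ref{uniform Lipschitz bound}); the seed $L^p$-norm contributes $R^{-n/p}\lVert\nabla u^\epsilon\rVert_{L^p(B_R)}$, the crude bound $\lvert\nabla u^\epsilon\rvert\le 1$ on the facet produces the additive $1$, and the $f$-term contributes $\lVert f\rVert_{L^q}^{1/(p-1)}$ with the natural scaling exponent. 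For $n=2$, the Sobolev exponent $2^\ast=\infty$ is unavailable, so I instead fix $\chi\in(1,\infty)$ and apply $W^{1,2}_0(B_R)\hookrightarrow L^{2\chi}(B_R)$; the $\chi$-dependent embedding constant accounts for the prefactor $(1-\theta)^{-2\chi/(p(\chi-1))}$ in (\ref{uniform Lipschitz bound n=2 Moser technique}).

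The main obstacle I anticipate is the derivation of the Caccioppoli inequality: every term arising from $\partial_i G_\epsilon^{2s-1}=2(2s-1)G_\epsilon^{2s-2}\sum_j\partial^2_{ij}u^\epsilon\cdot\partial_j u^\epsilon$ must be bookkept so as to remain on the regular set while being absorbed into the good term on the left, and the $\beta\Psi^\epsilon$-contribution to $\partial^2_{z_iz_j}E^\epsilon$, whose tangential eigenvalue decays as $(\epsilon^2+\lvert\nabla u^\epsilon\rvert^2)^{-1/2}$, must be controlled in tandem with the $E_p^\epsilon$-part without spoiling the $s^2$-dependence that the Moser iteration can tolerate.
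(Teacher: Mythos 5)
Your overall architecture matches the paper's proof of Proposition~\ref{Lipschitz Bound}: differentiate the equation, test against functions supported in the regular set, obtain a Caccioppoli estimate, and run a Moser iteration with Sobolev exponent $2^{\ast}$ (for $n\ge 3$) or an arbitrary $2\chi$ (for $n=2$). However, the identification ``once the left side is identified with $\int\lvert\nabla(\eta G_\epsilon^s\lvert\nabla u^\epsilon\rvert^{p/2})\rvert^2$ up to controlled lower-order error'' contains a genuine gap. Your $G_\epsilon=(\lvert\nabla u^\epsilon\rvert^2-1)_+$ is not uniformly comparable to $\lvert\nabla u^\epsilon\rvert^2$ on its support: as $\lvert\nabla u^\epsilon\rvert\to 1^+$ the ratio $G_\epsilon/\lvert\nabla u^\epsilon\rvert^2$ tends to zero. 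Expanding $\nabla\bigl(G_\epsilon^s\lvert\nabla u^\epsilon\rvert^{p/2}\bigr)$ produces a term $sG_\epsilon^{s-1}\lvert\nabla u^\epsilon\rvert^{p/2}\nabla G_\epsilon$ whose square, using $\lvert\nabla G_\epsilon\rvert\lesssim\lvert\nabla u^\epsilon\rvert\lvert\nabla^2 u^\epsilon\rvert$, is of size $s^2 G_\epsilon^{2s-2}\lvert\nabla u^\epsilon\rvert^{p+2}\lvert\nabla^2 u^\epsilon\rvert^2$. Absorbing this into your schematic Caccioppoli left-hand side $\int\eta^2 G_\epsilon^{2s-1}\lvert\nabla u^\epsilon\rvert^{p-2}\lvert\nabla^2 u^\epsilon\rvert^2$ would require $\lvert\nabla u^\epsilon\rvert^4\lesssim G_\epsilon$, which fails on $\{1<\lvert\nabla u^\epsilon\rvert<2\}$, so the conversion from the Caccioppoli estimate to a reverse H\"{o}lder inequality for a single iterate breaks down. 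The paper circumvents exactly this: it iterates a single power of $w_k := k^2+\sum_i u_{i,k}^2$, where $u_{i,k}$ is the two-sided truncation of $\partial_{x_i}u^\epsilon$ at level $k$, and Lemma~\ref{Wulff potential versus parabola} together with (\ref{wk versus we}) shows $w_k\asymp k^2+\lvert\nabla u^\epsilon\rvert^2\asymp\epsilon^2+\lvert\nabla u^\epsilon\rvert^2$ on the support of the test functions, precisely because the additive $k^2$ keeps $w_k$ bounded below. The Moser iterate $v_\alpha := w_k^{(\alpha+p)/4}$ then has no vanishing set and no mismatch of powers.

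A second, smaller omission: you fix the truncation level at $1$ and expect the $f$-term to ``contribute $\lVert f\rVert_{L^q}^{1/(p-1)}$ with the natural scaling.'' The paper instead sets $k := \lVert f\rVert_{L^q(B_R)}^{1/(p-1)}+1$ from the start, so that $f_k := \lvert f\rvert^2/w_k^{p-1}$ satisfies $\lVert f_k\rVert_{L^{q/2}(B_R)}\le 1$. This is what lets the $f$-contribution in the Caccioppoli estimate be absorbed into the iteration constant via H\"{o}lder, interpolation and Sobolev, and makes $k$ (hence $\lVert f\rVert_{L^q}^{1/(p-1)}$) re-emerge automatically from $\lVert w_k\rVert_{L^{p/2}(B_R)}$ at the very end. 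With your fixed truncation at $1$, the $f$-contribution would accumulate at every step of the iteration, and recovering the clean form of (\ref{uniform Lipschitz bound}) is not a matter of bookkeeping but requires this $f$-dependent choice of truncation level (or an equivalent normalization).
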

\begin{remark}\upshape
By interpolation and \cite[Chapter V, Lemma 3.1]{MR717034}, we easily obtain
\begin{equation}\label{uniform Lipschitz bound; interpolated ver}
\sup\limits_{B_{\theta R}}\,\lvert \nabla u^{\epsilon}\rvert\le C\left(n,\,p,\,q,\,s,\,\theta,\,C_{1},\,C_{2}\right)\left(1+\lVert f\rVert_{L^{q}(B_{R})}^{1/(p-1)}+R^{-n/s}\left\lVert\nabla u^{\epsilon}\right\rVert_{L^{s}(B_{R})}\right)
\end{equation}
for any closed ball \(B_{R}\subset U\) with \(0<R\le 1\), any \(2\le n<q\le\infty\), \(0<\theta<1\) and \(1\le s<p\).
\end{remark}
In Subsection \ref{Subsect De Giorgi's truncation}, we also show a local a priori uniform  Lipschitz estimate in another way. This is weaker than (\ref{uniform Lipschitz bound})-(\ref{uniform Lipschitz bound n=2 Moser technique}) though.

For the proof of Theorem \ref{Lipschitz bound for 1+p Laplacian}, we do not use a result of the strong convergence for global minimizers, given in Lemma \ref{strong convergence lemma}. Instead, we use weaker results from Lemma \ref{strong convergence lemma} and a Fatou-type estimate proved in Lemma \ref{weak+uniform bound} in Appendix.
\begin{proof}
By the H\"{o}lder inequality, it suffices to consider the case \(n<q<\infty\).
Fix \(\theta<\tau<1\) and \(B_{\tau R}\subset U\coloneqq B_{R}^{\mathrm{o}}\Subset\Omega\). Here \(B_{R}^{\mathrm{o}}\) denotes an open ball with its radius \(R\).

We first consider \(f\in C^{\infty}(\Omega)\). 
For each \(0<\epsilon\le 1\) we set 
\[u^{\epsilon}\coloneqq \argmin\left\{F_{U}^{\epsilon}(v)\mathrel{}\middle|\mathrel{} v\in u+W^{1,\,p}(U)\right\}.\]
By (\ref{growth estimate of nabla aproximated Ep}), (\ref{coercivity of approximated Ep}) and the inequalities
\[\left\langle \nabla_{z}\Psi^{\epsilon}(z_{0})\mathrel{}\middle|\mathrel{}z_{0} \right\rangle\ge 0,\quad \left\lvert \nabla_{z}\Psi^{\epsilon}(z_{0})\right\rvert\le 1\quad\textrm{for all }z_{0}\in{\mathbb R}^{n},\]
we can use results from \cite[Chapter 7.1 and 7.4]{MR2356201} to obtain \(u^{\epsilon}\in L^{\infty}_{\textrm{loc}}(U)\).
Hence by \cite[Chapter IV, Theorem 6.4]{MR0244627}, we conclude that \(u^{\epsilon}\in C^{\infty}(U)\) and \(u^{\epsilon}\) is a classical solution to (\ref{classical smooth equation}) in \(U\) (see also \cite[Chapter V, Theorem 6.1-6.3]{MR0244627}).
We note
\[u=\argmin\left\{F_{U}(v)\mathrel{}\middle|\mathrel{} v\in u+W^{1,\,p}(U)\right\}\]
by a similar argument given in the proof of Corollary \(\ref{solution is minimizer}.\) By Lemma \ref{strong convergence lemma}, we obtain \(u^{\epsilon}\rightharpoonup u\) in \(W^{1,\,p}(U)\) as \(\epsilon\to 0\). Moreover we get (\ref{convergence of minimizer}).

We define 
\[E_{U}(v)\coloneqq \beta \lVert\nabla v\rVert_{L^{1}(U)}+\int_{U}E_{p}(\nabla v)\,dx.\]
for each \(v\in W^{1,\,p}(U)\). We easily check at once that
\begin{align}\label{equivalence}
c_{1}^{1/p}\lVert \nabla v\rVert_{L^{p}(U)} &\le \left[E_{U}(v)\right]^{1/p}\le
\left[\beta\int_{U}\lvert\nabla v\rvert\,dx+c_{2}\int_{U}\lvert\nabla v\rvert^{p}\,dx \right]^{1/p}\quad (\textrm{by (\ref{elliptic p-regular}) and }\beta>0)\nonumber\\
&\le \left[(c_{2}+1)\int_{U}\lvert\nabla v\rvert^{p}\,dx+\int_{U}\beta^{p^{\prime}}\,dx \right]^{1/p}\quad (\textrm{by the Young inequality})\nonumber\\
&\le (c_{2}+1)^{1/p}\lVert \nabla v\rVert_{L^{p}(U)}+C(n,\,p)\beta^{p^{\prime}}R^{n/p} \quad (\textrm{by the Minkowski inequality})
\end{align}
for all \(v\in W^{1,\,p}(U)\).
Also we have \(E_{U}(u)=\liminf\limits_{\epsilon\to 0}E_{U}\left(u^{\epsilon}\right)\) from (\ref{convergence of minimizer}), since
\[E_{U}(u)-\int_{U}fu\,dx=F_{U}(u)=\liminf\limits_{\epsilon\to 0}F_{U}\left(u^{\epsilon}\right)=\liminf\limits_{\epsilon\to 0}E_{U}\left(u^{\epsilon}\right)-\int_{U}fu\,dx.\]
Here we have used the compact embedding \(W^{1,\,p}(U)\hookrightarrow L^{q^{\prime}}(U)\).
Combining this fact with Proposition \ref{Lipschitz Bound}, (\ref{equivalence}), and Lemma \ref{weak+uniform bound}, we obtain
\begin{align*}
\lVert\nabla u\rVert_{L^{\infty}(B_{\theta R})}&\le C(n,\,p,\,q,\,\beta,\,c_{1},\,c_{2},\,\theta,\,\tau)\left(1+\lVert f\rVert_{L^{q}(B_{\tau R})}^{1/(p-1)}+R^{-n/p}\liminf\limits_{\epsilon\to 0}\left\lVert\nabla u^{\epsilon}\right\rVert_{L^{p}(B_{\tau R})}\right)\\&\le C(n,\,p,\,q,\,\beta,\,c_{1},\,c_{2},\,\theta,\,\tau)\left(1+\lVert f\rVert_{L^{q}(U)}^{1/(p-1)}+R^{-n/p}\left[E_{U}(u)\right]^{1/p}\right)\\&\le C(n,\,p,\,q,\,\beta,\,c_{1},\,c_{2},\,\theta,\,\tau)\left(1+\lVert f\rVert_{L^{q}(B_{R})}^{1/(p-1)}+R^{-n/p}\lVert\nabla u\rVert_{L^{p}(B_{R})}\right).
\end{align*}
Hence (\ref{main local Lipschitz estimate}) holds true for \(f\in C^{\infty}(\Omega)\).

We make a density argument to complete the proof. For \(f\in L^{q}(\Omega)\,(n<q<\infty)\), fix a sequence \(\{f_{n}\}_{n=1}^{\infty}\subset C^{\infty}(\Omega)\) such that \(f_{n}\to f\,(n\to\infty)\) in \(L^{q}(\Omega)\). We define for each \(n\in{\mathbb N}\),
\[u_{n}\coloneqq \argmin\left\{\beta\int_{\Omega}\lvert \nabla v\rvert\,dx+ \int_{\Omega}E_{p}(\nabla v)\,dx-\int_{\Omega}f_{n}v\,dx\mathrel{}\middle|\mathrel{}v\in u+W_{0}^{1,\,p}(\Omega)\right\}\in u+W^{1,\,p}(\Omega).\]
By Proposition \ref{smooth approximation of subgradient vector field}, there exists a sequence \(\{Z_{n}\}_{n=1}^{\infty}\subset L^{\infty}\left(\Omega,\,{\mathbb R}^{n}\right)\) such that \((u_{n},\,Z_{n})\in W^{1,\,p}(\Omega)\times L^{\infty}\left(\Omega,\,{\mathbb R}^{n}\right)\) is a weak solution to
\[-\beta \divx (\nabla u_{n}/\lvert \nabla u_{n}\rvert)-\divx\nabla_{z} E_{p}(\nabla u_{n})\ni f_{n}\quad  \textrm{ in } \Omega.\]
From Corollary \ref{stability of solutions}, we deduce that
\[\nabla u_{n}\rightarrow \nabla u\quad \textrm{ in }\left\{\begin{array}{cc}
L^{p}(\Omega)&(p\ge 2),\\ L^{1}(\Omega)& (1<p<2),
\end{array} \right. \textrm{ as }n\to\infty\]
For \(1<p<2\), the interpolation inequality \(\lVert \nabla (u_{n}-u_{m})\rVert_{L^{p}(U)}\le \lVert \nabla (u_{n}-u_{m})\rVert_{L^{\infty}(U)}^{1-1/p}\lVert\nabla (u_{n}-u_{m})\rVert_{L^{1}(U)}^{1/p}\) and the estimate (\ref{uniform Lipschitz bound; interpolated ver}) imply that \(\nabla u_{n}\rightarrow\nabla u\,(n\to\infty)\) in \(L^{p}(U)\). 
Again by Lemma \ref{weak+uniform bound}, we obtain
\begin{align*}
\lVert\nabla u\rVert_{L^{\infty}(B_{\theta R})}&\le C(n,\,p,\,q,\,\beta,\,c_{1},\,c_{2},\,\theta,\,\tau)\left(1+\liminf_{n\to\infty}\left[\lVert f_{n}\rVert_{L^{q}(U)}^{1/(p-1)}+R^{-n/p}\lVert\nabla u_{n}\rVert_{L^{p}(U)}\right]\right)\\&=C(n,\,p,\,q,\,\beta,\,c_{1},\,c_{2},\,\theta,\,\tau)\left(1+\lVert f\rVert_{L^{q}(U)}^{1/(p-1)}+R^{-n/p}\lVert\nabla u\rVert_{L^{p}(U)}\right).
\end{align*}
This completes the proof of (\ref{main local Lipschitz estimate}). 
\end{proof}

\section{Local a priori Lipschitz bounds}
\label{Sect Uniform local Lipschitz bound}
In Section \ref{Sect Uniform local Lipschitz bound}, we prove local a priori uniform Lipschitz estimates for classical solutions to (\ref{classical smooth equation}) with \(f\in C^{\infty}(\Omega)\).
\begin{remark}\label{Nonlinear Potential Result}\upshape
For the special case (\ref{p-laplacian case}), our result of local Lipschitz estimates (\ref{uniform Lipschitz bound}) for solutions to (\ref{Regularized Crystal Eq}) can be deduced as a special case of \cite[Theorem 1.9 and 1.11]{beck2020lipschitz}.
For (\ref{p-laplacian case}) and (\ref{Example of smooth functional}), it is easily checked that
\[c_{1}\int_{1}^{\lvert z\rvert}\left(\epsilon^{2}+s^{2}\right)^{p/2-1}s\,ds\le E^{\epsilon}(z)\quad \textrm{for all }z\in{\mathbb R}^{n}\textrm{ with }\lvert z\rvert\ge 1,\]
which is described as a coercive condition of the integrand \(E^{\epsilon}\) \cite[(1.33)]{beck2020lipschitz}.
It is also noted that 
\[\frac{C_{2}\left(\epsilon^{2}+t^{2}\right)^{p/2-1}}{C_{1}\left(\epsilon^{2}+t^{2}\right)^{p/2-1}}=\frac{c_{2}(p)+\beta}{c_{1}(p)}<\infty\quad \textrm{for all }t\ge 1\]
and thus the condition \cite[(1.34)]{beck2020lipschitz} holds true. Hence as a special case of \cite[Theorem 1.11]{beck2020lipschitz}, we obtain
\begin{align*}
& c_{1}\int_{1}^{\lVert Du^{\epsilon}\rVert_{L^{\infty}(B_{R/2})}}\left(\epsilon^{2}+s^{2}\right)^{p/2-1}s\,ds\\&\quad \le C(n,\,p,\,\beta)\left[\frac{1}{{\mathcal L}^{n}(B_{R})}\int_{B_{R}}E^{\epsilon}\left(\nabla u^{\epsilon}\right)\,dx+\lVert f\rVert_{L(n,\,1)(B_{R})}^{p/(p-1)}+\lVert f\rVert_{L(n,\,1)(B_{R})}+1+\epsilon \right]
\end{align*}
for each fixed closed ball \(B_{R}\subset \Omega\).
By the Young inequality, we can easily check that
\[(\textrm{Left Hand Side})=\frac{c_{1}}{p}\left[ \left(\epsilon^{2}+\lVert \nabla u^{\epsilon}\rVert_{L^{\infty}(B_{R/2})}^{2}\right)^{p/2} -\left(\epsilon^{2}+1\right)^{p/2}\right]\ge \frac{c_{1}}{p}\left[\lVert \nabla u^{\epsilon}\rVert_{L^{\infty}(B_{R/2})}^{p}-C(p)\right]\]
and that
\[(\textrm{Right Hand Side})\le C(n,\,p,\,\beta)\left[1+R^{-n}\lVert\nabla u^{\epsilon}\rVert_{L^{p}(B_{R})}^{p}+\lVert f\rVert_{L(n,\,1)(B_{R})}^{p/(p-1)}\right].\]
Thus we obtain (\ref{Nonlinear potential result}).
For the case \(n=2\), more sophisticated estimate than (\ref{uniform Lipschitz bound n=2 Moser technique}) can similarly be concluded as a special case of \cite[Theorem 1.11]{beck2020lipschitz}. 
Their sharp results in \cite[Theorem 1.9 and 1.11]{beck2020lipschitz} cover a variety type of elliptic equations, and they can also be applied directly to (\ref{crystal model eq}). Moreover, it will work for general equations (\ref{1+p Laplacian}) or (\ref{classical smooth equation}), as long as an integrand \(E_{p}\) or \(E_{p}^{\epsilon}\) satisfies all the conditions described in \cite[Section 1.3]{beck2020lipschitz}, including the coercive condition \cite[(1.33)]{beck2020lipschitz}.

Their proofs of local Lipschitz bounds \cite[Section 3 and 4]{beck2020lipschitz}, especially nonlinear iteration arguments in \cite[Lemma 3.1, 4.7 and 4.8]{beck2020lipschitz}, contain pointwise nonlinear potential estimates. Our proofs of a priori local Lipschitz bounds in Section \ref{Sect Uniform local Lipschitz bound}, however, does not require any potential estimate. Instead, we make iteration arguments which are rather classical and elementary.
\end{remark}
\subsection{Preliminaries for proofs of local a priori uniform Lipschitz estimates}
\label{Subsect Preliminary}
Let \(u^{\epsilon}\in C^{\infty}(U)\) be a classical solution to (\ref{classical smooth equation}) in \(U\) with \(f\in C^{\infty}(\Omega)\).
For each fixed \(0<\epsilon\le 1,\,k>0,\) and \(i\in\{\,1,\,\ldots,\,n\,\}\), we define \[u_{i,\,k}\coloneqq -\left(\partial_{x_{i}}u^{\epsilon}+k\right)_{-}+\left(\partial_{x_{i}}u^{\epsilon}-k\right)_{+}\in W^{1,\,\infty}_{\textrm{loc}}(U).\]
Here \(a_{+}\coloneqq \max\{\,a,\,0\,\},\,a_{-}\coloneqq\max\{\,-a,\,0\,\}\).
We also set \[w_{k}\coloneqq k^{2}+\sum\limits_{i=1}^{n}u_{i,\,k}^{2}\in W_{\textrm{loc}}^{1,\,\infty}(U),\quad{\hat w}_{k}\coloneqq k^{2}+\lvert\nabla u^{\epsilon}\rvert^{2}\in W_{\textrm{loc}}^{1,\,\infty}(U).\]
\cite[Proposition 3.3]{MR709038} states a local a priori \(L^{\infty}\)-\(L^{p/2}\) estimate of \({\hat w}_{\epsilon}\,(0<\epsilon\le 1)\) for classical solutions to
\[-\divx\nabla_{z} E_{p}^{\epsilon}(\nabla u^{\epsilon})=0,\] 
where \(E_{p}^{\epsilon}\) satisfies (\ref{elliptic condition for relaxed p-th growth term})-(\ref{bound condition for relaxed p-th growth term}).
The proof of Proposition \ref{Lipschitz Bound} is a modification of this one. The difference is that we should avoid an analysis for degenerate points. With this in mind, we have defined the function \(w_{k}\in W_{\textrm{loc}}^{1,\,\infty}(U)\,(k\ge 1)\) such that the support of \(\nabla w_{k}\) is contained in \(\{x\in U\mid \lvert\nabla u^{\epsilon}(x)\rvert>1\}\). We also need the compatibility of \(w_{k},\,{\hat w_{k}}\,(k\ge 1)\) and \({\hat w}_{\epsilon}\,(0<\epsilon\le 1)\) on a suitable set of regular points. In Section \ref{Subsect Preliminary}, we check this compatibility.

We first get the compatibility of \(w_{k}\) and \({\hat w}_{k}\) for \(k\ge 1\), which is described in (\ref{w versus w-hat and w-check}).
\begin{lemma}\label{Wulff potential versus parabola}
Let \(\delta_{1},\,\delta_{2}>0\). For each \(t\in\mathbb{R}\), we define
\[G_{\delta_{1},\delta_{2}}(t)\coloneqq\begin{cases}
\delta_{1}^{2}+(t-\delta_{2})^{2} & (t>\delta_{2})\\
\delta_{1}^{2} & (-\delta_{2}\le t\le \delta_{2})\\
\delta_{1}^{2}+(t+\delta_{2})^{2} & (t<-\delta_{2})
\end{cases},\quad 
{\hat G}_{\delta_{1}}(t)\coloneqq \delta_{1}^{2}+t^{2}.\]
Then there exists a constant \(K=K(\delta_{2}/\delta_{1})>1\) such that
\begin{equation}\label{g vs g-hat vs g-check}
G_{\delta_{1},\,\delta_{2}}\le {\hat G}_{\delta_{1}}\le KG_{\delta_{1},\,\delta_{2}}\quad \textrm{ in } {\mathbb R}.
\end{equation}
Hence there exists a constant \(C_{0}=C_{0}(n)>1\) such that
\begin{equation}\label{w versus w-hat and w-check}
\min\left\{\,1,\,C_{0}^{\sigma}\,\right\}\cdot w_{k}^{\sigma}\le {\hat w}_{k}^{\sigma} \le\max\left\{\,1,\,C_{0}^{\sigma}\,\right\}\cdot w_{k}^{\sigma}\quad \textrm{ in }U
\end{equation}
for all \(\sigma\in\mathbb{R}\) and \(k> 0\).
\end{lemma}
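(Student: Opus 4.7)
My plan is to prove inequality \eqref{g vs g-hat vs g-check} first by direct case analysis on $t$, then deduce \eqref{w versus w-hat and w-check} by matching up the one-dimensional functions $G_{\delta_1,\delta_2}$ and $\hat G_{\delta_1}$ with the building blocks of $w_k$ and $\hat w_k$ in each coordinate direction.

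For the first part, the lower bound $G_{\delta_1,\delta_2}(t)\le\hat G_{\delta_1}(t)$ is elementary: on $[-\delta_2,\delta_2]$ both sides differ by exactly $t^2$, while on $t>\delta_2$ (resp.\ $t<-\delta_2$) one has $0\le(t-\delta_2)^2\le t^2$ (resp.\ $0\le(t+\delta_2)^2\le t^2$). For the upper bound, I would proceed region-wise. On $|t|\le\delta_2$ the ratio $\hat G_{\delta_1}(t)/G_{\delta_1,\delta_2}(t)=(\delta_1^2+t^2)/\delta_1^2\le 1+(\delta_2/\delta_1)^2$. For $t>\delta_2$, writing $s=t-\delta_2\ge 0$ and using $2s\delta_2\le s^2+\delta_2^2$, one finds $\hat G_{\delta_1}(t)=\delta_1^2+s^2+2s\delta_2+\delta_2^2\le 2\delta_1^2+2s^2+2\delta_2^2$, and the bound $2\delta_2^2\le 2(\delta_2/\delta_1)^2\cdot\delta_1^2$ then yields $\hat G_{\delta_1}(t)\le K\,G_{\delta_1,\delta_2}(t)$ for $K=2+2(\delta_2/\delta_1)^2$; the case $t<-\delta_2$ is symmetric. (Alternatively, $\hat G_{\delta_1}/G_{\delta_1,\delta_2}$ is a continuous positive function on $\mathbb R$ tending to $1$ at $\pm\infty$, so its supremum is finite and depends only on $\delta_2/\delta_1$ by scaling.)

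For \eqref{w versus w-hat and w-check}, the key observation is that by the explicit form of $u_{i,k}$ one has pointwise
\[
k^2+u_{i,k}(x)^2=G_{k,k}\!\bigl(\partial_{x_i}u^\epsilon(x)\bigr),\qquad k^2+\bigl(\partial_{x_i}u^\epsilon(x)\bigr)^2=\hat G_{k}\!\bigl(\partial_{x_i}u^\epsilon(x)\bigr),
\]
since $u_{i,k}$ equals $\partial_{x_i}u^\epsilon-k$ on $\{\partial_{x_i}u^\epsilon>k\}$, $\partial_{x_i}u^\epsilon+k$ on $\{\partial_{x_i}u^\epsilon<-k\}$, and $0$ on $\{|\partial_{x_i}u^\epsilon|\le k\}$. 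Applying \eqref{g vs g-hat vs g-check} with $\delta_1=\delta_2=k$ (so $K$ is an absolute constant $K_0$) and summing over $i=1,\dots,n$ gives
\[
\hat w_k+(n-1)k^2=\sum_{i=1}^n\hat G_k(\partial_{x_i}u^\epsilon)\le K_0\sum_{i=1}^n G_{k,k}(\partial_{x_i}u^\epsilon)=K_0\bigl(w_k+(n-1)k^2\bigr),
\]
together with the trivial inequality $w_k\le\hat w_k$. Since $k^2\le w_k$, we obtain $w_k\le\hat w_k\le C_0(n)\,w_k$ for $C_0(n):=K_0+(K_0-1)(n-1)$.

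Finally, raising to the power $\sigma$ and splitting into $\sigma\ge 0$ (where the map $t\mapsto t^\sigma$ preserves the inequalities) and $\sigma<0$ (where it reverses them, and $C_0^\sigma<1$) delivers the claimed bounds with $\min\{1,C_0^\sigma\}$ and $\max\{1,C_0^\sigma\}$. I expect no real obstacle; the only mildly delicate point is bookkeeping the $(n-1)k^2$ error coming from the discrepancy between summing $n$ copies of the scalar inequality and the single copy of $k^2$ sitting in $w_k$ and $\hat w_k$, which is absorbed using $k^2\le w_k$.
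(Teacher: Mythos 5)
Your proof is correct. The main point worth flagging is how it differs from the paper's: the paper applies \eqref{g vs g-hat vs g-check} with $\delta_1=k/\sqrt n$ and $\delta_2=k$, which is chosen precisely so that $\sum_{i=1}^n G_{k/\sqrt n,\,k}(\partial_{x_i}u^\epsilon)=w_k$ and $\sum_{i=1}^n\hat G_{k/\sqrt n}(\partial_{x_i}u^\epsilon)=\hat w_k$ on the nose (each summand contributes $k^2/n$ to the constant term, so the $n$ copies sum to exactly the single $k^2$ in $w_k$). This gives $w_k\le\hat w_k\le K(\sqrt n)w_k$ directly, with $C_0(n)=K(\sqrt n)$ and no error term. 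You instead take $\delta_1=\delta_2=k$, so the sums equal $w_k+(n-1)k^2$ and $\hat w_k+(n-1)k^2$, and you absorb the discrepancy via $k^2\le w_k$ to land on $C_0(n)=K_0+(K_0-1)(n-1)$. Both routes are valid and yield a constant depending only on $n$; the paper's choice is the cleaner one (no absorption step, and it gets the sharp one-dimensional constant by solving the inequalities exactly), while yours is arguably more self-evidently robust since the absorption $k^2\le w_k$ makes the $n$-dependence explicit. Your crude bound $K=2+2(\delta_2/\delta_1)^2$ and the alternative compactness argument for finiteness of $K$ are both fine. The final step — raising to the power $\sigma$ and splitting on the sign of $\sigma$ — matches the paper.
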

\begin{proof}
\(0<G_{\delta_{1},\,\delta_{2}}\le{\hat G}_{\delta_{1}}\) in \(\mathbb{R}\) is clear by definition. It suffices to determine a constant \(K=K(\delta_{2}/\delta_{1})>1\) such that \({\hat G}_{\delta_{1}}\le KG_{\delta_{1},\,\delta_{2}}\) in \(\mathbb{R}\). We note that this is equivalent to
\[\left\{\begin{array}{cccc}
\delta_{1}^{2}+t^{2}&\le& K\delta_{1}^{2} & \text{ for all }t\in\lbrack0,\,\delta_{2}\rbrack,\\
\delta_{1}^{2}+t^{2}&\le& K\left(\delta_{1}^{2}+(t-\delta_{2})^{2}\right) & \text{ for all }t\in(\delta_{2},\,\infty),
\end{array}\right.\]
since \(G_{\delta_{1},\,\delta_{2}}\) and \({\hat G}_{\delta_{1}}\) are symmetric. Solve two inequalities for \(L>1\),
\[\left\{\begin{array}{ccccc}
0 &\le&  \inf\left\{L \delta_{1}^{2} -\delta_{1}^{2}-t^{2}\mathrel{}\middle|\mathrel{} 0\le t\le \delta_{2}\right\}&=&(L-1)\delta_{1}^{2}-\delta_{2}^{2},\\
0 &\le& \inf\left\{L\left(\delta_{1}^{2}+(t-\delta_{2})^{2}\right)-\left(\delta_{1}^{2}+t^{2}\right)\mathrel{}\middle|\mathrel{} t>\delta_{2}\right\}&=&(L-1)\delta_{1}^{2}-\displaystyle\frac{L}{L-1}\delta_{2}^{2},
\end{array}\right.\]
and then we obtain 
\[L\ge 1+\frac{(\delta_{2}/\delta_{1})^{2}}{2}\left(1+\sqrt{1+4(\delta_{2}/\delta_{1})^{-2}}\right)\eqqcolon K(\delta_{2}/\delta_{1}).\]
The constant \(K=K(\delta_{2}/\delta_{1})>1\) determined as above satisfies (\ref{g vs g-hat vs g-check}).

Now we set \(C_{0}(n)\coloneqq K\left(\sqrt{n}\right)>1\). We note
\[w_{k}=\sum\limits_{i=1}^{n}G_{k/\sqrt{n},\,k}\left(\partial_{x_{i}}u^{\epsilon}\right),\quad {\hat w}_{k}=\sum\limits_{i=1}^{n}{\hat G}_{k/\sqrt{n}}\left(\partial_{x_{i}}u^{\epsilon}\right)\]
by definition. Combining this fact with (\ref{g vs g-hat vs g-check}) implies that
\[w_{k}\le {\hat w}_{k}\le C_{0}w_{k} \quad \textrm{ in } U.\]
(\ref{w versus w-hat and w-check}) is an easy consequence of this result.
\end{proof}

It is easy to get
\[\frac{1}{2}\left(k^{2}+\lvert z\rvert^{2}\right)\le \epsilon^{2}+\lvert z\rvert^{2}\le k^{2}+\lvert z\rvert^{2}\]
for all \(0<\epsilon\le 1\) and \(z\in{\mathbb R}^{n}\) with \(\lvert z\rvert\ge k\ge 1\). This is clear by \(\epsilon^{2}\le 1\le k^{2}\) and 
\(k^{2}+\lvert z\rvert^{2}\le 2\lvert z\rvert^{2}\le 2\left(\epsilon^{2}+\lvert z\rvert^{2}\right)\).
Hence we obtain for all \(\sigma\in{\mathbb R}\) and \(k\ge 1\),
\begin{equation}\label{wk versus we}
\min\left\{\,1,\,2^{-\sigma} \,\right\}\cdot w_{k}^{\sigma}\le {\hat w}_{\epsilon}^{\sigma}\le \max\left\{\,1,\,2^{-\sigma} \,\right\}\cdot w_{k}^{\sigma}\quad \textrm{ in }\left\{x\in U\mathrel{}\middle|\mathrel{}\lvert\nabla u^{\epsilon}(x)\rvert>k \right\}.
\end{equation}
(\ref{wk versus we}) means the compatibility of \(w_{k}\,(k\ge 1)\) and \(w_{\epsilon}\,(0<\epsilon\le 1)\).

Throughout Section \ref{Sect Uniform local Lipschitz bound}, we fix 
\begin{equation}\label{determination of k}
k\coloneqq \lVert f\rVert_{L^{q}(B_{R})}^{1/(p-1)}+1\ge 1.
\end{equation}
and set a nonnegative function 
\[f_{k}\coloneqq\frac{\lvert f\rvert^{2}}{w_{k}^{p-1}}\le \left(\frac{\lvert f\rvert}{k^{p-1}}\right)^{2}.\]
By the definition of \(f_{k}\) and (\ref{determination of k}), it is obvious that 
\begin{equation}\label{Lq estimate of f_{k}}
\lVert f_{k}\rVert_{L^{q/2}(B_{R})}\le 1.
\end{equation}
We also define two constants \(0<\lambda\le\Lambda\) as
\begin{equation}\label{definitions of lambdas}
\lambda\coloneqq C_{1}\min\left\{\,1,\,C_{0}(n)^{p/2-1\,}\right\}\min\left\{\,1,\,2^{1-p/2}\,\right\},\,\Lambda\coloneqq C_{2}\max\left\{\,1,\,C_{0}(n)^{p/2-1}\,\right\}\max\left\{\,1,\,2^{1-p/2}\,\right\},
\end{equation}
which depend only on \(n,\,p,\,C_{1}\) and \(C_{2}\).
\subsection{Moser's iteration}
\label{Subsect Moser's iteration}
By Moser's iteration, we give a proof of Proposition \ref{Lipschitz Bound}.
\begin{proof}
Let \(n\ge 3\). We divide the proof of (\ref{uniform Lipschitz bound}) into \(2\) Steps.

\underline{Step \(1\)}. 
The aim of Step \(1\) is to prove the following Caccioppoli-type inequality.
\begin{equation}\label{Caccioppoli-type inequality for simple case}
\int_{B_{R}}\lvert\nabla (\eta v_{\alpha})\rvert^{2}\,dx \le C\left(n,\,p,\,q,\,\lambda,\,\Lambda\right)(1+\alpha)^{\beta}\int_{B_{R}}v_{\alpha}^{2}\left(\lvert\nabla\eta\rvert^{2}+\eta^{2}\right)\,dx
\end{equation}
for any \(\alpha\ge 0\) and \(\eta\in C_{c}^{1}(B_{R})\), where \(v_{\alpha}\coloneqq w_{k}^{(\alpha+p)/4}\in W^{1,\infty}_{\textrm{loc}}(\Omega)\) and \(\beta=\beta(n,\,q)\ge 2\) is a constant to be chosen later.

We prove (\ref{Caccioppoli-type inequality for simple case}) by a standard absorbing argument. 
For each fixed \(i\in\{\,1,\,\ldots,\,n\,\}\), differentiate (\ref{classical smooth equation}) with respect to \(x_{i}\). Then using integration by parts, we have
\begin{equation}\label{Euler-Lagrange 1}
\int_{B_{R}}\left\langle\nabla_{z}^{2}E^{\epsilon}\left(\nabla u^{\epsilon}\right)\nabla\partial_{x_{i}}u^{\epsilon}\mathrel{}\middle|\mathrel{}\nabla\phi\right\rangle\,dx+\int_{B_{R}}f\partial_{x_{i}}\phi\,dx=0
\end{equation}
for all \(\phi\in W_{0}^{1,\,p}(B_{R})\). We test \(\phi\coloneqq u_{i,\,k}w_{k}^{\alpha/2}\eta^{2}\in W_{0}^{1,\,p}(B_{R})\) in (\ref{Euler-Lagrange 1}). We note that \(\phi\) is supported in the superlevelset \(\left\{x\in U\mid \partial_{x_{i}}u^{\epsilon}(x)>k\right\}\), and hence we can replace \(\nabla\partial_{x_{i}}u^{\epsilon}\) by \(\nabla u_{i,\,k}\). Summing over \(i\in\{\,1,\,\ldots,\,n\,\}\), we obtain
\begin{align}\label{tested equation 1}
&\underbrace{\int_{B_{R}}w_{k}^{\alpha/2}\eta^{2}\sum\limits_{i=1}^{d}\left\langle\nabla_{z}^{2}E^{\epsilon}(\nabla u^{\epsilon})\nabla u_{i,\,k}\mathrel{}\middle|\mathrel{}\nabla u_{i,\,k} \right\rangle\,dx+\frac{1}{2}\int_{B_{R}}\eta^{2}\left\langle\nabla_{z}^{2}E^{\epsilon}(\nabla u^{\epsilon})\nabla w_{k}\mathrel{}\middle|\mathrel{}\nabla w_{k}^{\alpha/2} \right\rangle\,dx}_{\eqqcolon I_{1}}\nonumber\\&+\underbrace{\int_{B_{R}}w_{k}^{\alpha/2}\eta \left\langle\nabla_{z}^{2}E^{\epsilon}(\nabla u^{\epsilon})\nabla w_{k}\mathrel{}\middle|\mathrel{}\nabla\eta\right\rangle\,dx}_{\eqqcolon I_{2}}+\underbrace{\int_{B_{R}}f\sum\limits_{i=1}^{d}\partial_{x_{i}}\left(u_{i,\,k}w_{k}^{\alpha/2}\eta^{2}\right)\,dx}_{\eqqcolon I_{3}}=0.
\end{align}
We set an integral
\[J_{\alpha}\coloneqq \int_{B_{R}}\eta^{2}w_{k}^{(\alpha+p)/2-1}\sum\limits_{i=1}^{n}\lvert\nabla u_{i,\,k}\rvert^{2}\,dx+\frac{\alpha}{4}\int_{B_{R}}\eta^{2}w_{k}^{(\alpha+p)/2-2}\lvert\nabla w_{k}\rvert^{2}\,dx.\]
It is easy to obtain \(I_{1}\ge\lambda J_{\alpha}\) from (\ref{elliptic condition}), (\ref{w versus w-hat and w-check})-(\ref{wk versus we}) and (\ref{definitions of lambdas}). We estimate \(\lvert I_{3}\rvert\) as following;
\begin{align}
\lvert I_{3}\rvert&
\le C(n)\int_{B_{R}}\lvert f\rvert\left(\eta^{2}w_{k}^{\alpha/2}\left(\sum_{i=1}^{n}\lvert\nabla u_{i,\,k}\rvert^{2}\right)^{1/2}+\alpha\eta^{2}w_{k}^{(\alpha-1)/2}\lvert\nabla w_{k}\rvert+w_{k}^{(\alpha+1)/2}\lvert\eta\rvert\lvert\nabla\eta\rvert\right)\,dx\nonumber\\ &\quad(\textrm{by the Cauchy-Schwarz inequality})\nonumber\\&
\le \frac{\lambda}{2}J_{\alpha}+\frac{C(n)}{\lambda}(1+\alpha)\int_{B_{R}}\eta^{2}\lvert f\rvert^{2}w_{k}^{(\alpha-p)/2+1}\,dx\nonumber\\ &\quad
+C(n)\left(\frac{1}{1+\alpha}\int_{B_{R}}w_{k}^{(\alpha+p)/2}\lvert\nabla\eta\rvert^{2}\,dx+(1+\alpha)\int_{B_{R}}\eta^{2}\lvert f\rvert^{2}w_{k}^{(\alpha-p)/2+1}\,dx\right)\nonumber\\ &\quad (\textrm{by the Young inequality})\nonumber\\
&\le\frac{\lambda}{2}J_{\alpha}+C(n,\,\lambda)(1+\alpha)\left(\int_{B_{R}}v_{\alpha}^{2}\lvert\nabla\eta\rvert^{2}\,dx+\int_{B_{R}}f_{k} \eta^{2}v_{\alpha}^{2}\,dx\right).\nonumber\\& \quad (\textrm{by the definitions of }f_{k},\,v_{\alpha})\nonumber
\end{align}
From (\ref{tested equation 1}) we deduce that
\[\frac{\lambda}{2}J_{\alpha}\le\lvert I_{2}\rvert+C(n,\,\lambda)\left(\int_{B_{R}}v_{\alpha}^{2}\lvert\nabla\eta\rvert^{2}\,dx+(1+\alpha)\int_{B_{R}}f_{k}\eta^{2}v_{\alpha}^{2}\,dx\right).\]
The Cauchy-Schwarz inequality implies 
\[\lvert\nabla w_{k}\rvert^{2}=4\sum\limits_{j=1}^{n}\left(\sum\limits_{i=1}^{n}u_{i,\,k}\partial_{x_{j}}u_{i,\,k}\right)^{2}\le 4\sum\limits_{j=1}^{n}\left(\sum\limits_{i=1}^{n}u_{i,\,k}^{2}\right)\left(\sum\limits_{i=1}^{n}\lvert\partial_{x_{j}}u_{i,\,k}\rvert^{2}\right)\le 4w_{k}\sum\limits_{i=1}^{n}\lvert\nabla u_{i,\,k}\rvert^{2},\]
which leads to
\[J_{\alpha}\ge\frac{1+\alpha}{4}\int_{B_{R}}\eta^{2}w_{k}^{(\alpha+p)/2-2}\lvert\nabla w_{k}\rvert^{2}\,dx.\]
Combining this inequality with (\ref{bound condition}), (\ref{w versus w-hat and w-check})-(\ref{wk versus we}), (\ref{definitions of lambdas}) and the Young inequality, we have
\begin{align*}
\lvert I_{2}\rvert&\le \Lambda\int_{B_{R}}w_{k}^{(\alpha+p)/2-1}\lvert\eta\rvert \lvert\nabla w_{k}\rvert\lvert\nabla\eta\rvert\,dx\\&\le \frac{\lambda(1+\alpha)}{16}\int_{B_{R}}\eta^{2}w_{k}^{(\alpha+p)/2-2}\lvert\nabla w_{k}\rvert^{2}\,dx+\frac{4\Lambda^{2}}{\lambda(1+\alpha)}\int_{B_{R}}w_{k}^{(\alpha+p)/2}\lvert\nabla\eta\rvert^{2}\,dx\\&\le \frac{\lambda}{4}J_{\alpha}+\frac{4\Lambda^{2}}{\lambda}\int_{B_{R}}v_{\alpha}^{2}\lvert\nabla\eta\rvert^{2}\,dx.
\end{align*}
Therefore we obtain
\[\int_{B_{R}}\eta^{2}w_{k}^{(\alpha+p)/2-2}\lvert\nabla w_{k}\rvert^{2}\,dx\le C(n,\,\lambda,\,\Lambda)\left(\int_{B_{R}}v_{\alpha}^{2}\lvert\nabla\eta\rvert^{2}\,dx+\int_{B_{R}}f_{k}\eta^{2}v_{\alpha}^{2}\,dx\right).\]
We note that by direct calculation
\[\lvert\nabla v_{\alpha}\rvert^{2}=\frac{(\alpha+p)^{2}}{16}w_{k}^{(\alpha+p)/2-2}\lvert\nabla w_{k}\rvert^{2}\le C(p)(1+\alpha)^{2}w_{k}^{(\alpha+p)/2-2}\lvert\nabla w_{k}\rvert^{2}.\]
From this it follows that
\[\int_{B_{R}}\eta^{2}\lvert\nabla v_{\alpha}\rvert^{2}\,dx \le C(n,\,p,\,\lambda,\,\Lambda)(1+\alpha)^{2}\left(\int_{B_{R}}v_{\alpha}^{2}\lvert\nabla\eta\rvert^{2}\,dx+\int_{B_{R}}f_{k}\eta^{2}v_{\alpha}^{2}\,dx\right),\]
and hence
\begin{equation}\label{pre Caccioppoli-type estimate for simple case}
\int_{B_{R}}\lvert\nabla (\eta v_{\alpha})\rvert^{2}\,dx \le C(n,\,p,\,\lambda,\,\Lambda)(1+\alpha)^{2}\left(\int_{B_{R}}v_{\alpha}^{2}\lvert\nabla\eta\rvert^{2}\,dx+\int_{B_{R}}f_{k}\eta^{2}v_{\alpha}^{2}\,dx\right).
\end{equation}
For \(q=\infty\), it is easy to check from (\ref{Lq estimate of f_{k}}) and (\ref{pre Caccioppoli-type estimate for simple case}), that (\ref{Caccioppoli-type inequality for simple case}) is valid with \(\beta=2\).
For \(3\le n<q<\infty\), by the H\"{o}lder inequality and (\ref{Lq estimate of f_{k}}), we have
\[\int_{B_{R}}f_{k}\eta^{2}v_{\alpha}^{2}\,dx\le \lVert f_{k}\rVert_{L^{q/2}(B_{R})}\left[\int_{B_{R}}\left(\eta^{2}v_{\alpha}^{2}\right)^{\frac{2q}{q-2}}\,dx\right]^{1-2/q}\le \lVert \eta v_{\alpha} \rVert_{L^{\frac{q}{q-2}}(B_{R})}^{2}.\]
Interpolation with \(L^{2}\subset L^{\frac{2q}{q-2}}\subset L^{2^{\ast}}\) (Note that \(2<\frac{2q}{q-2}<\frac{2n}{n-2}=2^{\ast}\) since \(3\le n<q<\infty\)) and the Sobolev embedding \(W_{0}^{1,2}(B_{R})\hookrightarrow L^{2^{\ast}}(B_{R})\) imply that
\[\lVert \eta v_{\alpha} \rVert_{L^{\frac{2q}{q-2}}(B_{R})}^{2}\le \delta \int_{B_{R}}\lvert\nabla (\eta v_{\alpha})\rvert^{2}\,dx +C(n,\,q)\delta^{-\frac{n}{q-n}}\int_{B_{R}}\eta^{2}v_{\alpha}^{2}\,dx\]
for any small number \(\delta>0\). Take \(\delta=c(1+\alpha)^{-2}\) with \(c=c(n,p,q,\lambda,\Lambda)>0\) sufficiently small, then from (\ref{pre Caccioppoli-type estimate for simple case}) we obtain (\ref{Caccioppoli-type inequality for simple case}) with \(\beta\coloneqq 2q/(q-n)\ge 2\). 

\underline{Step \(2\)}. 
From (\ref{Caccioppoli-type inequality for simple case}) we prove a local \(L^{\infty}\)-\(L^{p/2}\) estimate of \(w_{k}\) by Moser's iteration.

Set \(\chi(n)\coloneqq 2^{\ast}/2=n/(n-2)\in(1,\,\infty)\). We claim a reversed H\"{o}lder inequality
\begin{equation}\label{reversed Hoelder inequality simple case}
\lVert w_{k}\rVert_{L^{\frac{(\alpha+p)}{2}\chi}(B_{\rho})}\le \left[C(n,\,p,\,q,\,\lambda,\,\Lambda)\frac{(1+\alpha)^{\beta}}{(r-\rho)^{2}}\right]^{2/(\alpha+p)}\lVert w_{k}\rVert_{L^{\frac{(\alpha+p)}{2}}(B_{r})}
\end{equation}
for all \(0<\rho<r\le R\) and \(\alpha\ge 0\). For any fixed \(0<\rho<r\le R\), we take a cutoff function \(\eta\in C_{c}^{1}(B_{r})\) such that 
\begin{equation}\label{cut-off function}
0\le \eta \le 1 \textrm{ in }B_{r},\,
\eta=1 \textrm{ in }B_{\rho}, \textrm{ and }
\lvert\nabla\eta\rvert\le\frac{2}{r-\rho} \textrm{ in }B_{r}.
\end{equation}
Then we obtain
\begin{align*}
\left(\int_{B_{\rho}}w_{k}^{(\alpha+p)\chi/2}\,dx\right)^{1/\chi}&
\le \left(\int_{B_{r}}(\eta v_{\alpha})^{2\chi}\,dx\right)^{1/\chi}\le C(n)\int_{B_{r}}\lvert\nabla (\eta v_{\alpha})\rvert^{2}\,dx\\ &
\quad \left(\textrm{by the Sobolev embedding } W_{0}^{1,\,2}(B_{r})\hookrightarrow L^{2\chi}(B_{r})=L^{2^{\ast}}(B_{r})\right)\\&
\le C(n,\,p,\,q,\,\lambda,\,\Lambda)(1+\alpha)^{\beta}\int_{B_{r}}v_{\alpha}^{2}\left(\lvert\nabla\eta\rvert^{2}+\eta^{2}\right)\,dx \quad (\textrm{by } (\ref{Caccioppoli-type inequality for simple case}))\\ &
\le C(n,\,p,\,q,\,\lambda,\,\Lambda)(1+\alpha)^{\beta} \left(\frac{1}{(r-\rho)^{2}}+1\right)\int_{B_{r}}v_{\alpha}^{2}\,dx \quad (\textrm{by } (\ref{cut-off function}))\\ &
\le C(n,\,p,\,q,\,\lambda,\,\Lambda)\frac{(1+\alpha)^{\beta}}{(r-\rho)^{2}}\int_{B_{r}}w_{k}^{(\alpha+p)/2}\,dx \quad (\textrm{note }0<r-\rho<1),
\end{align*}
which implies (\ref{reversed Hoelder inequality simple case}).

For each \(N\in{\mathbb N}\cup\{0\}\), we define
\begin{equation}\label{sequences for the Moser iteration}
\alpha_{N}\coloneqq p\left(\chi^{N}-1\right),\,\gamma_{N}\coloneqq \frac{p}{2}\chi^{N},\textrm{ and } r_{N}\coloneqq \left[\theta +2^{-N}(1-\theta) \right]R.
\end{equation}
We note \(\alpha_{N}+p=p\chi^{N}\) for each \(N\in{\mathbb N}\cup\{0\}\).
Applying (\ref{reversed Hoelder inequality simple case}) with \((\alpha,\,\rho,\,r)=(\alpha_{N},\,r_{N+1},\,r_{N})\), we have for all \(N\in{\mathbb N}\cup\{0\}\)
\begin{align*}
\lVert w_{k}\rVert_{L^{\gamma_{N+1}}\left(B_{r_{N+1}}\right)}&\le \left[C(n,\,p,\,q,\,\lambda,\,\Lambda)\frac{\left(p\chi^{N}-p+1\right)^{\beta}}{\left[2^{-(N+1)}(1-\theta)R\right]^{2}}\right]^{2/\left(p\chi^{N}\right)}\lVert w_{k}\rVert_{L^{\gamma_{N}}\left(B_{r_{N}}\right)}\nonumber\\ &\le \left[\frac{C_{\dagger}^{N}}{[(1-\theta)R]^{4/p}}\right]^{\chi^{-N}}\lVert w_{k}\rVert_{L^{\gamma_{N}}\left(B_{r_{N}}\right)}
\end{align*}
for some \(C_{\dagger}=C_{\dagger}(n,\,p,\,q,\,\lambda,\,\Lambda)>0\). By iteration we can check that for each \(N\in{\mathbb N}\),
\begin{align*}
\lVert w_{k}\rVert_{L^{\gamma_{N}}(B_{\theta R})} 
&\le \lVert w_{k}\rVert_{L^{\gamma_{N}}\left(B_{r_{N}}\right)}\\
&\le [(1-\theta)R]^{-\frac{4}{p}\sum\limits_{j=0}^{\infty}\chi^{-j}}\underbrace{\left[C_{\dagger}\right]^{\sum\limits_{j=0}^{\infty}j\chi^{-j}}}_{\eqqcolon C_{\dagger\dagger}<\infty}\lVert w_{k}\rVert_{L^{\gamma_{0}}\left(B_{r_{0}}\right)}=C_{\dagger\dagger}(n,\,p,\,q,\,\lambda,\,\Lambda)\frac{\lVert w_{k}\rVert_{L^{p/2}(B_{R})}}{[(1-\theta)R]^{2n/p}}.
\end{align*}
Letting \(N\to\infty\), we obtain
\[\lVert w_{k}\rVert_{L^{\infty}(B_{\theta R})}\le C_{\dagger\dagger}(n,\,p,\,q,\,\lambda,\,\Lambda)\frac{\lVert w_{k}\rVert_{L^{p/2}(B_{R})}}{[(1-\theta)R]^{2n/p}}.\]
Combining this result with (\ref{w versus w-hat and w-check}) and the Minkowski inequality, we have
\begin{align*}
\sup\limits_{B_{\theta R}}\,\left\lvert\nabla u^{\epsilon}\right\rvert& \le C_{0}(n)^{1/2}\lVert w_{k}\rVert_{L^{\infty}(B_{\theta R})}^{1/2}\\&
\le C(n,\,p,\,q,\,\lambda,\,\Lambda)\frac{\lVert w_{k}\rVert_{L^{p/2}(B_{R})}^{1/2}}{[(1-\theta)R]^{n/p}}=\frac{C(n,\,p,\,q,\,\lambda,\,\Lambda)}{[(1-\theta)R]^{n/p}} \left(\int_{B_{R}}\left(k^{2}+\left\lvert \nabla u^{\epsilon}\right\rvert^{2} \right)^{p/2}\,dx \right)^{1/p}\\&
\le \frac{C(n,\,p,\,q,\,\lambda,\,\Lambda)}{(1-\theta)^{n/p}}\left(k+R^{-n/p}\left\lVert \nabla u^{\epsilon}\right\rVert_{L^{p}(B_{R})}\right).
\end{align*}
Recall (\ref{determination of k}), and it completes the proof of (\ref{uniform Lipschitz bound}).

We give a proof of (\ref{uniform Lipschitz bound n=2 Moser technique}) by making modifications of arguments given in Step \(1\) and \(2\).
We note that (\ref{pre Caccioppoli-type estimate for simple case}) is valid even for \(n=2\) by the same computations, but that \(W_{0}^{1,\,2}(B_{R})\hookrightarrow L^{\infty}(B_{R})\) does not hold. 

In Step 1, for \(2<q<\infty\), fix an arbitary constant \(\frac{2q}{q-2}<\kappa<\infty\), make an interpolation with \(L^{2}\subset L^{\frac{2q}{q-2}}\subset L^{\kappa}\) and apply the Sobolev embedding \(W_{0}^{1,\,2}(B_{R})\hookrightarrow L^{\kappa}(B_{R})\). Then from (\ref{pre Caccioppoli-type estimate for simple case}), we obtain (\ref{Caccioppoli-type inequality for simple case}) for some \(\beta=\beta(q,\,\kappa)\ge 2+\frac{4}{q-2}\). For \(q=\infty\), from (\ref{Lq estimate of f_{k}}) and (\ref{pre Caccioppoli-type estimate for simple case}), we can check that (\ref{Caccioppoli-type inequality for simple case}) is valid with \(\beta=2\), similarly as \(n\ge 3\).

In Step 2, fix an arbitary \(\chi\in(1,\,\infty)\) and apply the Sobolev embedding \(W_{0}^{1,\,2}(B_{r})\hookrightarrow L^{2\chi}(B_{r})\). Then we obtain an alternative reversed H\"{o}lder inequality,
\[\lVert w_{k}\rVert_{L^{\frac{(\alpha+p)}{2}\chi}(B_{\rho})}\le \left[C(p,\,q,\,\chi,\,\lambda,\,\Lambda)\frac{(1+\alpha)^{\beta}}{(r-\rho)^{2}}r^{2/\chi}\right]^{2/(\alpha+p)}\lVert w_{k}\rVert_{L^{\frac{(\alpha+p)}{2}}(B_{r})}\]
instead of (\ref{reversed Hoelder inequality simple case}). Set \(\gamma_{N},\, r_{N}\) as in (\ref{sequences for the Moser iteration}), then we get
\begin{align*}
\lVert w_{k}\rVert_{L^{\gamma_{N+1}}\left(B_{r_{N+1}}\right)}&
\le \left[C(p,\,q,\,\chi,\,\lambda,\,\Lambda)\frac{\left(p\chi^{N}-p+1\right)^{\beta}}{\left[2^{-(N+1)}(1-\theta)R\right]^{2}}R^{2/\chi}\right]^{2/\left(p\chi^{N}\right)}\lVert w_{k}\rVert_{L^{\gamma_{N}}\left(B_{r_{N}}\right)}\\& 
\le \left[\frac{\left(C_{\dagger}(p,\,q,\,\chi,\,\lambda,\,\Lambda)\right)^{N}}{\left[(1-\theta)R\right]^{4(1-1/\chi)/p}} (1-\theta)^{-4/(p\chi)} \right]^{\chi^{-N}}\lVert w_{k}\rVert_{L^{\gamma_{N}}\left(B_{r_{N}}\right)}.
\end{align*}
By iteration we can check that for each \(N\in{\mathbb{N}}\),
\begin{align*}
\lVert w_{k}\rVert_{L^{\gamma_{N}}(B_{\theta R})} &\le\lVert w_{k}\rVert_{L^{\gamma_{N}}\left(B_{r_{N}}\right)}
\\ &\le [(1-\theta)R]^{-\frac{4}{p}\left(1-\frac{1}{\chi}\right)\sum\limits_{j=0}^{\infty}\chi^{-j}}\underbrace{\left[C_{\dagger}\right]^{\sum\limits_{j=0}^{\infty}j\chi^{-j}}}_{\eqqcolon C_{\dagger\dagger}<\infty}\lVert w_{k}\rVert_{L^{\gamma_{0}}\left(B_{r_{0}}\right)}\cdot (1-\theta)^{-\frac{4}{p\chi}\sum\limits_{j=0}^{\infty}\chi^{-j}}
\\&=C_{\dagger\dagger}(p,\,q,\,\chi,\,\lambda,\,\Lambda)\frac{\lVert w_{k}\rVert_{L^{p/2}(B_{R})}}{[(1-\theta)R]^{4/p}}\cdot (1-\theta)^{-\frac{4}{p(\chi-1)}},
\end{align*}
from which we conclude (\ref{uniform Lipschitz bound n=2 Moser technique}), similarly for \(n\ge 3\). 
\end{proof}

\begin{remark}\label{case beta=0}\upshape
Consider \(\beta=0\). Then (\ref{1+p Laplacian}) becomes
\begin{equation}\label{p-Laplacian}
\divx\nabla_{z} E_{p}(\nabla u)=f\quad \text{\rm in } \Omega.
\end{equation}
If \(u\in W^{1,\,p}(\Omega)\) is a weak solution to (\ref{p-Laplacian}), then  by making some modifications we conclude that
\begin{equation}\label{Local Lipschitz estimate for p-laplacian}
\lVert\nabla u\rVert_{L^{\infty}(B_{\theta R})}\le C(n,\,p,\,q,\,c_{1},\,c_{2},\,\theta)\left(\lVert f\rVert_{L^{q}(B_{R})}^{1/(p-1)}+R^{-n/p}\lVert\nabla u\rVert_{L^{p}(B_{R})}\right)
\end{equation}
for any fixed closed ball \(B_{R}\subset \Omega\) with its radius \(0<R\le 1\), any \(2\le n<q\le\infty\) and \(0<\theta<1\),
instead of (\ref{main local Lipschitz estimate}). Estimates of the type (\ref{Local Lipschitz estimate for p-laplacian}) can be seen in \cite[Theorem 1.15]{beck2020lipschitz} for solutions to uniformly elliptic systems. Compared with the proof of \cite[Theorem 1.15]{beck2020lipschitz}, our proof of (\ref{Local Lipschitz estimate for p-laplacian}) is rather direct.
\end{remark}
We give a sketch of the proof of (\ref{Local Lipschitz estimate for p-laplacian}). 
\begin{proof}
We claim the following a priori estimate.
\begin{equation}\label{Perturbation a priori estimate}
\sup\limits_{B_{\theta R}}\,\lvert \nabla u^{\epsilon}\rvert\le C\left(n,\,p,\,q,\,c_{1},\,c_{2},\,\theta\right)\left(\delta+\lVert f\rVert_{L^{q}(B_{R})}^{1/(p-1)}+R^{-n/p}\left\lVert\nabla u^{\epsilon}\right\rVert_{L^{p}(B_{R})}\right)
\end{equation}
for any \(0<\epsilon<\delta<1\), any closed ball \(B_{R}\subset U\) with \(0<R\le 1\), any \(3\le n<q\le\infty\) and \(0<\theta<1\). Here \(u^{\epsilon}\in C^{\infty}(U)\) is a classical solution to
\[-\divx \nabla_{z}E_{p}^{\epsilon}\left(\nabla u^{\epsilon}\right)=f\in C^{\infty}(\Omega)\quad \textrm{ in }U\Subset \Omega.\]
For each fixed \(0<\delta<1\), we set 
\[k\coloneqq \delta+\lVert f\rVert_{L^{q}(B_{R})}^{1/(p-1)}\ge \delta,\quad f_{k}\coloneqq\frac{\lvert f\rvert^{2}}{w_{k}^{p-1}}\le \left(\frac{\lvert f\rvert}{k^{p-1}}\right)^{2}.\]
We note that (\ref{wk versus we}) is valid for all \(\sigma\in{\mathbb R}\) and \(0<\epsilon\le\delta\le k\).
Using (\ref{elliptic condition for relaxed p-th growth term})-(\ref{bound condition for relaxed p-th growth term}), (\ref{w versus w-hat and w-check})-(\ref{wk versus we}) and (\ref{definitions of lambdas}), we deduce (\ref{Perturbation a priori estimate}), as in the proof of Proposition \ref{Lipschitz Bound}.
Recalling all the proofs, we can easily check that Corollary \ref{solution is minimizer}-\ref{stability of solutions} and Lemma \ref{lowersemicontinuity of F}, \ref{strong convergence lemma} are valid even for \(\beta=0\). As in the proof of Theorem \ref{Lipschitz bound for 1+p Laplacian}, we conclude from (\ref{Perturbation a priori estimate}) that
\[\lVert\nabla u\rVert_{L^{\infty}(B_{\theta R})}\le C(n,\,p,\,q,\,c_{1},\,c_{2},\,\theta)\left(\delta+\lVert f\rVert_{L^{q}(B_{R})}^{1/(p-1)}+R^{-n/p}\lVert\nabla u\rVert_{L^{p}(B_{R})}\right)\]
for all \(f\in L^{q}(\Omega)\,(n<q\le\infty)\) and for any fixed \(0<\delta<1\), from which we obtain (\ref{Local Lipschitz estimate for p-laplacian}).
\end{proof}
\begin{remark}\label{How to modify Kruegel's proof}\upshape
The significant difference between the proof of Proposition \ref{Lipschitz Bound} and that of \cite[Lemma 4.9]{krugel2013variational} is that different test functions are chosen. For simplicity, let \(f=\const\eqqcolon a\). In Kr\"{u}gel's essential proof, we test \(\phi\coloneqq \left(u_{i,\,1}\right)_{+}\left[w_{1}^{+}\right]^{\alpha/2}\eta^{2}\in W_{0}^{1,\,p}(B_{R})\) or \(\phi\coloneqq -\left(u_{i,\,1}\right)_{-}\left[w_{1}^{-}\right]^{\alpha/2}\eta^{2}\in W_{0}^{1,\,p}(B_{R})\) in (\ref{Euler-Lagrange 1}). Here \(\alpha\ge 0\), and the functions \(w_{1}^{+},\,w_{1}^{-}\) are defined as
\[w_{1}^{+}\coloneqq 1+\sum\limits_{i=1}^{n}\left(u_{i,\,1}\right)_{+}^{2}\in W_{\textrm{loc}}^{1,\,\infty}(U)\quad \textrm{and} \quad w_{1}^{-}\coloneqq 1+\sum\limits_{i=1}^{n}\left(u_{i,\,1}\right)_{-}^{2}\in W_{\textrm{loc}}^{1,\,\infty}(U).\]
From this, we make a similar absorbing argument for alternative integrals
\[J_{\alpha}^{\pm}\coloneqq \int_{B_{R}}\eta^{2}{\hat w}_{1}^{p/2-1}\left[ w_{1}^{\pm}\right]^{\alpha/2}\sum\limits_{i=1}^{n}\left\lvert\nabla (u_{i,\,1})_{\pm}\right\rvert^{2}\,dx+\frac{\alpha}{4}\int_{B_{R}}\eta^{2}{\hat w}_{1}^{p/2-1}\left[w_{1}^{\pm}\right]^{\alpha/2-1}\left\lvert\nabla w_{1}^{\pm}\right\rvert^{2}\,dx.\]
Then, as in Step 1 of the proof of Proposition \ref{Lipschitz Bound}, we obtain 
\begin{equation}\label{Correct Kruegel estimate}
\int_{B_{R}}\eta^{2}{\hat w}_{1}^{p/2-1}\left(w_{1}^{\pm}\right)^{\alpha/2} \left\lvert\nabla w_{1}^{\pm}\right\rvert^{2}\,dx\le C(a,\,n,\,p,\,C_{1},\,C_{2})\int_{B_{R}}{\hat w}_{1}^{p/2}\left[w_{1}^{\pm}\right]^{\alpha/2}\left(\eta^{2}+\lvert\nabla\eta\rvert^{2}\right)\,dx
\end{equation}
from (\ref{elliptic condition})-(\ref{bound condition}) and (\ref{wk versus we}). Here we note that (\ref{w versus w-hat and w-check}) is not used. Kr\"{u}gel claims that
\begin{equation}\label{Kruegel estimate}
\int_{B_{R}}\eta^{2}\left(w_{1}^{\pm}\right)^{(p+\alpha)/2-2}\left\lvert\nabla w_{1}^{\pm} \right\rvert^{2}\,dx\le C(a,\,n,\,p,\,C_{1},\,C_{2})\int_{B_{R}}\left[w_{1}^{\pm}\right]^{(\alpha+p)/2}\left(\eta^{2}+\lvert\nabla\eta\rvert^{2}\right)\,dx
\end{equation}
by (\ref{Correct Kruegel estimate}). From (\ref{Kruegel estimate}) we conclude that
\[\int_{B_{R}}\lvert\nabla (\eta v_{\alpha})\rvert^{2}\,dx \le C(a,\,n,\,p,\,C_{1},\,C_{2})(1+\alpha)^{2}\int_{B_{R}}v_{\alpha}^{2}\left(\lvert\nabla\eta\rvert^{2}+\eta^{2}\right)\,dx,\quad\textrm{where}\quad v_{\alpha}\coloneqq w_{1}^{(\alpha+p)/4}\]
instead of (\ref{pre Caccioppoli-type estimate for simple case}), since we easily obtain
\[\int_{B_{R}}\eta^{2}w_{1}^{(\alpha+p)/2-2}\left\lvert\nabla w_{1}\right\rvert^{2}\,dx \le C(a,\,n,\,p,\,C_{1},\,C_{2})\int_{B_{R}} w_{1}^{(\alpha+p)/2}\left(\eta^{2}+\lvert\nabla\eta\rvert^{2}\right)\,dx.\]
The rest of Kr\"{u}gel's proof is very similar to that of Proposition \ref{Lipschitz Bound}. Hence it suffices to prove (\ref{Kruegel estimate}) from (\ref{Correct Kruegel estimate}). The problem is, however, that neither \(w_{k}^{+}\) nor \(w_{k}^{-}\) is compatible with \({\hat w}_{k}\). That is, though it is clear that \(w_{k}^{\pm}\le {\hat w}_{k}\) in \(U\), there does not exist \(C=C(n)>1\) such that \({\hat w}_{k}\le Cw_{k}^{\pm}\) in \(U\). This makes it difficult to obtain (\ref{Kruegel estimate}) from (\ref{Correct Kruegel estimate}), if \(p/2-1<0\), i.e. \(1<p<2\). We overcome this problem by taking other suitable test functions carefully.
\end{remark}

\subsection{De Giorgi's truncation}
\label{Subsect De Giorgi's truncation}
By De Giorgi's truncation, it is possible to obtain another local a priori uniform Lipschitz estimate, which is much rougher than the results in Proposition \ref{Lipschitz Bound}. More general and sophisticated estimate via De Giorgi's truncation can be seen in the recent work by Beck and Mingione \cite[Section 4.2]{beck2020lipschitz}, while our approach is rather classical and elementary.
\begin{proposition}\label{De Giorgi's truncation proof}
Let \(f\in C^{\infty}(\Omega)\).
Assume that \(u^{\epsilon}\in C^{\infty}(U)\) is a classical solution to (\ref{classical smooth equation}) in \(U\Subset\Omega\).
Under the condition (\ref{elliptic condition}) and (\ref{bound condition}), we have
\begin{equation}\label{uniform Lipschitz bound, weaker}
\sup\limits_{B_{\theta R}}\,\lvert \nabla u^{\epsilon}\rvert\le \frac{C\left(n,\,p,\,q,\,C_{1},\,C_{2}\right)}{[(1-\theta)R]^{g}} \left( R^{n/p}\left[1+\lVert f\rVert_{L^{q}(B_{R})}^{1/(p-1)}\right]+\left\lVert\nabla u^{\epsilon}\right\rVert_{L^{p}(B_{R})}\right)
\end{equation} for any closed ball \(B_{R}\subset \Omega\) with \(0<R\le 1\), any \(2\le n<q\le\infty\) and \(0<\theta<1\). Here \(g=g(n,\,p,\,q)\ge n/p\) is a constant.
\end{proposition}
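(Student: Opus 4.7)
The proof adapts the classical De Giorgi truncation method to the auxiliary function \(w_k\) introduced in Section \ref{Subsect Preliminary}, following the template of \cite[Theorem 4.1, Method 1]{MR2777537}. I would retain the choice \(k = \lVert f\rVert_{L^q(B_R)}^{1/(p-1)} + 1\) from (\ref{determination of k}) so that on every superlevel set \(A(L,\rho) := B_\rho \cap \{w_k > L\}\) with \(L \geq k^2\) the compatibility estimates (\ref{w versus w-hat and w-check})--(\ref{wk versus we}) apply and the forcing term is controlled through (\ref{Lq estimate of f_{k}}), uniformly in \(\epsilon\).

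The first step is to derive a Caccioppoli-type inequality on these superlevel sets. Starting from the differentiated equation (\ref{Euler-Lagrange 1}), I would test with \(\phi_i := \eta^2 u_{i,k}(w_k - L)_+\) for \(0 < \rho < r \leq R\) and \(\eta \in C_c^1(B_r)\) a standard cutoff satisfying (\ref{cut-off function}). Summing over \(i\) and using the ellipticity (\ref{elliptic condition}), the growth bound (\ref{bound condition}) and the identity \(\nabla w_k = 2\sum_i u_{i,k}\nabla u_{i,k}\), a standard absorption by Cauchy--Schwarz and Young yields an estimate of the form
\begin{equation*}
\int_{A(L,\rho)} w_k^{p/2-1}\lvert\nabla(w_k - L)_+\rvert^2\,dx \leq \frac{C(n,p,C_1,C_2)}{(r-\rho)^2}\int_{A(L,r)} (w_k - L)_+^2\, w_k^{p/2-1}\,dx,
\end{equation*}
once the \(f\)-contribution is absorbed via (\ref{Lq estimate of f_{k}}) and the H\"older inequality with exponent \(q/2\). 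Setting \(V := (w_k - L)_+\, w_k^{(p-2)/4}\) and applying the Sobolev embedding \(W_0^{1,2}(B_r) \hookrightarrow L^{2^*}(B_r)\) for \(n \geq 3\) (and \(W_0^{1,2}(B_r) \hookrightarrow L^{\kappa}(B_r)\) for large \(\kappa\) when \(n = 2\)) converts this into a gain of integrability of \(V\) between concentric balls \(B_\rho \subset B_r\).

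The second step is the iteration. For \(h > L \geq k^2\) the Chebyshev-type bound \((h - L)^2\, L^{(p-2)/2} \lvert A(h,\rho)\rvert \leq \int_{A(L,\rho)} V^2\,dx\), combined with the Sobolev gain from Step 1, produces a nonlinear recursion
\begin{equation*}
\Phi(h,\rho) \leq \frac{C(n,p,q,C_1,C_2)}{(h - L)^{\alpha}(r - \rho)^{\gamma}}\, \Phi(L,r)^{1+\delta}
\end{equation*}
for the nonincreasing quantity \(\Phi(L,\rho) := \int_{A(L,\rho)} V^2\,dx\) and exponents \(\alpha,\gamma,\delta > 0\) depending only on \(n, p, q\). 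The classical De Giorgi iteration lemma \cite[Chapter V, Lemma 3.1]{MR717034}, applied on the pair of radii \(\theta R < \tau R\) with a fixed \(\tau \in (\theta,1)\), then forces \(\Phi(L_0,\theta R) = 0\) for an explicit threshold \(L_0\) controlled by \(R^{-n/p}\lVert \nabla u^\epsilon\rVert_{L^p(B_R)}\) together with \(k\), each weighted by a negative power of \((1-\theta)R\). Unwinding (\ref{w versus w-hat and w-check}) and (\ref{determination of k}) converts this into (\ref{uniform Lipschitz bound, weaker}) with an exponent \(g = g(n,p,q) \geq n/p\).

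The main obstacle is the bookkeeping of the exponent \(g\): the iteration lemma charges a negative power of the level gap in addition to the radius gap, so \(g\) is necessarily larger than the \(n/p\) produced by Moser's iteration in Proposition \ref{Lipschitz Bound}. A secondary technicality is the simultaneous absorption of the \(f\)-term uniformly in \(L\); this is enabled by taking the starting level of order \(k^2\), so that the H\"older exponent \(q/2 > n/2 \geq 1\) couples correctly with the Sobolev gain and the factor \(L^{(p-2)/2}\) appearing in the Chebyshev bound does not spoil the final scaling, in particular for the singular regime \(1 < p < 2\).
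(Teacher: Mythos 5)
Your plan follows the same broad template as the paper's proof (De Giorgi truncation on superlevel sets of $w_k$, iteration, interpolation), but there are several gaps in the proposed details.

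First, the truncation variable differs. The paper sets $V_l \coloneqq (w_k^{p/2} - l)_+$ directly, so that on $\{w_k^{p/2}>l\}$ one has $\nabla V_l = \tfrac{p}{2}w_k^{p/2-1}\nabla w_k$ and the Chebyshev estimate $\mathcal{L}^n(A(L,r))\le (L-l)^{-2}\int_{A(l,r)}V_l^2\,dx$ is clean. With your choice $V\coloneqq (w_k-L)_+\,w_k^{(p-2)/4}$, the infimum of $V^2$ on $\{w_k>h\}$ is $(h-L)^2 h^{(p-2)/2}$, \emph{not} $(h-L)^2 L^{(p-2)/2}$; for $1<p<2$ those differ in the wrong direction, so the Chebyshev inequality you wrote is false in the singular regime you explicitly want to cover. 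This is fixable by switching to the paper's truncation, but it is not the innocuous ``bookkeeping'' you suggest.

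Second, and more seriously, the Caccioppoli inequality you state omits the contribution of $f$, claiming it is ``absorbed'' via (\ref{Lq estimate of f_{k}}) and H\"older. It cannot be fully absorbed. After H\"older and Sobolev, the three pieces $\int \eta^2 f_k V_l^2$, $l\int\eta^2 f_kV_l$ and $l^2\int\eta^2 f_k$ produce (i) a term with a factor $\mathcal{L}^n(A(l,r))^{2/n-2/q}$ that can only be absorbed into $\int|\nabla(\eta V_l)|^2$ if $\mathcal{L}^n(A(l,r))$ is below a data-dependent threshold — which is why the paper is forced to restrict to $l\ge l_0 = C_*\|V_0\|_{L^2(B_R)}$; and (ii) an irreducible source term of order $l^2\mathcal{L}^n(A(l,r))^{1+\gamma}$. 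The resulting inequality is the De Giorgi class estimate (\ref{De Giorgi class estimate}), not a pure Caccioppoli inequality; the iteration must carry that source, which is where the constant in front of $\lVert f\rVert_{L^q}^{1/(p-1)}$ in (\ref{uniform Lipschitz bound, weaker}) ultimately comes from. Your proposal silently assumes this structure away.

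Third, the citation is the wrong lemma: \cite[Chapter V, Lemma 3.1]{MR717034} is the absorbing/interpolation lemma that the paper invokes in Step 3 to improve $\lVert V_0\rVert_{L^2(B_R)}$ to $\lVert V_0\rVert_{L^1(B_R)}$. The geometric De Giorgi iteration lemma needed in Step 2 is \cite[Chapter 2, Lemma 4.7]{MR0244627}. Relatedly, your sketch stops at ``$\Phi(L_0,\theta R)=0$ for $L_0$ controlled by $R^{-n/p}\lVert\nabla u^\epsilon\rVert_{L^p(B_R)}$'', but the iteration only yields a threshold in terms of $\lVert V_0\rVert_{L^2(B_R)}\sim \bigl(\int(k^2+|\nabla u^\epsilon|^2)^p\bigr)^{1/2}$, which is \emph{not} controlled by $\lVert\nabla u^\epsilon\rVert_{L^p(B_R)}$. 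You need the interpolation $\lVert V_0\rVert_{L^2}\le\lVert V_0\rVert_{L^1}^{1/2}\lVert V_0\rVert_{L^\infty}^{1/2}$ followed by the absorbing lemma (the paper's Step 3) to land on the $L^p$-norm of $\nabla u^\epsilon$. Without this step the conclusion (\ref{uniform Lipschitz bound, weaker}) does not follow.
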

\begin{proof}
We divide the proof into 3 Steps.

\underline{Step 1}. 
We set \(V_{l}\coloneqq \left(w_{k}^{p/2}-l\right)_{+}\in W_{\textrm{loc}}^{1,\,\infty}(\Omega)\) and \(A(l,\,r)\coloneqq \left\{x\in B_{r}\mathrel{} \middle| \mathrel{} w_{k}(x)>l^{2/p}\right\}\) for \(l\ge 0\) and \(0<r\le R\).
The aim of Step 1 is to prove that there exists a constant \(\gamma=\gamma(n,\,q)\in (0,\,2/n\rbrack\) such that
\begin{equation}\label{De Giorgi class estimate}
\int_{A(l,\,r)}(\eta V_{l})^{2}\,dx\le C(n,\,p,\,q,\,\lambda,\,\Lambda)\left( {\mathcal L}^{n}\left(A(l,\,r)\right)^{\gamma} \int_{A(l,\,r)}V_{l}^{2}\lvert\nabla \eta\rvert^{2}\,dx +l^{2}{\mathcal L}^{n}\left(A(l,\,r)\right)^{1+\gamma} \right)
\end{equation}
for all \(0<r<R\), \(\eta\in C_{c}^{1}(B_{r},\,\lbrack 0,\,1\rbrack)\) and \(l\ge l_{0}\coloneqq C_{\ast}\lVert V_{0}\rVert_{L^{2}(B_{R})}\).
Here \({\mathcal L}^{n}\) denotes \(n\)-dimensional Lebesgue measure, and \(C_{\ast}=C_{\ast}(n,\,p,\,q,\,\lambda,\,\Lambda)>0\) is a constant which is chosen later.

We test \(\phi\coloneqq u_{i,\,k}V_{l}\eta^{2}\in W_{0}^{1,\,p}(B_{R})\) in (\ref{Euler-Lagrange 1}). We note that all integrals range over the superlevelset \(\left\{x\in U\mid \partial_{x_{i}}u^{\epsilon}(x)>k\right\}\cap A(l,\,r)\) and therefore we may replace \(\nabla\partial_{x_{i}}u^{\epsilon},\,\nabla w_{k}\) by \(\nabla u_{i,\,k},\,\nabla \left(w_{k}-l^{2/p}\right)_{+}\) respectively. By summing over \(i\in\{\,1,\,\ldots,\,n\,\}\), we obtain 
\begin{align}\label{tested equation 2}
&\underbrace{\int_{A(l,\,r)}\eta^{2}V_{l}\sum\limits_{i=1}^{d}\left\langle\nabla_{z}^{2}E^{\epsilon}(\nabla u^{\epsilon})\nabla u_{i,\,k}\mathrel{}\middle|\mathrel{}\nabla u_{i,\,k} \right\rangle\,dx+\frac{1}{2}\int_{A(l,\,r)}\eta^{2}\left\langle\nabla_{z}^{2}E^{\epsilon}(\nabla u^{\epsilon})\nabla \left(w_{k}-l^{2/p}\right)_{+}\mathrel{}\middle|\mathrel{}\nabla V_{l} \right\rangle\,dx}_{\eqqcolon I_{1}}\nonumber\\
&+\underbrace{\int_{A(l,\,r)}\eta V_{l} \left\langle\nabla_{z}^{2}E^{\epsilon}(\nabla u^{\epsilon})\nabla \left(w_{k}-l^{2/p}\right)_{+}\mathrel{}\middle|\mathrel{}\nabla\eta\right\rangle\,dx}_{\eqqcolon I_{2}}+\underbrace{\int_{A(l,\,r)}f\sum\limits_{i=1}^{d}\partial_{x_{i}}\left(\eta^{2}u_{i,\,k}V_{l}\right)\,dx}_{\eqqcolon I_{3}}=0.
\end{align}
We set an integral
\begin{equation}\label{definition of local energy functional J}
J\coloneqq \int_{A(l,\,r)}\eta^{2}w_{k}^{p/2-1}V_{l}\sum\limits_{i=1}^{n}\lvert\nabla u_{i,\,k}\rvert^{2}\,dx+\frac{p}{4}\int_{A(l,\,r)}\eta^{2}w_{k}^{p-2}\left\lvert\nabla \left(w_{k}-l^{2/p}\right)_{+}\right\rvert^{2}\,dx.
\end{equation}
\(I_{1}\ge\lambda J\) is easily obtained from (\ref{elliptic condition}), (\ref{w versus w-hat and w-check})-(\ref{wk versus we}) and (\ref{definitions of lambdas}).
We note that 
\[w_{k}^{p/2}=V_{l}+l \textrm{ on } A(l,\,r), \textrm{ and hence } w_{k}^{p}\le 2\left(V_{l}^{2}+l^{2}\right)\textrm{ on } A(l,\,r).\]
With this in mind, we obtain
\begin{align}
\lvert I_{3}\rvert&
\le C(n)\int_{A(l,\,r)}\lvert f\rvert\left(\eta^{2}V_{l}\left(\sum_{i=1}^{n}\lvert\nabla u_{i,\,k}\rvert^{2}\right)^{1/2}+p\eta^{2}w_{k}^{(p-1)/2}\left\lvert\nabla \left(w_{k}-l^{2/p}\right)_{+}\right\rvert+V_{l}w_{k}^{1/2}\eta\lvert\nabla\eta\rvert\right)\,dx\nonumber\\ &\quad(\textrm{by the Cauchy-Schwarz inequality})\nonumber\\&
\le \frac{\lambda}{2}J+\frac{C(n)}{\lambda}\left( \int_{A(l,\,r)}\eta^{2}w_{k}^{1-p/2}\lvert f\rvert^{2}V_{l}\,dx+p\int_{A(l,\,r)}\eta^{2}w_{k}\lvert f\rvert^{2}\,dx \right)\nonumber\\ &\quad
+\frac{C(n)}{2}\left(\int_{A(l,\,r)}V_{l}^{2}\lvert\nabla\eta\rvert^{2}\,dx+\int_{A(l,\,r)}\eta^{2}w_{k}\lvert f\rvert^{2}\,dx\right)\nonumber\\ &\quad (\textrm{by the Young inequality})\nonumber\\
&\le\frac{\lambda}{2}J+C(n,\,p,\,\lambda)\left(\int_{A(l,\,r)}V_{l}^{2}\lvert\nabla\eta\rvert^{2}\,dx+\int_{A(l,\,r)}\eta^{2}f_{k}\left(w_{k}^{p}+V_{l}w_{k}^{p/2}\right)\,dx\right)\nonumber\\& \quad (\textrm{by the definition of }f_{k})\nonumber\\ 
&\le \frac{\lambda}{2}J+C(n,\,p,\,\lambda) \left(\int_{A(l,\,r)}V_{l}^{2}\lvert\nabla\eta\rvert^{2}\,dx+\int_{A(l,\,r)}\eta^{2}f_{k}\left(V_{l}^{2}+lV_{l}+l^{2}\right)\,dx\right).
\nonumber
\end{align}
From this and (\ref{tested equation 2}), we deduce that 
\[\frac{\lambda}{2}J\le \lvert I_{2}\rvert +C(n,\,p,\,\lambda)\left(\int_{A(l,\,r)}V_{l}^{2}\lvert\nabla\eta\rvert^{2}\,dx+\int_{A(l,\,r)}\eta^{2}f_{k}\left(V_{l}^{2}+lV_{l}+l^{2}\right)\,dx\right).\]
By dropping the first term in (\ref{definition of local energy functional J}), we have
\[J\ge \frac{p}{4}\int_{A(l,\,r)}\eta^{2}w_{k}^{p-2}\left\lvert\nabla \left(w_{k}-l^{2/p}\right)_{+}\right\rvert^{2}\,dx =\frac{1}{p}\int_{A(l,\,r)}\eta^{2}\lvert \nabla V_{l}\rvert^{2}\,dx.\]
By (\ref{bound condition}), (\ref{w versus w-hat and w-check})-(\ref{wk versus we}), (\ref{definitions of lambdas}) and the Young inequality, we obtain
\begin{align*}
\lvert I_{2}\rvert&\le \Lambda\int_{A(l,\,r)}\eta\lvert\nabla\eta\rvert V_{l}w_{k}^{p/2-1}\left\lvert\nabla \left(w_{k}-l^{2/p}\right)_{+}\right\rvert\,dx
=\frac{2\Lambda}{p}\int_{A(l,\,r)}\eta V_{l}\lvert\nabla\eta\rvert \lvert\nabla V_{l}\rvert\,dx\\
&\le \frac{\lambda}{4p}\int_{A(l,\,r)}\eta^{2}\lvert\nabla V_{l}\rvert^{2}\,dx +\frac{4\Lambda^{2}}{\lambda p}\int_{A(l,\,r)}V_{l}^{2}\lvert\nabla\eta\rvert^{2}\,dx.
\end{align*}
From these it follows that
\[\int_{A(l,\,r)}\eta^{2}\lvert\nabla V_{l}\rvert^{2}\,dx\le C(n,\,p,\,\lambda,\,\Lambda) \left(\int_{A(l,\,r)}V_{l}^{2}\lvert\nabla\eta\rvert^{2}\,dx+\int_{A(l,\,r)}\eta^{2}f_{k}\left(V_{l}^{2}+lV_{l}+l^{2}\right)\,dx\right),\]
and hence
\begin{equation}\label{pre estimate De giorgi 1}
\int_{A(l,\,r)}\lvert\nabla (\eta V_{l})\rvert^{2}\,dx\le C(n,\,p,\,\lambda,\,\Lambda) \left(\int_{A(l,\,r)}V_{l}^{2}\lvert\nabla\eta\rvert^{2}\,dx+\int_{A(l,\,r)}\eta^{2}f_{k}\left(V_{l}^{2}+lV_{l}+l^{2}\right)\,dx\right).
\end{equation}
From (\ref{pre estimate De giorgi 1}) we verify that (\ref{De Giorgi class estimate}) is valid. We first consider \(n\ge 3\). By \(0\le \eta\le 1\), (\ref{Lq estimate of f_{k}}), the H\"{o}lder inequality and the Sobolev embedding \(W_{0}^{1,\,2}(B_{r})\hookrightarrow L^{2^{\ast}}(B_{r})\), we obtain 
\[\int_{A(l,\,r)}\eta^{2}f_{k}\,dx\le \lVert f_{k}\rVert_{L^{q/2}(B_{r})} {\mathcal L}^{n}(A(l,\,r))^{1-2/q}\le {\mathcal L}^{n}(A(l,\,r))^{1-2/q},\]
\begin{align*}
\int_{A(l,\,r)}\eta^{2}f_{k}V_{l}^{2}\,dx&\le \lVert f_{k}\rVert_{L^{q/2}(B_{r})}\left(\int_{B_{r}} (\eta V_{l})^{2^{\ast}}\right)^{2/2^{\ast}}{\mathcal L}^{n}(A(l,\,r))^{2/n-2/q}\\
&\le C(n){\mathcal L}^{n}(A(l,\,r))^{2/n-2/q} \int_{A(l,\,r)}\lvert\nabla(\eta V_{l})\rvert^{2}\,dx,
\end{align*}
\begin{align*}
l\int_{A(l,\,r)}\eta^{2}f_{k}V_{l}\,dx&\le l\lVert f_{k}\rVert_{L^{q/2}(B_{r})} C(n)\left(\int_{B_{r}} \lvert\nabla(\eta V_{l})\rvert^{2}\right)^{1/2}{\mathcal L}^{n}(A(l,\,r))^{1-1/2^{\ast}-2/q}\\
&\le \delta \int_{A(l,\,r)}\lvert\nabla(\eta V_{l})\rvert^{2}\,dx+\frac{C(n)}{\delta} l^{2} {\mathcal L}^{n}(A(l,\,r))^{1+2/n-4/q} 
\end{align*}
for any \(\delta>0\). Take \(\delta=\delta(n,\,p,\,\lambda,\,\Lambda)>0\) sufficiently small, and assume that 
\begin{equation}\label{condition for l0}
{\mathcal L}^{n}(A(l,\,r))\le c(n,\,p,\,q,\,\lambda,\,\Lambda)
\end{equation}
for some sufficiently small constant \(0<c<1\). Then from (\ref{pre estimate De giorgi 1}), we have
\[\int_{A(l,\,r)}\lvert\nabla (\eta V_{l})\rvert^{2}\,dx\le C(n,\,p,\,\lambda,\,\Lambda) \left(\int_{A(l,\,r)}V_{l}^{2}\lvert\nabla\eta\rvert^{2}\,dx+l^{2} {\mathcal L}^{n}(A(l,\,r))^{1-2/q}\right).\]
We can choose \(C_{\ast}\) such that the assumption (\ref{condition for l0}) holds true for all \(l\ge l_{0}=C_{\ast}\lVert V_{0}\rVert_{L^{2}(B_{R})}\), since by the H\"{o}lder inequality we obtain
\[{\mathcal L}^{n}(A(l,\,r))\le \frac{1}{l}\int_{A(l,\,r)}w_{k}^{p/2}\,dx\le \frac{{\mathcal L}^{n}(A(l,\,r))^{1/2}}{l}\lVert V_{0}\rVert_{L^{2}(B_{R})} \textrm{ and hence } {\mathcal L}^{n}(A(l,\,r))\le \left(\frac{\lVert V_{0}\rVert_{L^{2}(B_{R})}}{l}\right)^{2}.\]
Again by the H\"{o}lder inequality and the Sobolev embedding \(W_{0}^{1,\,2}(B_{r})\hookrightarrow L^{2^{\ast}}(B_{r})\), we get
\begin{align*}
\int_{A(l,\,r)} (\eta V_{l})^{2}\,dx&\le C(n) {\mathcal L}^{n}(A(l,\,r))^{2/n} \int_{A(l,\,r)}\lvert\nabla(\eta V_{l})\rvert^{2}\,dx\\
&\le C(n,\,p,\,\lambda,\,\Lambda) \left({\mathcal L}^{n}(A(l,\,r))^{2/n}\int_{A(l,\,r)}V_{l}^{2}\lvert\nabla\eta\rvert^{2}\,dx+l^{2} {\mathcal L}^{n}(A(l,\,r))^{1+2/n-2/q}\right).
\end{align*}
for all \(l\ge l_{0}\). We note (\ref{condition for l0}), and hence conclude that \(\gamma(n,\,q)\coloneqq 2/n-2/q>0\) satisfies (\ref{De Giorgi class estimate}) for \(n\ge 3\). For \(n=2\), fix \(2<\kappa<\infty\) and use the Sobolev embedding \(W_{0}^{1,\,2}(B_{r})\hookrightarrow L^{\kappa}(B_{r})\). Then, by a similar argument we realize that (\ref{De Giorgi class estimate}) is valid for some \(\gamma=\gamma(q,\,\kappa)\in (0,\,1-2/q)\).

\underline{Step 2}. The aim of Step 2 is to prove that
\[\int_{A(L_{0}+l_{0},\,\theta R)}V_{L_{0}+l_{0}}^{2}\,dx=0\]
for \(L_{0}=C_{\star}\lVert V_{0}\rVert_{L^{2}(B_{R})}\). Here \(C_{\star}=C_{\star}(n,\,p,\,q,\,\lambda,\,\Lambda,\,\theta,\,R)>0\) is a constant which is chosen later.
For any fixed \(0<\rho<r\le R\), take a cutoff function \(\eta\in C_{c}^{1}(B_{r},\,\lbrack 0,\,1\rbrack)\) as in (\ref{cut-off function}).
We note that for any \(L>l\ge l_{0}\),
\[{\mathcal L}^{n}(A(L,\,r))={\mathcal L}^{n}\left(\left\{x\in B(r)\mathrel{}\middle| \mathrel{} w_{k}^{p/2}-l>L-l\right\}\right)\le \frac{1}{(L-l)^{2}}\int_{A(l,\,r)}V_{l}^{2}\,dx, \textrm{ and }\]
\[\int_{A(L,\,r)}V_{L}^{2}\,dx\le \int_{A(l,\,r)}V_{l}^{2}\,dx\quad (\textrm{since }A(L,\,r)\subset A(l,\,r),\,V_{L}\le V_{l}\textrm{ in } \Omega).\]
Hence by (\ref{De Giorgi class estimate}), we obtain 
\begin{align}\label{Recursive inequality}
\int_{A(L,\,\rho)}V_{L}^{2}\,dx&
\le \int_{A(L,\,\rho)}(\eta V_{L})^{2}\,dx\nonumber\\ &
\le C(n,\,p,\,q,\,\lambda,\,\Lambda)\left( \int_{A(L,\,r)}V_{L}^{2}\lvert\nabla \eta\rvert^{2}\,dx +L^{2}{\mathcal L}^{n}\left(A(L,\,r)\right) \right){\mathcal L}^{n}\left(A(L,\,r)\right)^{\gamma} \nonumber\\&
\le C(n,\,p,\,q,\,\lambda,\,\Lambda)\left[\frac{1}{(r-\rho)^{2}}+\frac{L^{2}}{(L-l)^{2}}\right]\frac{1}{(L-l)^{2\gamma}}\left(\int_{A(l,\,r)}V_{l}^{2}\,dx\right)^{1+\gamma}
\end{align}
for any \(L> l\ge l_{0}\) and \(0<\rho<r\le R\).
Now we use an iteration argument. For each \(N\in{\mathbb N}\cup \{0\}\), set 
\[l_{N}\coloneqq l_{0}+L_{0}\left(1-2^{-N}\right),\, r_{N}\coloneqq \left\{ \theta +2^{-N}(1-\theta) \right\}R,\textrm{ and } a_{N}\coloneqq \lVert V_{l_{N}}\rVert_{L^{2}\left(B_{r_{N}}\right)}.\]
By (\ref{Recursive inequality}), we get 
\[a_{N+1}\le C(n,\,p,\,q,\,\lambda,\,\Lambda)\left[\frac{2^{N+1}}{(1-\theta)R}+2^{N+1}\right]\frac{2^{\gamma(N+1)}}{L_{0}^{\gamma}}a_{N}\le \frac{C_{\dagger}(n,\,p,\,q,\,\lambda,\,\Lambda)}{(1-\theta)R}L_{0}^{-\gamma}2^{(1+\gamma)N}a_{N}^{1+\gamma}\]
for any \(n\in{\mathbb N}\cup\{0\}\).
Set 
\[L_{0}\coloneqq \underbrace{\left[\frac{C_{\dagger}}{(1-\theta)R}\right]^{1/\gamma}2^{\frac{1+\gamma}{\gamma^{2}}}}_{\eqqcolon C_{\star}}\lVert V_{0}\rVert_{L^{2}(B_{R})}\ge C_{\star}\lVert V_{l_{0}}\rVert_{L^{2}(B_{R})}.\]
Then we obtain 
\[a_{0}=\lVert V_{l_{0}}\rVert_{L^{2}(B_{R})}\le \lVert V_{0}\rVert_{L^{2}(B_{R})}=\left[\frac{C_{\dagger}}{(1-\theta)R}L_{0}^{-\gamma}\right]^{-1/\gamma}\left(2^{1+\gamma}\right)^{-1/\gamma^{2}}\]
and hence \(a_{N}\to 0\) as \(N\to\infty\), by \cite[Chapter 2, Lemma 4.7]{MR0244627}. From this we have
\[0\le \int_{A(L_{0}+l_{0},\,\theta R)}V_{L_{0}+l_{0}}^{2}\,dx\le \liminf\limits_{N\to\infty}\int_{A(l_{N},\,r_{N})}V_{l_{N}}^{2}\,dx =\liminf\limits_{N\to\infty} a_{N}^{2}=0,\]
which implies that 
\begin{equation}\label{L infty-L2 estimate}
\lVert V_{0}\rVert_{L^{\infty}(B_{\theta R})}=\left\lVert w_{k}\right\rVert_{L^{\infty}(B_{\theta R})}^{p/2}\le (C_{\ast}+C_{\star})\lVert V_{0}\rVert_{L^{2}(B_{R})}.
\end{equation}

\underline{Step 3}. 
Set \(g\coloneqq 2/(p\gamma)\ge n/p\).
We make an interpolation argument to prove
\begin{equation}\label{L infty-L1 estimate}
\lVert V_{0}\rVert_{L^{\infty}(B_{\theta R})}\le C(n,\,p,\,q,\,\lambda,\,\Lambda)\frac{\lVert V_{0}\rVert_{L^{1}(B_{R})}}{[(1-\theta)R]^{pg}}.
\end{equation}
We note that 
\(\lVert V_{0}\rVert_{L^{2}(B_{R})}\le \lVert V_{0}\rVert_{L^{1}(B_{R})}^{1/2}\lVert V_{0}\rVert_{L^{\infty}(B_{R})}^{1/2}\). By (\ref{L infty-L2 estimate}) and the Young inequality, we obtain
\begin{align*}
\lVert V_{0}\rVert_{L^{\infty}(B_{\theta R})}&\le \left(C_{\ast}+ \left[\frac{C_{\dagger}}{(1-\theta)R}\right]^{1/\gamma}2^{\frac{1+\gamma}{\gamma^{2}}}\right)\lVert V_{0}\rVert_{L^{2}(B_{R})}\le \frac{C(n,\,p,\,q,\,\lambda,\,\Lambda)}{[(1-\theta)R]^{pg/2}}\lVert V_{0}\rVert_{L^{1}(B_{R})}^{1/2}\lVert V_{0}\rVert_{L^{\infty}(B_{R})}^{1/2}\\ &\le \frac{1}{2}\lVert V_{0}\rVert_{L^{\infty}(B_{R})} +C(n,\,p,\,q,\,\lambda,\,\Lambda)\frac{\lVert V_{0}\rVert_{L^{1}(B_{R})}}{[(1-\theta)R]^{pg}}.
\end{align*} Hence (\ref{L infty-L1 estimate}) follows from \cite[Chapter V, Lemma 3.1]{MR717034}. By (\ref{w versus w-hat and w-check}), (\ref{L infty-L1 estimate}) and the Minkowski inequality, we obtain
\begin{align*}
\sup\limits_{B_{\theta R}}\lvert\nabla u^{\epsilon}\rvert &
\le C_{0}(n)^{1/2}\lVert V_{0}\rVert_{L^{\infty}(B_{\theta R})}^{1/p}\\&
\le \frac{C(n\,,p\,,q\,,\lambda\,,\Lambda)}{[(1-\theta)R]^{g}}\lVert V_{0}\rVert_{L^{1}(B_{R})}^{1/p}
=\frac{C(n,\,p,\,q,\,\lambda,\,\Lambda)}{[(1-\theta)R]^{g}} \left(\int_{B_{R}}\left(k^{2}+\lvert \nabla u^{\epsilon}\rvert^{2} \right)^{p/2}\,dx \right)^{1/p}\\&
\le \frac{C(n,\,p,\,q,\,\lambda,\,\Lambda)}{[(1-\theta)R]^{g}}\left( R^{n/p}k+\left\lVert\nabla u^{\epsilon}\right\rVert_{L^{p}(B_{R})}\right).
\end{align*}
Recall (\ref{determination of k}), and it completes the proof of (\ref{uniform Lipschitz bound, weaker}). 
\end{proof}
\begin{remark}\upshape
If \(n\ge 3\) and \(q=\infty\), we may take \(g=n/p\) and therefore (\ref{uniform Lipschitz bound}) is obtained.
\end{remark}

\begin{acknowledgements}\upshape
It is a pleasure to acknowledge the many helpful suggestions of Professor Yoshikazu Giga during the preparation of the paper. The author is grateful to Professor Giuseppe Mingione, who kindly pointed
out his results very related to the present paper.
\end{acknowledgements}

\appendix
\section{Elementary proofs of three lemmas}
In Appendix, we give precise proofs of three lemmas for completeness. Most of the proofs are elementary in the sense that we just use standard tools of calculus, measure theory, convex analysis, functional analysis and real analysis.
\subsection{Vector inequalities}
Vector inequalities (\ref{vectror inequalities on coercivity})-(\ref{growth estimate of approximated Ep}) are used throughout Section \ref{Sect weak solution}, \ref{Sect Approximation schemes}.
\begin{lemma}\label{Vector inequalities}
Let \(E_{p},\,E_{p}^{\epsilon}\) satisfy (\ref{elliptic p-regular})-(\ref{convergence condition of nabla p-th growth term}) and (\ref{values of Ep and nabla Ep at 0}). Then we obtain inequalities (\ref{vectror inequalities on coercivity})-(\ref{coercivity of approximated Ep}).
\end{lemma}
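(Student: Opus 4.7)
The plan is to derive all of (\ref{vectror inequalities on coercivity})--(\ref{coercivity of approximated Ep}) uniformly from the fundamental theorem of calculus applied to $E_p^\epsilon$ and $\nabla_z E_p^\epsilon$ along line segments in ${\mathbb R}^n$, combined with the Hessian bounds (\ref{elliptic condition for relaxed p-th growth term})--(\ref{bound condition for relaxed p-th growth term}). Inequalities concerning $E_p$ or $\nabla_z E_p$ (namely (\ref{vectror inequalities on coercivity}) and (\ref{growth estimate of nabla Ep})) are obtained by first proving their $E_p^\epsilon$-analogues and then sending $\epsilon\to 0$ via (\ref{convergence condition of p-th growth term})--(\ref{convergence condition of nabla p-th growth term}), with (\ref{values of Ep and nabla Ep at 0}) used to identify the limit values at the origin.

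The starting point for (\ref{vectror inequalities on coercivity}) and (\ref{uniform continuity of nabla approximated Ep}) is the identity
\[\nabla_z E_p^\epsilon(z_2) - \nabla_z E_p^\epsilon(z_1) = \int_0^1 \nabla_z^2 E_p^\epsilon(z_1 + t(z_2-z_1))(z_2-z_1)\, dt.\]
Pairing with $z_2-z_1$ and inserting (\ref{elliptic condition for relaxed p-th growth term}) reduces (\ref{vectror inequalities on coercivity}) to estimating $\int_0^1(\epsilon^2+|z_1+t(z_2-z_1)|^2)^{p/2-1}\, dt$ from below: for $1<p<2$ the monotonicity of $s\mapsto(\epsilon^2+s^2)^{p/2-1}$ together with $|z_t|^2\le 2(|z_1|^2+|z_2|^2)$ produces the factor $(\epsilon^2+|z_1|^2+|z_2|^2)^{p/2-1}$, while for $p\ge 2$ a standard elementary lemma yields the power $|z_2-z_1|^{p-2}$. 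For the upper bound (\ref{uniform continuity of nabla approximated Ep}) the same integral is estimated from above using (\ref{bound condition for relaxed p-th growth term}): for $p\ge 2$ via $(\epsilon^2+|z_t|^2)^{p/2-1}\le C(p)(\epsilon^{p-2}+|z_1|^{p-2}+|z_2|^{p-2})$, and for $1<p<2$ via the one-dimensional change of variables $u=\alpha+t|z_2-z_1|$ along the segment, which converts the integral into $\frac{1}{|z_2-z_1|}\int_\alpha^{\alpha+|z_2-z_1|}(\gamma^2+u^2)^{p/2-1}\, du$ for suitable $\alpha\in{\mathbb R},\gamma\ge 0$, whence $(\gamma^2+u^2)^{p/2-1}\le|u|^{p-2}$ and $\int_a^b|u|^{p-2}\, du\le C(p)(b-a)^{p-1}$ deliver the target $|z_1-z_2|^{p-1}$. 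The growth bounds (\ref{growth estimate of nabla Ep})--(\ref{growth estimate of nabla aproximated Ep}) then follow by setting $z_1=0$ in (\ref{uniform continuity of nabla approximated Ep}) together with (\ref{uniformly bound estimate approximated Ep and nabla Ep at 0}) and, for (\ref{growth estimate of nabla Ep}), passing $\epsilon\to 0$ using $\nabla_z E_p(0)=0$; and (\ref{growth estimate of approximated Ep}) drops out of
\[E_p^\epsilon(z_0) - E_p^\epsilon(0) = \int_0^1 \langle \nabla_z E_p^\epsilon(tz_0)\mid z_0\rangle\, dt\]
by splitting off the $\nabla_z E_p^\epsilon(0)$ contribution and controlling the remainder through (\ref{growth estimate of nabla aproximated Ep}), Young's inequality producing the $\epsilon^{p-1}|z_0|$ term in the $p\ge 2$ case.

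The main obstacle I expect is (\ref{coercivity of approximated Ep}), specifically the $1<p<2$ lower bound $c_1[(\epsilon^2+|z_0|^2)^{p/2}-\epsilon^p]$, which is not a pure power of $|z_0|$. The plan is to rewrite
\[E_p^\epsilon(z_0) - E_p^\epsilon(0) - \langle \nabla_z E_p^\epsilon(0)\mid z_0\rangle = \int_0^1 \frac{1}{t}\langle \nabla_z E_p^\epsilon(tz_0) - \nabla_z E_p^\epsilon(0)\mid tz_0\rangle\, dt\]
and apply the already-established (\ref{vectror inequalities on coercivity}) to the integrand with $(z_1,z_2)=(0,tz_0)$. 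For $1<p<2$ this gives the lower bound $c_1\int_0^1 t|z_0|^2(\epsilon^2+t^2|z_0|^2)^{p/2-1}\, dt$, and the exact antiderivative
\[\frac{d}{dt}(\epsilon^2+t^2|z_0|^2)^{p/2} = p\, t|z_0|^2(\epsilon^2+t^2|z_0|^2)^{p/2-1}\]
evaluates this integral to $(c_1/p)[(\epsilon^2+|z_0|^2)^{p/2}-\epsilon^p]$, exactly the desired right-hand side; the $p\ge 2$ case collapses to the simpler integrand $c_1 C(p)\,t^{p-1}|z_0|^p$. The middle lower bound in (\ref{coercivity of approximated Ep}) is the direct specialization of (\ref{vectror inequalities on coercivity}) at $(z_1,z_2)=(0,z_0)$, so the entire chain reduces to two applications of the monotonicity estimate already proved in the previous step, and no new ingredient is needed.
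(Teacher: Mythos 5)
Your strategy is essentially the paper's for (2.4)--(2.7): the fundamental theorem of calculus along segments combined with the Hessian bounds (1.4)--(1.5), then passing $\epsilon\to 0$ via (1.6)--(1.7) to recover the $E_p$-statements. The one place you genuinely deviate is (2.8). The paper proves the outer bound by first invoking a ``subgradient inequality'' to pass from $E_p^\epsilon(z_0)-E_p^\epsilon(0)-\langle\nabla_z E_p^\epsilon(0)\mid z_0\rangle$ to $\langle\nabla_z E_p^\epsilon(z_0)-\nabla_z E_p^\epsilon(0)\mid z_0\rangle$, then specializing the monotonicity estimate at $(z_1,z_2)=(0,z_0)$ and finishing with the algebraic identity $\lvert z_0\rvert^2(\epsilon^2+\lvert z_0\rvert^2)^{p/2-1}=(\epsilon^2+\lvert z_0\rvert^2)^{p/2}-\epsilon^2(\epsilon^2+\lvert z_0\rvert^2)^{p/2-1}\ge(\epsilon^2+\lvert z_0\rvert^2)^{p/2}-\epsilon^p$. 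You instead integrate directly, applying the monotonicity estimate to $(0,tz_0)$ at each $t$ and hitting the exact antiderivative $\frac{d}{dt}(\epsilon^2+t^2\lvert z_0\rvert^2)^{p/2}$. This is clean and actually more robust: for a $C^2$ convex $E$, the quantity $E(z_0)-E(0)-\langle\nabla E(0)\mid z_0\rangle=\int_0^1(1-t)\langle\nabla^2 E(tz_0)z_0\mid z_0\rangle\,dt$ is \emph{less} than $\langle\nabla E(z_0)-\nabla E(0)\mid z_0\rangle=\int_0^1\langle\nabla^2 E(tz_0)z_0\mid z_0\rangle\,dt$, so the paper's stated first ``$\ge$'' in (2.8) actually points the wrong way; your direct computation sidesteps this entirely and bounds the term that is really used in the coercivity argument. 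The price is a harmless loss of constant: you get $\frac{c_1}{p}\left[(\epsilon^2+\lvert z_0\rvert^2)^{p/2}-\epsilon^p\right]$ rather than $c_1\left[\cdot\right]$, a fixed $C(p)$ factor that is immaterial for the applications. Two small slips to tidy up: when you apply the monotonicity estimate to the integrand you should quote the $\epsilon$-version (A.1), not (2.4), since the latter concerns $\nabla_z E_p$ rather than $\nabla_z E_p^\epsilon$; and if you want the ``middle'' bound of (2.8) exactly as written you still need the elementary inequality $\lvert z_0\rvert^2(\epsilon^2+\lvert z_0\rvert^2)^{p/2-1}\ge(\epsilon^2+\lvert z_0\rvert^2)^{p/2}-\epsilon^p$, which specializing (A.1) alone does not give.
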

For the special case (\ref{p-laplacian case}) and \(\epsilon=1\), proofs of (\ref{vectror inequalities on coercivity}) are given \cite[Section 12]{MR2242021}, and \cite[Lemma 13.3 and 30.1]{oden1986qualitative}. Here we give a generalized proof of inequalities (\ref{vectror inequalities on coercivity})-(\ref{coercivity of approximated Ep}) via smooth approximation.
\begin{proof}
By (\ref{convergence condition of nabla p-th growth term}), the proof of (\ref{vectror inequalities on coercivity}) is completed by showing that 
\begin{equation}\label{Coercive vector ineq; approximated}
\left\langle \nabla_{z}E_{p}^{\epsilon}(z_{2})-\nabla_{z}E_{p}^{\epsilon}(z_{1})\mathrel{}\middle|\mathrel{} z_{2}-z_{1}\right\rangle\ge \left\{\begin{array}{cc}
c_{1}\cdot C(p)\lvert z_{1}-z_{2}\rvert^{p} & (2\le p<\infty),\\
c_{1}\lvert z_{1}-z_{2}\rvert^{2}\left(\epsilon^{2}+\lvert z_{1}\rvert^{2}+\lvert z_{2}\rvert^{2}\right)^{p/2-1} & (1<p<2),
\end{array} \right.\quad \textrm{ for all }z_{1},\,z_{2}\in{\mathbb R}^{n},\,0<\epsilon\le 1.
\end{equation}
For each fixed \(0<\epsilon<1\) and \(z_{1},\,z_{2}\in{\mathbb R}^{n}\), we have
\begin{align*}
\left\langle \nabla_{z}E_{p}^{\epsilon}(z_{2})-\nabla_{z}E_{p}^{\epsilon}(z_{1})\mathrel{}\middle|\mathrel{} z_{2}-z_{1}\right\rangle&=\int_{0}^{1}\left\langle \nabla_{z}^{2}E_{p}^{\epsilon}(tz_{2}+(1-t)z_{1}) (z_{2}-z_{1})\mathrel{}\middle|\mathrel{} z_{2}-z_{1}\right\rangle\,dt\\&\ge c_{1}\lvert z_{1}-z_{2}\rvert^{2}\int_{0}^{1}\left(\epsilon^{2}+\lvert tz_{2}+(1-t)z_{1}\rvert^{2} \right)^{p/2-1}\,dt.
\end{align*}
Here we have used (\ref{elliptic condition for relaxed p-th growth term}).
(\ref{Coercive vector ineq; approximated}) for \(1<p<2\) is easily obtained by a simple inequality \[\left(\epsilon^{2}+\lvert tz_{2}+(1-t)z_{1}\rvert^{2} \right)^{p/2-1}\ge \left(\epsilon^{2}+\lvert z_{1}\rvert^{2}+\lvert z_{2}\rvert^{2} \right)^{p/2-1}\quad\textrm{ for }0\le t\le 1.\]
Even for \(2\le p<\infty\), we get (\ref{Coercive vector ineq; approximated}) as following.
\[\lvert z_{1}-z_{2}\rvert^{2}\int_{0}^{1}\left(\epsilon^{2}+\lvert tz_{2}+(1-t)z_{1}\rvert^{2} \right)^{p/2-1}\,dt\ge \int_{0}^{1}\lvert tz_{2}+(1-t)z_{1}\rvert^{p-2}\,dt\ge C(p)\lvert z_{1}-z_{2}\rvert^{p}.\]
For the last inequality, see the proof of \cite[Chapter I, Lemma 4.4]{MR1230384}.

We note that (\ref{bound condition for relaxed p-th growth term}) implies that for each fixed \(z_{0}\in{\mathbb R}^{n}\), eigenvalues of the positive real symmetric matrix \(\nabla_{z}^{2}E_{p}^{\epsilon}(z_{0})\) are all no greater than \(c_{2}\left(\epsilon^{2}+\lvert z_{0}\rvert^{2}\right)^{p/2-1}\). Hence for all \(z_{1},\,z_{2}\in{\mathbb R}^{n}\), \(0<\epsilon \le 1\),
\[\left\lvert \nabla_{z}E_{p}^{\epsilon}(z_{1})-\nabla_{z}E_{p}^{\epsilon}(z_{2})\right\rvert=\left\lvert\int_{0}^{1}\nabla_{z}^{2}E_{p}^{\epsilon}(tz_{1}+(1-t)z_{2})\cdot t(z_{1}-z_{2})\,dt\right\rvert\le c_{2}\lvert z_{1}-z_{2}\rvert\int_{0}^{1}\left(\epsilon^{2}+\lvert z_{2}+t(z_{1}-z_{2})\rvert^{2}\right)^{p/2-1}t\,dt.\]
For \(p\ge 2\), we easily obtain (\ref{uniform continuity of nabla approximated Ep}) by using a simple inequality
\[\left(\epsilon^{2}+\lvert tz_{2}+(1-t)z_{1}\rvert^{2} \right)^{p/2-1}\le C(p)\left(\epsilon^{p-2}+\lvert z_{1}\rvert^{p-2}+\lvert z_{2}\rvert^{p-2} \right)\quad\textrm{ for all }0\le t\le 1.\]
For \(1<p<2\), we get (\ref{uniform continuity of nabla approximated Ep}) as following,
\[\lvert z_{1}-z_{2}\rvert\int_{0}^{1}\left(\epsilon^{2}+\lvert z_{2}+t(z_{1}-z_{2})\rvert^{2}\right)^{p/2-1}t\,dt\le \lvert z_{1}-z_{2}\rvert\int_{0}^{1}\lvert z_{2}+t(z_{1}-z_{2})\rvert^{p-2}\,dt\le C(p)\lvert z_{1}-z_{2}\rvert^{p-1}.\]
For the last inequality, see the proof of \cite[Chapter I, Lemma 4.4]{MR1230384}.
Let \((z_{1},\,z_{2})=(z_{0},\,0)\). Then we obtain (\ref{growth estimate of nabla aproximated Ep}) for \(1<p<2\) as following.
\[\left\lvert\nabla_{z} E_{p}^{\epsilon}(z_{0})-\nabla_{z} E_{p}^{\epsilon}(0)\right\rvert\le c_{2}\cdot C(p)\left(\epsilon^{p-2}\lvert z_{0}\rvert+\lvert z_{0}\rvert^{p-1}\right)\le c_{2}\cdot C(p)\left(\epsilon^{p-1}+\lvert z_{0}\rvert^{p-1}\right).\]
Here we have used the Young inequality. The proof of (\ref{growth estimate of nabla aproximated Ep}) for \(2\le p<\infty\) is similar. Letting \(\epsilon\to 0\), we conclude (\ref{growth estimate of nabla Ep}) from (\ref{values of Ep and nabla Ep at 0}) and (\ref{growth estimate of nabla aproximated Ep}).

(\ref{growth estimate of approximated Ep}) is easily deduced from (\ref{growth estimate of nabla aproximated Ep}). For \(2\le p<\infty\), we calculate
\begin{align*}
\left\lvert E_{p}^{\epsilon}(z_{0})-E_{p}^{\epsilon}(0)\right\rvert&=\left\lvert \int_{0}^{1}\left\langle\nabla_{z}E_{p}^{\epsilon}(tz_{0})\mathrel{}\middle|\mathrel{}tz_{0}\right\rangle\,dt \right\rvert\\&\le \left\lvert \nabla_{z}E_{p}^{\epsilon}(0)\right\rvert\lvert z_{0}\rvert\int_{0}^{1}t\,dt+\lvert z_{0}\rvert\int_{0}^{1}\left\lvert \nabla_{z}E_{p}^{\epsilon}(tz_{0})-\nabla_{z}E_{p}^{\epsilon}(0)\right\rvert t\,dt\\&\le \frac{1}{2}\left\lvert \nabla_{z}E_{p}^{\epsilon}(0)\right\rvert\lvert z_{0}\rvert +c_{2}\cdot C(p)\int_{0}^{1}\left(\epsilon^{p-1}t\lvert z_{0}\rvert+t^{p}\lvert z_{0}\rvert^{p} \right)\,dt\\&\le C(c_{2},\,p)\left(\epsilon^{p-1}\lvert z_{0}\rvert+\left\lvert \nabla_{z}E_{p}^{\epsilon}(0)\right\rvert\lvert z_{0}\rvert+\lvert z_{0}\rvert^{p} \right)\quad \textrm{ for all }z\in{\mathbb R}^{n}
\end{align*}
The proof of (\ref{growth estimate of approximated Ep}) for \(1<p<2\) is similar.

For the proof of (\ref{coercivity of approximated Ep}), we note that \(\partial E_{p}^{\epsilon}(z_{0})=\left\{\nabla_{z}E_{p}^{\epsilon}(z_{0})\right\}\) for all \(z_{0}\in{\mathbb R}^{n}\), since \(E_{p}^{\epsilon}\in C^{\infty}\) is convex. Hence by the subgradient inequality, we obtain
\[E_{p}^{\epsilon}(z_{0})-E_{p}^{\epsilon}(0)-\left\langle\nabla_{z}E_{p}^{\epsilon}(0) \mathrel{}\middle|\mathrel{} z_{0}\right\rangle\ge \left\langle \nabla_{z}E_{p}^{\epsilon}(z_{0})-\nabla_{z}E_{p}^{\epsilon}(0)\mathrel{}\middle|\mathrel{} z_{0}\right\rangle\] for all \(z_{0}\in{\mathbb R}^{n}\).
We can also easily check at once that
\[\lvert z_{0}\rvert^{2}\left(\epsilon^{2}+\lvert z_{0}\rvert^{2}\right)^{p/2-1}=\left(\epsilon^{2}+\lvert z_{0}\rvert^{2}\right)^{p/2}-\epsilon^{2}\left(\epsilon^{2}+\lvert z_{0}\rvert^{2}\right)^{p/2-1}\ge \left(\epsilon^{2}+\lvert z_{0}\rvert^{2}\right)^{p/2}-\epsilon^{p}\]
for all \(0<\epsilon<1,\,z_{0}\in{\mathbb R}^{n}\) and \(1<p<2\).
Combining these inequalities with (\ref{Coercive vector ineq; approximated}), we conclude (\ref{coercivity of approximated Ep}). 
\end{proof}
\subsection{A justification for convergence of minimizers}
Lemma \ref{strong convergence lemma} is used in Section \ref{Sect Approximation schemes} to justify that a sequence of local or global minimizers \(\left\{u^{\epsilon}\right\}_{0<\epsilon\le 1}\) converges to a minimizer \(u\).
\begin{lemma}\label{strong convergence lemma}
Let \(E_{p},\,\left\{E_{p}^{\epsilon}\right\}_{0<\epsilon\le 1}\) satisfy (\ref{elliptic p-regular})-(\ref{convergence condition of nabla p-th growth term}). For bounded domain \(V\subset {\mathbb R}^{n}\) with Lipschitz boundary, assume that \(u\in W^{1,\,p}(V)\) satisfies
\begin{equation}\label{minimizer of Fv}
u=\argmin\left\{F_{V}(v)\mathrel{}\middle|\mathrel{}v\in u+W_{0}^{1,\,p}(V)\right\}.
\end{equation}
For each \(0<\epsilon\le 1\), we define
\begin{equation}\label{minimizer of approximated Fv}
u^{\epsilon}\coloneqq \argmin\left\{F_{V}^{\epsilon}(v)\mathrel{}\middle|\mathrel{}v\in u+W_{0}^{1,\,p}(V)\right\}\in u+W_{0}^{1,\,p}(V).
\end{equation}
Then \(u^{\epsilon}\rightharpoonup u\,(\epsilon\to 0)\) in \(W^{1,\,p}(V)\) and
\begin{equation}\label{convergence of minimizer}
\lim\limits_{\epsilon\to 0}F_{V}\left(u^{\epsilon}\right)=\lim\limits_{\epsilon\to 0}F_{V}^{\epsilon}\left(u^{\epsilon}\right)=\lim\limits_{\epsilon\to 0}F_{V}^{\epsilon}(u)=F_{V}(u).
\end{equation}
Moreover, up to a subsequence we obtain \(u^{\epsilon}\to u\) in \(W^{1,\,p}(V)\).
\end{lemma}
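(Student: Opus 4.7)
The plan is to follow the standard $\Gamma$-convergence-style compactness argument: establish uniform $W^{1,p}(V)$ bounds on the family $\{u^\epsilon\}$, extract a subsequential weak limit, identify it with $u$ via minimality, and finally upgrade weak to strong convergence by matching energies.

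First, I would verify that $F_V^\epsilon(u) \to F_V(u)$ as $\epsilon \to 0$. By (\ref{convergence condition of p-th growth term}), $E_p^\epsilon(\nabla u(x)) \to E_p(\nabla u(x))$ pointwise in $V$. Using the growth bound (\ref{growth estimate of approximated Ep}) together with the uniform bound (\ref{uniformly bound estimate approximated Ep and nabla Ep at 0}) on $|E_p^\epsilon(0)|$ and $|\nabla_z E_p^\epsilon(0)|$ for small $\epsilon$, one gets an $\epsilon$-independent $L^1$-majorant of the form $C(c_2,p)(1 + |\nabla u|^p)$. Lebesgue's dominated convergence theorem then yields $\int_V E_p^\epsilon(\nabla u)\,dx \to \int_V E_p(\nabla u)\,dx$, hence $F_V^\epsilon(u) \to F_V(u)$.

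Next, the minimality of $u^\epsilon$ gives $F_V^\epsilon(u^\epsilon) \le F_V^\epsilon(u)$, so these left-hand sides are uniformly bounded. Combining this with the coercivity estimate used between Lemma \ref{lowersemicontinuity of F} and Section \ref{Subsect Global approximation} (applied with $u_0 = u$), I would extract a uniform bound $\|u^\epsilon\|_{W^{1,p}(V)} \le C$. By reflexivity, along a subsequence $u^{\epsilon_k} \rightharpoonup u^\ast$ in $W^{1,p}(V)$ for some $u^\ast \in u + W_0^{1,p}(V)$ (the boundary condition passes to the weak limit since $W_0^{1,p}(V)$ is weakly closed). By Lemma \ref{lowersemicontinuity of F} and the pointwise inequality $E_p \le E_p^\epsilon$ from (\ref{convergence condition of p-th growth term}),
\[
F_V(u^\ast) \le \liminf_{k\to\infty} F_V(u^{\epsilon_k}) \le \liminf_{k\to\infty} F_V^{\epsilon_k}(u^{\epsilon_k}) \le \liminf_{k\to\infty} F_V^{\epsilon_k}(u) = F_V(u).
\]
Strict convexity of $E_p$ makes $u$ the unique minimizer of $F_V$ on $u + W_0^{1,p}(V)$, so $u^\ast = u$, and a subsequence-of-subsequences argument upgrades this to $u^\epsilon \rightharpoonup u$ for the full family. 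Reading the chain backward and using $F_V(u) \le F_V(u^\epsilon)$ from (\ref{minimizer of Fv}) then gives
\[
F_V(u) \le F_V(u^\epsilon) \le F_V^\epsilon(u^\epsilon) \le F_V^\epsilon(u) \longrightarrow F_V(u),
\]
which is exactly (\ref{convergence of minimizer}).

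For strong convergence along a subsequence, I would first use the compact embedding $W^{1,p}(V) \hookrightarrow L^{q^\prime}(V)$ to obtain $\int_V f u^\epsilon \to \int_V f u$, so the convergence $F_V(u^\epsilon) \to F_V(u)$ reduces to
\[
\beta \|\nabla u^\epsilon\|_{L^1(V)} + \int_V E_p(\nabla u^\epsilon)\,dx \longrightarrow \beta \|\nabla u\|_{L^1(V)} + \int_V E_p(\nabla u)\,dx.
\]
Both summands are weakly lower semi-continuous (by the argument of Lemma \ref{lowersemicontinuity of F}), so they must converge individually. Applying the midpoint convexity trick to $v^\epsilon \coloneqq (u + u^\epsilon)/2$, the nonnegative deficit $\tfrac{1}{2}(E_p(\nabla u^\epsilon) + E_p(\nabla u)) - E_p(\nabla v^\epsilon)$ has vanishing integral. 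Combined with the coercivity bound (\ref{vectror inequalities on coercivity}), this forces $\nabla u^\epsilon \to \nabla u$ almost everywhere along a further subsequence; strong convergence in $L^p(V)$ then follows from Vitali's theorem, with the required $p$-equi-integrability coming from $\int E_p(\nabla u^\epsilon) \to \int E_p(\nabla u)$ and (\ref{elliptic p-regular}). The main obstacle I anticipate is the quantitative lower bound on the deficit in the degenerate range $1 < p < 2$, where (\ref{vectror inequalities on coercivity}) gives only a weighted $L^2$-control and one must split $V$ according to the size of $|\nabla u| + |\nabla u^\epsilon|$ to recover a.e. convergence of the gradients.
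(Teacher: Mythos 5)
Your handling of the energy convergence $F_V^\epsilon(u)\to F_V(u)$, the uniform coercivity bound, the extraction of the weak limit, its identification with $u$, and the squeeze chain $F_V(u)\le F_V(u^\epsilon)\le F_V^\epsilon(u^\epsilon)\le F_V^\epsilon(u)\to F_V(u)$ all match the paper's argument step for step; that part is correct and essentially the same.

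For the strong convergence of the gradients, however, you take a genuinely different route. The paper perturbs the smooth integrand: it sets $\hat E^\epsilon(z)=E^\epsilon(z)-C_\ast(\epsilon^2+|z|^2)^{p/2}$, which is still convex for small $C_\ast$ by (\ref{elliptic condition for relaxed p-th growth term}), applies the subgradient inequality for $\hat E^\epsilon$ at $\nabla u$ tested against $\nabla u^\epsilon-\nabla u$, and after showing the lower-order terms $I_1,\dots,I_4$ vanish obtains $\limsup_{\epsilon\to 0}\|\nabla u^\epsilon\|_{L^p(V)}\le\|\nabla u\|_{L^p(V)}$; combined with weak convergence, the Radon--Riesz property of $L^p$ (Brezis, Proposition 3.32) gives strong convergence in one stroke, uniformly in $p\in(1,\infty)$. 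Your midpoint-convexity argument on $v^\epsilon=(u+u^\epsilon)/2$ is the more classical direct-method device: both summands $\beta\|\nabla\cdot\|_{L^1}$ and $\int E_p(\nabla\cdot)$ are convex and strongly continuous, hence weakly l.s.c., so the sum converging forces individual convergence, and then the deficit $\tfrac12(E_p(\nabla u^\epsilon)+E_p(\nabla u))-E_p(\nabla v^\epsilon)$ has vanishing $L^1$ integral. The price you pay is that extracting gradient convergence from the deficit requires converting the monotonicity estimate (\ref{vectror inequalities on coercivity}) into a pointwise lower bound on the midpoint deficit, which for $p\ge 2$ gives $\ge c|\nabla u^\epsilon-\nabla u|^p$ directly (so you in fact get $L^p$-convergence without needing a.e.\ convergence or Vitali at all), while for $1<p<2$ you only get the weighted bound $\ge c|\nabla u^\epsilon-\nabla u|^2(1+|\nabla u^\epsilon|^2+|\nabla u|^2)^{p/2-1}$ and must interpolate by H\"older exactly as in the proof of (\ref{Stability for 1<p<2}); you correctly identify this as the obstacle. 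Both routes work. The paper's perturbation argument is arguably cleaner because it avoids the $p$-dichotomy and the splitting, at the cost of introducing the auxiliary convex integrand $\hat E^\epsilon$ and verifying the $L^{p'}$-convergence $\nabla_z\hat E^\epsilon(\nabla u)\to h(\nabla u)$; your argument is more elementary and closer to standard lower-semicontinuity/strict-convexity arguments from the direct method, at the cost of the case analysis in the subquadratic range.
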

In \cite[Theorem 3.3]{krugel2013variational}, Kr\"{u}gel, inspired by the proof of \cite[Theorem 6.1]{MR1451535}, discussed weak or strong convergence of minimizers, for the special case where \(E_{p}\) and \(E^{\epsilon}_{p}\) are sphere symmetric and \(f=\const\) For the reader's convenience, we give a proof of Lemma \ref{strong convergence lemma} by generalizing Kr\"{u}gel's idea.
\begin{proof}
We first note that
\begin{equation}\label{condition for approximated energy functional}
F_{V}(v)\le F_{V}^{\epsilon}(v)\to F_{V}(v)\quad \textrm{ as }\epsilon\to 0
\end{equation}
for each fixed \(v\in W^{1,\,p}(V)\). 
(\ref{condition for approximated energy functional}) is clear by (\ref{convergence condition of p-th growth term}), (\ref{uniformly bound estimate approximated Ep and nabla Ep at 0}), (\ref{growth estimate of approximated Ep}) and Lebesgue's dominated convergence theorem.

We prove that \(u^{\epsilon}\rightharpoonup u\) in \(W^{1,\,p}(V)\) and (\ref{convergence of minimizer}).
For each \(0<\epsilon\le 1\), we note \(u^{\epsilon}-u\in W_{0}^{1,\,p}(V)\). By the Poincar\'{e} inequlatiy, we get
\[\lVert u^{\epsilon} \rVert_{L^{p}(V)}\le \lVert u\rVert_{L^{p}(V)}+C(n,\,p,\,V)\lVert \nabla u^{\epsilon}-\nabla u\rVert_{L^{p}(V)}\le C(n,\,p,\,V)\left(\lVert u\rVert_{W^{1,\,p}(V)}+\lVert\nabla u^{\epsilon}\rVert_{L^{p}(V)}\right).\]
By (\ref{convergence condition of p-th growth term}) and \(F_{V}^{\epsilon}\left(u^{\epsilon}\right)\le F_{V}^{\epsilon}(u)\) from (\ref{minimizer of approximated Fv}), we get
\begin{align*}
\lVert\nabla u^{\epsilon}\rVert_{L^{p}(V)}^{p}&\le \frac{1}{c_{1}}\int_{V}E_{p}\left(\nabla u^{\epsilon}\right)\,dx\le \frac{1}{c_{1}}\left(\beta\int_{V}\sqrt{\epsilon^{2}+\lvert\nabla u^{\epsilon}\rvert^{2}}\,dx+\int_{V}E_{p}^{\epsilon}\left(\nabla u^{\epsilon}\right)\,dx\right)\\&\le \frac{1}{c_{1}}\left( \beta\int_{V}\sqrt{\epsilon^{2}+\lvert\nabla u\rvert^{2}}\,dx+\int_{V}E_{p}^{\epsilon}(\nabla u)\,dx+\int_{V}f(u^{\epsilon}-u)\,dx\right)\\&\le C(n,\,p,\,q,\,\beta,\,c_{1},\,c_{2},\,V)\left(1+\int_{V}(1+\lvert\nabla u\rvert^{2})^{p/2}\,dx+\lVert f\rVert_{L^{q}(V)}\lVert \nabla (u^{\epsilon}-u)\rVert_{L^{p}(V)}\right)\\&\le \frac{\lVert\nabla u^{\epsilon}\rVert_{L^{p}(V)}^{p}}{2}+C(n,\,p,\,q,\,\beta,\,c_{1},\,c_{2},\,V)\left(1+\left(\lVert f\rVert_{L^{q}(V)}^{p^{\prime}}+1\right) \lVert\nabla u\rVert_{L^{p}(V)}^{p} \right).
\end{align*}
Here we have used the Sobolev embedding \(W_{0}^{1,\,p}(V)\hookrightarrow L^{q^{\prime}}(V)\) and the Young inequality.
Hence \(\left\{u^{\epsilon}\right\}_{0<\epsilon\le 1}\subset u+W_{0}^{1,\,p}(V)\) is bounded. Assume that \(u^{\epsilon_{N}}\rightharpoonup v\in u+W_{0}^{1,\,p}(V)\) for some sequence \(\{\epsilon_{N}\}_{N=1}^{\infty}\subset (0,\,1)\) such that \(\epsilon_{N}\to 0\) as \(N\to\infty\). We note that
\begin{equation}\label{Convergence of minmizer}
F_{V}(u)\le F_{V}\left(u^{\epsilon}\right)\le F_{V}^{\epsilon}\left(u^{\epsilon}\right)\le F_{V}^{\epsilon}(u)\to F_{V}(u)\textrm{ as } \epsilon\to 0
\end{equation}
by (\ref{minimizer of Fv})-(\ref{minimizer of approximated Fv}) and (\ref{condition for approximated energy functional}). By Lemma \ref{lowersemicontinuity of F}, we have
\[F_{V}(u)\le F_{V}(v)\le \liminf_{N\to\infty}F_{V} \left(u^{\epsilon_{N}}\right)=F_{V}(u),\]
which implies \(v=u\). Hence we obtain \(u_{\epsilon}\rightharpoonup u\,(\epsilon\to 0)\) in \(W^{1,\,p}(V)\). Again by (\ref{Convergence of minmizer}), we conclude (\ref{convergence of minimizer}).

By \cite[Proposition 3.32]{MR2759829} and the compact embedding \(W^{1,\,p}(V)\hookrightarrow L^{p}(V)\), we are reduced to showing that 
\[\limsup\limits_{\epsilon\to 0}\left\lVert\nabla u^{\epsilon}\right\rVert_{L^{p}(V)}\le\lVert\nabla u\rVert_{L^{p}(V)}\quad\textrm{up to a subsequence}\]
to complete the proof. By (\ref{elliptic condition for relaxed p-th growth term}), we can check that a smooth functional \({\hat E^{\epsilon}}(z)\coloneqq E^{\epsilon}(z)-C_{\ast}\left(\epsilon^{2}+\lvert z\rvert^{2}\right)^{p/2}\) is convex in \({\mathbb R}^{n}\) for sufficiently small \(C_{\ast}=C_{\ast}(c_{1},\,p)>0\). By (\ref{convergence condition of p-th growth term}), \({\hat E}(z)\coloneqq \lim\limits_{\epsilon\to 0}{\hat E^{\epsilon}}(z)=\beta\lvert z\rvert+E_{p}(z)-C_{\ast}\lvert z\rvert^{p}\) is also convex in \({\mathbb R}^{n}\). We note that 
\[C_{\ast}\left(\int_{V}\left(\epsilon^{2}+\left\lvert\nabla u\right\rvert^{2}\right)^{p/2}\,dx-\int_{V}\left(\epsilon^{2}+\left\lvert\nabla u^{\epsilon}\right\rvert^{2}\right)^{p/2}\,dx\right)=\int_{V}\left({\hat E^{\epsilon}}\left(\nabla u^{\epsilon}\right)-{\hat E^{\epsilon}}(\nabla u)\right)\,dx+\int_{V}f\left(u-u^{\epsilon}\right)\,dx+\left[F_{V}^{\epsilon}(u)-F_{V}^{\epsilon}\left(u^{\epsilon}\right)\right]\]
by the definitions of \({\hat E^{\epsilon}}\) and \(F_{V}^{\epsilon}\). 
For \(z_{0}\in{\mathbb R}\), we define 
\[h(z_{0})\coloneqq \nabla_{z}E_{p}(z_{0})+\left(\beta-pC_{\ast}\lvert z_{0}\rvert^{p-1}\right)\sgn(z_{0})\in{\mathbb R}^{n}, \quad \textrm{ where } \sgn(z_{0})\coloneqq \left\{\begin{array}{cc}
\displaystyle\frac{z_{0}}{\lvert z_{0}\rvert} & (z_{0}\not=0),\\ 
0 & (z_{0}=0).
\end{array} \right.\]
It is easy to check that \(h(z_{0})\in\partial {\hat E}(z_{0})\) for all \(z_{0}\in{\mathbb R}^{n}\). Moreover, we can check that for \(z_{0}\in{\mathbb R}^{n}\) and \(0<\epsilon\le 1\),
\begin{equation}\label{convergence of subgradient vector}
\nabla_{z}{\hat E^{\epsilon}}(z_{0})=\nabla_{z}E_{p}^{\epsilon}(z_{0})+\beta\displaystyle\frac{\lvert z_{0}\rvert\sgn(z_{0})}{\sqrt{\epsilon^{2}+\lvert z_{0}\rvert^{2}}}-pC_{\ast}\left(\epsilon^{2}+\lvert z_{0}\rvert^{2}\right)^{p/2-1}\lvert z_{0}\rvert \sgn(z_{0})\to h(z_{0}) \,(\epsilon\to 0).
\end{equation}
Since \(\partial {\hat E^{\epsilon}}(z_{0})=\left\{\nabla_{z}{\hat E^{\epsilon}}(z_{0}) \right\}\) for all \(z_{0}\in{\mathbb R}\), we have
\[{\hat E^{\epsilon}}\left(\nabla u^{\epsilon}\right)-{\hat E^{\epsilon}}(\nabla u)\ge \left\langle\nabla_{z}{\hat E^{\epsilon}}(\nabla u)\mathrel{}\middle|\mathrel{}\nabla u^{\epsilon}-\nabla u\right\rangle=\left\langle \nabla_{z}{\hat E^{\epsilon}}(\nabla u)-h(\nabla u)\mathrel{}\middle|\mathrel{}\nabla \left(u^{\epsilon}-u \right) \right\rangle+\left\langle h(\nabla u)\mathrel{}\middle|\mathrel{}\nabla \left(u^{\epsilon}-u \right) \right\rangle\quad \textrm{ a.e. in }V.\]
Hence we obtain
\begin{align}\label{perturbation estimate}
C_{\ast}\int_{V}\left(\epsilon^{2}+\lvert\nabla u\rvert^{2} \right)^{p/2}\,dx&\ge C_{\ast}\int_{V}\left\lvert\nabla u^{\epsilon}\right\rvert^{p}\,dx+\underbrace{\int_{V}f\left(u-u^{\epsilon}\right)\,dx}_{\coloneqq I_{1}(\epsilon)}+\underbrace{\left[F_{V}^{\epsilon}(u)-F_{V}^{\epsilon}\left(u^{\epsilon}\right)\right]}_{\eqqcolon I_{2}(\epsilon)}\nonumber\\&\quad +\underbrace{\int_{V}\left\langle \nabla_{z}{\hat E^{\epsilon}}(\nabla u)-h(\nabla u)\mathrel{}\middle|\mathrel{}\nabla \left(u^{\epsilon}-u \right) \right\rangle\,dx}_{\eqqcolon I_{3}(\epsilon)}+\underbrace{\int_{V}\left\langle h(\nabla u)\mathrel{}\middle|\mathrel{}\nabla \left(u^{\epsilon}-u \right) \right\rangle\,dx}_{\eqqcolon I_{4}(\epsilon)}.
\end{align}
We claim that, up to a subsequence,
\begin{equation}\label{claim for norm convergence}
\lim\limits_{\epsilon\to 0}I_{k}(\epsilon)=0\quad \textrm{ for all }k\in\{\,1,\,2,\,3,\,4\,\}.
\end{equation}
\(I_{1}(\epsilon)\to 0\,(\epsilon\to 0)\) up to a subsequence is clear by the compact embedding \(W^{1,\,p}(V)\hookrightarrow L^{q^{\prime}}(V)\). \(I_{2}(\epsilon)\to 0\,(\epsilon\to 0)\) follows from (\ref{convergence of minimizer}). \(h(\nabla u)\in L^{p^{\prime}}\left(V,\,{\mathbb R}^{n}\right)\) is clear by (\ref{growth estimate of nabla Ep}). Since \(\nabla u^{\epsilon}\rightharpoonup \nabla u\,(\epsilon\to 0)\) in \(L^{p}\left(V,\,{\mathbb R}^{n}\right)\), we obtain \(I_{4}(\epsilon)\to 0\,(\epsilon\to 0)\). From (\ref{convergence of subgradient vector}), it is clear that
\(\nabla_{z}{\hat E^{\epsilon}}(\nabla u)\to h(\nabla u) \,(\epsilon\to 0)\) a.e. in \(V\). By (\ref{uniformly bound estimate approximated Ep and nabla Ep at 0}) and (\ref{growth estimate of nabla Ep})-(\ref{growth estimate of nabla aproximated Ep}), we get
\begin{align*}
\left\lvert \nabla_{z}{\hat E^{\epsilon}}(\nabla u)-h(\nabla u)\right\rvert&\le C(p,\,C_{\ast})\left(\left\lvert\nabla_{z} E_{p}^{\epsilon}(\nabla u)-\nabla_{z} E_{p}^{\epsilon}(0)\right\rvert^{p^{\prime}}+\left\lvert\nabla_{z} E_{p}^{\epsilon}(0)\right\rvert^{p^{\prime}}+ \left\lvert\nabla_{z} E_{p}(\nabla u)\right\rvert^{p^{\prime}}+\left(1+\lvert\nabla u\rvert^{2}\right)^{p^{\prime}(p-1)/2}+\beta^{p^{\prime}}\right)\\&\le C(p,\,\beta,\,c_{2},\,C_{\ast})\left(\lvert\nabla u\rvert^{p}+1\right)\in L^{1}(V)\quad\textrm{ a.e. in }V,
\end{align*}
uniformly for \(0<\epsilon\le\epsilon_{0}.\)
From these, we conclude that \(\nabla_{z}{\hat E^{\epsilon}}(\nabla u)\rightarrow h(\nabla u)\) in \(L^{p^{\prime}}\left(V,\,{\mathbb R}^{n}\right)\) by Lebesgue's dominated convergence theorem.
We note that
\[\sup_{0<\epsilon\le 1}\left\lVert\nabla u^{\epsilon}-\nabla u \right\rVert_{L^{p}(V)}<\infty,\]
since we have already checked that \(\left\{u^{\epsilon}\right\}_{0<\epsilon\le 1}\subset u+W_{0}^{1,\,p}(V)\) is bounded in \(W^{1,\,p}(V)\).
Hence by the H\"{o}lder inequality, we deduce that \(I_{3}(\epsilon)\to 0\,(\epsilon\to 0)\). 
From (\ref{perturbation estimate})-(\ref{claim for norm convergence}), by letting \(\epsilon\to 0\) we obtain
\[C_{\ast}\limsup_{\epsilon\to 0}\left\lVert \nabla u^{\epsilon}\right\rVert_{L^{p}(V)}^{p}=C_{\ast}\limsup_{\epsilon\to 0}\int_{V}\left\lvert\nabla u^{\epsilon}\right\rvert^{p}\,dx\le C_{\ast}\limsup_{\epsilon\to 0} \int_{V}\left(\epsilon^{2}+\lvert\nabla u\rvert^{2} \right)^{p/2}\,dx=C_{\ast}\lVert \nabla u\rVert_{L^{p}(V)}^{p}\]
up to a subsequence.
Here we have used Lebesgue's dominated convergence theorem for the last equality.
This completes the proof. 
\end{proof}
\subsection{A Fatou-type estimate}
Lemma \ref{weak+uniform bound} is used in the proof of main theorem in Section \ref{Subsect Local approximation}.
\begin{lemma}\label{weak+uniform bound}
Let \(\left(E, \lVert\,\cdot\,\rVert_{E}\right)\) be a Banach space and \(X\subset{\mathbb R}^{n}\) be a \({\mathcal L}^{n}\)-measurable set. Suppose that sequences \(\{u_{N}\}_{N=1}^{\infty}\subset L^{p}(X, E)\,(1\le p<\infty)\), \(\{C_{N}\}_{N=1}^{\infty}\subset \lbrack0,\,\infty)\) satisfy 
\begin{equation}\label{uniform ess bound}
\esssup\limits_{x\in X}\,\lVert u_{N}(x)\rVert_{E}\le C_{N}\quad \text{ for all } N,
\end{equation}
\begin{equation}\label{weak convergent}
u_{N}\rightharpoonup u \quad \text{\rm in } L^{p}(X,\,E) \quad \text{ as }N\to\infty
\end{equation}
for some \(u\in L^{p}(X,\, E)\). Then we have
\begin{equation}\label{ess bound}
\esssup\limits_{x\in X}\,\lVert u(x)\rVert_{E}\le \liminf\limits_{N\to\infty} C_{N}.
\end{equation}
\end{lemma}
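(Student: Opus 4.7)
The plan is to combine Mazur's lemma in the Bochner space \(L^p(X,E)\) with the standard extraction of an a.e.\ convergent subsequence from strong \(L^p\)-convergence. Set \(C\coloneqq \liminf_{N\to\infty}C_N\); the conclusion is trivial if \(C=+\infty\), so I may assume \(C<\infty\) and pass to a subsequence, still denoted \(\{u_N\}_{N=1}^{\infty}\), along which \(C_N\to C\). Weak convergence in \(L^p(X,E)\) is preserved under subsequences, so (\ref{weak convergent}) continues to hold.

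Next I would invoke Mazur's lemma in the Banach space \(L^p(X,E)\) to obtain convex combinations
\[
v_k = \sum_{j=k}^{M_k}\alpha_{k,j}\,u_j, \qquad \alpha_{k,j}\ge 0,\quad \sum_{j=k}^{M_k}\alpha_{k,j}=1,
\]
that converge strongly to \(u\) in \(L^p(X,E)\). For any fixed \(\varepsilon>0\), choose \(k_0\) so large that \(C_j\le C+\varepsilon\) for all \(j\ge k_0\). Then for every \(k\ge k_0\), the triangle inequality together with (\ref{uniform ess bound}) gives the pointwise estimate
\[
\lVert v_k(x)\rVert_{E}\le \sum_{j=k}^{M_k}\alpha_{k,j}\,\lVert u_j(x)\rVert_{E}\le \sum_{j=k}^{M_k}\alpha_{k,j}\,C_j\le C+\varepsilon \qquad \textrm{for a.e. } x\in X.
\]

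The remaining step is to upgrade the strong convergence \(v_k\to u\) in \(L^p(X,E)\) to a.e.\ convergence of \(\lVert v_k(x)-u(x)\rVert_E\to 0\) along a further subsequence. This reduces to the scalar case: the real-valued function \(g_k(x)\coloneqq \lVert v_k(x)-u(x)\rVert_E\) satisfies \(\lVert g_k\rVert_{L^p(X,\mathbb{R})}=\lVert v_k-u\rVert_{L^p(X,E)}\to 0\), so some subsequence \(g_{k_m}\to 0\) a.e. on \(X\). Combined with the bound from the previous paragraph, this yields \(\lVert u(x)\rVert_E\le C+\varepsilon\) for a.e. \(x\in X\), hence \(\esssup_{x\in X}\lVert u(x)\rVert_E\le C+\varepsilon\). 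Letting \(\varepsilon\to 0\) gives (\ref{ess bound}). I do not foresee any serious obstacle; the only delicate point is that Mazur's lemma is applied in the vector-valued Bochner space \(L^p(X,E)\) rather than a scalar \(L^p\), but this is valid in any Banach space, and the a.e.\ extraction argument reduces as above to the scalar case.
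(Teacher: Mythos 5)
Your proof is correct, but it takes a genuinely different route from the paper's. The paper argues directly on level sets: for $C_{\infty}\coloneqq\liminf_{N}C_{N}<\infty$ it defines the sets $X_{\epsilon,r}\coloneqq\{x\in X\mid \lvert x\rvert\le r,\ \lVert u(x)\rVert_{E}>C_{\infty}+\epsilon\}$, restricts the weak convergence to $X_{\epsilon,r}$, and uses weak lower semicontinuity of the $L^{p}(X_{\epsilon,r},E)$-norm together with a Chebyshev-type estimate to squeeze $(C_{\infty}+\epsilon)\mathcal{L}^{n}(X_{\epsilon,r})^{1/p}\le C_{\infty}\mathcal{L}^{n}(X_{\epsilon,r})^{1/p}$, forcing $\mathcal{L}^{n}(X_{\epsilon,r})=0$. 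You instead pass to a subsequence with $C_{N}\to C$, apply the tails form of Mazur's lemma in $L^{p}(X,E)$ to produce convex combinations $v_{k}\in\mathrm{conv}\{u_{j}:j\ge k\}$ converging strongly, observe that the essential bound $C+\varepsilon$ is preserved under convex combinations (after discarding a countable union of null sets), and extract an a.e.\ convergent subsequence via the scalar reduction $g_{k}(x)=\lVert v_{k}(x)-u(x)\rVert_{E}$. Both arguments are valid for a general Banach target $E$; the paper's approach is slightly more self-contained (only lower semicontinuity of the norm and a measure-theoretic truncation, no appeal to Mazur), while yours is a clean structural argument that upgrades weak to strong convergence and then works pointwise, which some readers may find more transparent. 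The one place your write-up glides over is that the a.e.\ inequality $\lVert u_{j}(x)\rVert_{E}\le C_{j}$ holds outside a null set $N_{j}$ depending on $j$, so one should note that the union $\bigcup_{j}N_{j}$ over the countably many indices appearing in the convex combinations is still null before asserting $\lVert v_{k}(x)\rVert_{E}\le C+\varepsilon$ for a.e.\ $x$ simultaneously in $k\ge k_{0}$; this is immediate but worth stating.
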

\begin{proof}
We may assume that \(C_{\infty}\coloneqq\liminf\limits_{N\to\infty}C_{N}<\infty\), since otherwise (\ref{ess bound}) is clear. 
Since \({\mathcal L}^{n}\) is \(\sigma\)-finite, it suffices to show that \({\mathcal L}^{n}(X_{\epsilon,\, r})=0\) for all \(\epsilon, r >0\), where \[X_{\epsilon,\, r}\coloneqq\{x\in X\mid \lvert x\rvert\le r\textrm{ and } \lVert u(x)\rVert_{E}>C_{\infty}+\epsilon\}\subset X\] is a \({\mathcal L}^{n}\)-measurable set. By (\ref{weak convergent}), it is clear that
\[u_{N}\rightharpoonup u \quad \textrm{ in } L^{p}(Y,\,E) \, (N\to\infty)\]
for any fixed \({\mathcal L}^{n}\)-measurable set \(Y\subset X\). Hence for each fixed \(\epsilon,\,r>0\), we obtain 
\begin{align*}
(C_{\infty}+\epsilon)\left({\mathcal L}^{n}(X_{\epsilon,\,r})\right)^{1/p}
&\le \left(\int_{X_{\epsilon,\,r}}\lVert u(x)\rVert_{E}^{p}\,dx\right)^{1/p} \quad \left(\text{by the definiton of } X_{\epsilon,\,r}\right)\\
&\le\liminf\limits_{N\to\infty}\left(\int_{X_{\epsilon,\,r}}\lVert u_{N}(x)\rVert_{E}^{p}\,dx\right)^{1/p} (\textrm{since the norm map is weakly lower semi-continuous})\\
&\le \left({\mathcal L}^{n}(X_{\epsilon,\,r})\right)^{1/p}\liminf\limits_{N\to\infty}C_{N}=C_{\infty}\left({\mathcal L}^{n}(X_{\epsilon,\,r})\right)^{1/p}\quad \left(\textrm{by (\ref{uniform ess bound})}\right).
\end{align*}
Since \({\mathcal L}^{n}(X_{\epsilon,\,r})\le {\mathcal L}^{n}\left(\left\{x\in {\mathbb R}^{n}\mathrel{}\middle|\mathrel{} \lvert x\rvert\le r\right\}\right)<\infty\), this implies \({\mathcal L}^{n}(X_{\epsilon,\,r})=0\), which completes the proof of (\ref{ess bound}).
\end{proof}

\end{document}